\newcommand{\Rmnum}[1]{\expandafter\@slowromancap\romannumeral #1@}
\newtheorem{definition}{Definition}[section]
\newtheorem{theorem}{Theorem}[section]
\newtheorem{lemma}{Lemma}[section]
\newtheorem{proposition}{Proposition}[section]
\newtheorem{remark}{Remark}[section]
			\newcommand{\N}{{\mathbb N}}
			\newcommand{\R}{{\mathbb R}}
			\newcommand{\bean}{\begin{eqnarray*}}
				\newcommand{\eean}{\end{eqnarray*}}
			\newcommand{\abs}[1]{\left\lvert#1\right\rvert}
\numberwithin{equation}{section}
\begin{document}
\theoremstyle{plain}			
\title{\bf On the stability of critical points of the Hardy-Littlewood-Sobolev inequality\thanks{Supported by NSFC(No.12171265,11771234,12026227).
E-mail addresses: liuk20@mails.tsinghua.edu.cn (K. Liu), zhangqian9115@mail.tsinghua.edu.cn (Q. Zhang), zou-wm@mail.tsinghua.edu.cn (W. M. Zou)} }		
\author{
{\bf Kuan Liu, Qian Zhang and Wenming Zou }\\
\date{}
\footnotesize
\it  Department of Mathematical Sciences, Tsinghua University, Beijing 100084, China.\\ }							
\maketitle
				
\begin{center}
\begin{minipage}{120mm}
\begin{center}{\bf Abstract}\end{center}		
This paper is concerned with the quantitative stability of critical points of the Hardy-Littlewood-Sobolev inequality. Namely, we give quantitative estimates for the  Choquard equation:
$$-\Delta u=(I_{\mu}\ast|u|^{2_\mu^*}) u^{2_\mu^*-1}\ \ \text{in}\ \ \R^N,$$
where $u>0,\ N\geq 3,\ \mu\in(0,N)$, $I_{\mu}$  is the Riesz potential and $2_\mu^* \coloneqq \frac{2N-\mu}{N-2}$ is the upper Hardy-Littlewood-Sobolev critical exponent. The Struwe's decomposition (see M. Struwe: Math Z.,1984) showed that the equation $\Delta u + u^{\frac{N+2}{N-2 }}=0$ has phenomenon of ``stable up to bubbling'', that is, if $u\geq0$ and $\|\Delta u+u^{\frac{N+2}{N-2}}\|_{(\mathcal{D}^{1,2})^{-1}}$ approaches zero, then $d(u)$ goes to zero, where $d(u)$ denotes the $\mathcal{D}^{1,2}(\R^N)$-distance between $u$ and the set of all sums of Talenti bubbles. Ciraolo, F{}igalli and Maggi (Int. Math. Res. Not.,2017) obtained the f{}irst quantitative version of Struwe's decomposition with single bubble in all dimensions $N\geq 3$, i.e, $\displaystyle d(u)\leq C\|\Delta u+u^{\frac{N+2}{N-2}}\|_{L^{\frac{2N}{N+2}}}.$ For multiple bubbles, F{}igalli and Glaudo (Arch. Rational Mech. Anal., 2020) obtained quantitative estimates depending  on the dimension, namely $$ d(u)\leq C\|\Delta u+u^{\frac{N+2}{N-2}}\|_{(\mathcal{D}^{1,2})^{-1}},  \hbox{ where } 3\leq N\leq 5,$$
which is invalid as $N\geq 6.$
 \vskip0.1in
 \quad In this paper, we prove the quantitative estimate of the Hardy-Littlewood-Sobolev inequality, we get  $$d(u)\leq C\|\Delta u +(I_{\mu}\ast|u|^{2_\mu^*})|u|^{2_\mu^*-2}u\|_{(\mathcal{D}^{1,2})^{-1}},  \hbox{ when }  N=3  \hbox{ and } 5/2< \mu<3.$$

\vskip0.13in
{\bf Key words:} Critical Choquard equation; Hardy-Littlewood-Sobolev inequality; Stability; Quantitative estimates;
Critical point setting.
\vskip0.13in
{\bf 2020 Mathematics Subject Classif{}ication: 35A23, 26D10, 35B35, 35J20, 35J60}
\vskip0.1in					
\end{minipage}
\end{center}
\vskip0.18in
\section{Introduction}
The classical   Sobolev inequality (\cite{Aubin,Talenti}) states that for any $N\ge 2$ and $p\in(1,N)$,  there exists a sharp constant $S(N,p)$ such that
\begin{equation}\label{eq:1.1}
    \|\nabla u\|_{L^p}\geq S(N,p) \|u\|_{L^{p^{\ast}}},\quad \forall u\in \dot{W}^{1,p}(\R^N),
\end{equation}
 where $p^*\coloneqq\frac{Np}{N-p}$, and the functional space
$\dot{W}^{1,p}(\R^N)\coloneqq \{u\in L^{p^{\ast}}(\R^N):\nabla u\in L^{p}(\R^N)\}$ equipped with the norm  $\|\nabla u\|_{L^{p}(\R^N)}$. The extremal functions constitute the $N+2$ dimensional manifold:
$$ \mathcal{M}\coloneqq \Big\{ v_{a,b,x_{0}}\coloneqq \frac{a}{(1+b|x-x_{0}|^{\frac{p}{p-1}})^{\frac{N-p}{p}}}: a\in \R\setminus\{0\},\  b>0,\ x_{0}\in \mathbb{R}^N\Big\}.$$
In particular, when $N\geq 3$ and $p=2$, then there exists a sharp constant $S=S(N)$ such that
\begin{equation}\label{eq:1.2}
\|\nabla u\|_{L^2}\geq S\|u\|_{L^{2^{\ast}}}
\end{equation}
for any $u\in\mathcal{D}^{1,2}(\R^N)\coloneqq \dot{W}^{1,2}(\R^N)$, where $2^*\coloneqq\frac{2N}{N-2}$ and the sharp constant $S$ is characterised by
\begin{equation}\label{eq:1.3}
    S=\inf_{u\in\mathcal{D}^{1,2}
(\R^N)\setminus\{0\}}\frac{\|\nabla u\|_{L^{2}}}{\|u\|_{L^{2^{\ast}}}},
\end{equation}
and the extremal functions of \eqref{eq:1.3} are
\begin{equation}\label{eq:1.4}
  U[z,\lambda](x)\coloneqq(N(N-2 ))^{\frac{N-2 }4}
  \frac{\lambda^{\frac{N-2 }2}}{(1+\lambda^2\abs{x-z}^2)^{\frac{N-2 }2}} ,\quad  \lambda>0,\  z\in \R^N,
  \end{equation}
which is called Aubin-Talenti bubbles or Talenti bubbles in literature.  It is well-known that the Euler-Lagrange equation
of \eqref{eq:1.3}, i.e., critical points of the Sobolev inequality \eqref{eq:1.2}, up to scaling, is given by
\begin{equation}\label{eq:1.5}
  -\Delta u=|u|^{2^*-2}u\ \ \text{in}\ \ \R^N.
\end{equation}
According to Gidas, Ni and Nirenberg \cite{sun18}, Caf{}farelli, Gidas and Spruck \cite{sun6}, Chen, Li \cite{chenduke}, it is
known that all the positive solutions are Talenti bubbles $U[z,\lambda]$, where $z\in\mathbb{R}^{N}$, $\lambda>0$.
\vskip0.08in
Since 1980s, there are growing interests in the stability of the  geometric and functional inequality, such as \eqref{eq:1.2}. There are mainly two directions about this topic, one is the geometric and functional inequality setting and the other one  is the critical point  setting. In the geometric and functional inequality setting, one is concerned with whether the def{}icit functional, which is def{}ined as the discrepancy of two sides in the given inequality, can control the appropriate distance between the extremal functions. For example, def{}ine the $p$-Sobolev  def{}icit of the inequality \eqref{eq:1.1}:
$$\delta (u)\coloneqq \|\nabla u\|_{L^p}- S(N,p) \|u\|_{L^{p^{\ast}}},
$$
Lions \cite{lion-compactness} proved that if $\delta(u)\to 0$, then $u$ is close to the manifold $\mathcal{M}$ in the sense of $\mathcal{D}^{1,2}$-distance. Br\'ezis and Lieb  in \cite{Brezis-Lieb} f{}irstly raised    such     questions  about the  quantitative stability for the classical Sobolev inequality \eqref{eq:1.1}, i.e., whether the functional $\delta(u)$ can be bounded from below by the $\mathcal{D}^{1,2}$-distance between  $u$  and  $\mathcal{M}$. Later, in the case $p=2$, Bianchi and Egnell \cite{bianchi1991} gave an optimal
quantitative estimate near the minimizers, i.e.,
$$ \inf_{z\in\R^N,\lambda>0,\alpha\in\R}\|\nabla(u-\alpha U[z,\lambda])\|_{L^2}^2\leq C(N)
(\|\nabla u\|_{L^2}^2-S^2\|u\|^2_{L^{2^*}}),$$
where $S$ is the constant in \eqref{eq:1.2}. This result is sharp in two aspects: the f{}irst one is that the $\mathcal{D}^{1,2}(\R^N)$-distance in the left hand side is the strongest, and the second one is that the quadratic exponent in the left hand side  is sharp, which cannot be replaced by a smaller one. After that, whether such result is true for $p\neq 2$ is an open question for a long time. After almost twenty years later, Cianchi, Fusco, Maggi and Pratelli \cite{cianchi-fusco-maggi-pratelli} extend this result to general $p\in (1,N)$ by using  symmertrization argument and optimal transport theory,  they proved that the $p$-Sobolev def{}icit can control some weaker distance, which uses $L^{p^{\ast}}$-norm of functions instead of $L^{p}$-norm of gradients, i.e.,
 $$
\inf_{ a\in \R\setminus\{0\}, b>0, x_{0}\in\mathbb{R}^N } \Big(\frac{\|u-v_{a,b,x_{0}}\|_{L^{p^{\ast}}}}{\|u\|_{L^{p^{ \ast}}}}\Big)^{\zeta}\leq C \ \frac{\!\!\!\delta(u)}{\ \ \|u\|_{L^{p^{\ast}}}},
 $$
where the exponent $\zeta=C_{CFMP}\coloneqq p^{\ast}(3+4p-\frac{3p+1}{N})$, which is most likely not sharp. However the strongest distance is
\begin{equation}\label{eq:1.6}
\inf_{ a\in \R\setminus\{0\}, b>0, x_{0}\in\mathbb{R}^N } \Big(\frac{\|\nabla u-\nabla v_{a,b,x_{0}}\|_{L^{p}}}{\|\nabla u\|_{L^{p}}}\Big)^{\zeta},
\end{equation}
thus the remained open problem is whether the quantitative stability result valid for the distance \eqref{eq:1.6}. Along this direction, Figalli and Neumayer \cite{figalli-Neumayer} f{}irstly  answered  this question in the case $p\geq 2$, they showed that there exists $C=C(N,p)$ such that
\begin{equation}\label{eq:1.7}
   \inf_{ a\in \R\setminus\{0\}, b>0, x_{0}\in\mathbb{R}^N } \Big(\ \frac{\|\nabla u-\nabla v_{a,b,x_{0}}\|_{L^{p}}}{\|\nabla u\|_{L^{p}}}\Big)^{\zeta}\leq C \ \frac{\!\!\!\delta(u)}{\ \ \|u\|_{L^{p^{\ast}}}},
\end{equation}
where $\zeta = p\,C_{CFMP}$. Later, by combining Clarkson's inequalities and the result of \cite{cianchi-fusco-maggi-pratelli}, Neumayer \cite{neumayer3} extended the above quantitative stability estimate \eqref{eq:1.7} to full range of $p\in(1,N)$, where $\zeta = \frac{p}{p-1} C_{CFMP}$ if $p\in(1,2)$, and $\zeta = p\,C_{CFMP}$ if $p\in[2,N)$. However since her results are based on the result of \cite{cianchi-fusco-maggi-pratelli}, so the exponent is not sharp.
Recently, Figalli and Zhang \cite{figalli-Zhang1} gave a f{}inal answer about this sharp stability exponent, they proved \eqref{eq:1.7} in the full range of $p\in(1,N)$ with sharp exponent $\zeta=\max\{2,p\}$. The result about the case $p=1$ was shown by Figalli, Maggi and Pratelli in \cite{figalli-maggi-Pratelli-2013}.
\vskip0.18in
Many results have been obtained in the geometric and functional inequality setting, for example, isoperimetric inequalities \cite{fusco-maggi-pratelli,figalli-Maggi-Pratelli-iso,maggi,neumayer2,cicalese-Leonardi,neumayer2,figalli-fusco,howard}, Brunn-Minkowski inequality \cite{figalli-maggi-Pratelli-BM,figalli-Jerison}, Sobolev-type inequality \cite{bartsch,chen-frank,dolbeault-Toscani,seuffert,Frank}, Hardy-Littlewood-Sobolev inequality \cite{figalli-Carlen,carlen1}, Gagliardo-Nirenberg-Sobolev inequalities \cite{figalli-Carlen,dolbeault-Toscani,ruffini,bonforte1,bonforte2}, Wulf{}f inequality  \cite{neumayer1,figalli-Zhang2}. As for the stability with  Yamabe metrics, we refer the readers to \cite{Engelstein} and references therein.
\vskip0.18in
Another remarkable and more challenging topic  is the critical point setting, namely for the corresponding Euler-Lagrange equation of a given inequality, if $u$ is an
approximate (in the sense of Palais-Smale sequence) solution of the equation, then whether $u$ is close (in some sense) to a manifold spanned by special solutions (e.g. Talenti bubbles). For the corresponding equation \eqref{eq:1.5} of the classical Sobolev inequality \eqref{eq:1.2}, a pioneering work by Struwe \cite{struwe1984}, which usually called as global compactness result, showed that  this is always the case in the qualitative sense, at least for non-negative functions. He showed that if a non-negative function sequence $\{u_k\}_{k\in\N}\subseteq \mathcal{D}^{1,2}(\mathbb{R}^N)$ satisf{}ies Dirichlet energy bound condition (which controls the number of multi-bubbles) and the Fr\'echlet derivatives tend to zero in dual space, i.e., there exists $\nu\in\N^{+}\coloneqq\{1,2,\cdots\}$, such that
$$\Big(\nu-\frac12\Big)S^N\le \int_{\R^N}\abs{\nabla u_k}^2\,dx \le \Big(\nu+\frac12\Big)S^N,$$
and $\big\|\Delta u_{k} + u_k^{2^{\ast}-1}\big\|_{(\mathcal{D}^{1,2})^{-1}} \to 0$ as $k\to\infty,$ then $\{u_{k}\}_{k\in\N}$ is  convergent to  a sum of Talenti bubbles in the energy norm, i.e., there exist a sequence $(z^{(k)}_1,\dots,z^{(k)}_\nu)_{k\in\N}$ of $\nu$-tuples of points in $\R^N$ and a sequence $(\lambda^{(k)}_1,\dots,\lambda^{(k)}_\nu)_{k\in\N}$ of $\nu$-tuples of positive real numbers
  such that
$$ \Big\|\nabla\big(u_k-\sum_{i=1}^\nu U[z_i^{(k)},\lambda_i^{(k)}]\big)
\Big\|_{L^2}\to 0\text{\quad as \quad} k\to\infty.$$ Equivalently, he gave a decomposition of  the Palais-Smale sequence. Usually, this result is  also called as the Struwe's decomposition in literature. Furthermore, Bahri-Coron \cite{bahri} established a more precise  asymptotic behavior of the above parameters $\{z_{i}^{(k)},\lambda_{i}^{(k)}\}$, they  showed
$$   \frac{\lambda^{(k)}_i}{\lambda^{(k)}_j}+\frac{\lambda^{(k)}_j}{\lambda^{(k)}_i}+\lambda^{(k)}_i\lambda^{(k)}_j |z^{(k)}_{i}-z^{(k)}_{j}|^{2}\to\infty
\text{\quad as \quad} k\to\infty \text{\quad for all \quad} i\neq j.$$

F{}igalli and his collaborators f{}irstly gave  a quantitative version of Struwe's decomposition \cite{struwe1984}.
Along this direction, Ciraolo, F{}igalli and Maggi
\cite{9}
obtained the f{}irst quantitative estimate $d(u)\lesssim\big\|\Delta u+u^{2^{\ast}-2}u\big\|_{L^{\frac{2N}{N+2}} }$
for all $N\geq3$ when $\nu=1$, i.e., there is only one bubble, here $d(u)$ denotes the $\mathcal{D}^{1,2}(\mathbb R^N)$-distance of $u$ from this bubble. Later F{}igalli and Glaudo \cite{figalli} established the linear estimate
\begin{equation}\label{eq:1.8}
  \Big\|\nabla u-\sum_{i=1}^{\nu}\nabla U_i\Big\|_{L^2}
\le C(N,\nu)\big\|\Delta u+\abs{u}^{2^*-2}u\big\|_{(\mathcal{D}^{1,2})^{-1}}
\end{equation}
for $3\leq N\leq 5$ and f{}inite number ($\nu\geq2$) of bubbles. Moreover, they constructed counter-examples for the case
$N\geq6$, $\nu\geq 2$, showing that the linear estimate \eqref{eq:1.8} does not hold.
 However,  recently, Deng, Sun and Wei in \cite{dsw} using f{}inite dimensional reduction method showed the right hand side of \eqref{eq:1.8} can be a nonlinear term, and deduced a sharp estimate in the case $N\geq6$.
 More precisely, they proved that
 $$
\big\|\nabla u-\sum_{i=1}^{\nu}\nabla U_i\big\|_{L^2}
\lesssim
\begin{cases}
\ \Upsilon(u) \big|\textup{log }\Upsilon(u)\big|^{\frac{1}{2}},\ \ &N=6,\\
\ \Upsilon(u)^{\frac{N+2}{2(N-2)}},\ \ &N\geq 7,
\end{cases}
 $$
where $\Upsilon(u)\coloneqq \big\|\Delta u+\abs{u}^{2^*-2}u\big\|_{(\mathcal{D}^{1,2})^{-1}}$.
Moreover, the exponent of the right hand side is sharp in the sense that the above estimate cannot hold for a bigger exponent. Recently, Wei and Wu \cite{Wu-Wei} established the stability for a special case of the  Caffarelli-Kohn-Nirenberg inequality in the critical point setting.

\vskip0.18in

Given these results, it is natural to ask what will happen   to the well known Hardy-Littlewood-Sobolev (HLS for short) inequality. Recall that the classical  Hardy-Littlewood-Sobolev inequality (\cite{hls9,hls10,hls15,021,hls18}) states that for $N\geq 1$, $\mu\in(0,N)$, there exists a sharp constant $C(N,\mu,s,t)>0$ such that
  \begin{equation}\label{eq:1.9}
   \Big|\int_{\mathbb{R}^N} (I_{\mu}\ast f) \,g\,\,dx\Big|\le C(N,\mu,s,t)\,
  ||f||_{L^s}||g||_{L^t},  \quad (f,g)\in L^s(\mathbb{R}^N)\times L^t(\mathbb{R}^N),
  \end{equation}
where $s,t\in(1,+\infty)$ satisf{}ies $\frac 1s +\frac 1t+ \frac{\mu}{N}=2$, and $I_{\mu}:\mathbb R^{N}\backslash\{0\}\rightarrow\mathbb R$ is the Riesz potential def{}ined by $$I_{\mu}(x)\coloneqq\frac{\mathcal{K}_{\mu}}{|x|^{\mu}}
 \ \ \hbox{with}\ \
 \ \mathcal{K}_{\mu}=\frac{\Gamma(\frac{\mu}{2})}{2^{N-\mu}\pi^{\frac{N}{2}}\Gamma(\frac{N-\mu}{2})},$$
 here $\Gamma$ denotes the classical Gamma function and the notation $\ast$  represents the convolution operator in the Euclidean space
  $\mathbb R^{N}$, i.e., $(I_{\mu}\ast f)(x)\coloneqq \int_{\mathbb R^{N}} I_{\mu}(x-y)f(y) \,dy$.
 \vskip0.08in
Lieb \cite{hls15} proved the existence of the extremal functions to inequality \eqref{eq:1.9} with sharp constant and computed the sharp constant in the case of $s=t$. Later, by using the rearrangement technique, Frank and Lieb \cite{dou11,dou12} reproved the sharp constant of the inequality \eqref{eq:1.9} for the case $s=t=\frac{2N}{2N-\mu}$, they obtained that
$$
 C(N,\mu,s,t)=C_{N,\mu}=\pi ^{\frac \mu 2}
 \frac{\Gamma (\frac{N- \mu}{2})}{\Gamma (N-\frac \mu 2)}
\left(\frac{\Gamma (\frac N2)}{\Gamma (N)}\right)^{-1+\frac \mu N}.
 $$
In particular, combining the inequality \eqref{eq:1.9} and Sobolev inequality \eqref{eq:1.2}, for any $N\ge 3$ and $u\in \mathcal{D}^{1,2}(\R^N)$, it holds
  \begin{equation}\label{eq:1.10}
  S_{eq:1.11}\bigg(\int_{\mathbb{R}^N}(I_{\mu}\ast
  |u|^{2_{\mu}^{\ast}})\,|u|^{2_{\mu}^{\ast}}\,dx\bigg)
  ^{\frac{1}{2_{\mu}^{\ast}}}\leq\|\nabla u\|_{L^2}^2,
  \end{equation}
where  $2_\mu^*\coloneqq\frac{2N-\mu}{N-2}$
 is the upper Hardy-Littlewood-Sobolev critical exponent, and $S_{eq:1.11}=S_{eq:1.11}(N,\mu)$ denotes the sharp constant def{}ined by
\begin{equation}\label{eq:1.11}
S_{eq:1.11}\coloneqq\inf_{u\in\mathcal{D}^{1,2}
(\R^N)\setminus\{0\}}\frac{\int_{\mathbb R^N}|\nabla u|^2\,dx}{\left(\int_{\mathbb{R}^N}(I_{\mu}\ast
  |u|^{2_{\mu}^{\ast}})\,|u|^{2_{\mu}^{\ast}}\,dx\right)
  ^{\frac{1}{2_{\mu}^{\ast}}}}.
  \end{equation}
In this paper, we also called \eqref{eq:1.10} as the  Hardy-Littlewood-Sobolev (HLS for short) inequality.
\vskip0.18in
It is well-known that the corresponding Euler-Lagrange equation of \eqref{eq:1.11}, i.e., critical points of the Hardy-Littlewood-Sobolev inequality \eqref{eq:1.10},  is given by
\begin{equation}\label{eq:1.12}
-\Delta u=  (I_{\mu}\ast|u|^{2_\mu^*} )|u|^{2_\mu^*-2}u\ \ \text{in}\ \ \R^N,
\end{equation}
where $N\geq3$ and $0<\mu<N$.
The associated energy functional of equation \eqref{eq:1.12} is
$$
    J(u)\coloneqq \frac12\int_{\R^N}\abs{\nabla u}^2\,dx-\frac{1}{2\cdot 2_{\mu}^{\ast}}\int_{\R^N}(I_{\mu}\ast|u|^{2_{\mu}^{\ast}})
|u|^{2_{\mu}^{\ast}}\,dx.
 $$
In recent years, there has been extensive studies about the equation:
\begin{equation}\label{eq:1.13}
-\Delta u+u=\left(I_{\mu}\ast
|u|^{q}\right)|u|^{q-2}u \ \
\hbox{in}\ \
\mathbb R^N,
\end{equation}
which is usually called as the nonlinear Choquard or Choquard-Pekar equation. Equation \eqref{eq:1.13} has several physical motivations, if $N=3,\ q=2$ and $\mu=1$, the problem
\begin{equation}\label{eq1.2}
-\Delta u+u=\left(I_{1}\ast|u|^{2}\right)u \ \
\hbox{in}\ \
\mathbb R^3
\end{equation}
already appeared as early as in 1954, describing the quantum mechanics of a polaron at rest,see
\cite{pekar}. In 1976, by using such an equation, Choquard described  an electron trapped in its own hole and  enriched the Hartree-Fock theory,  see \cite{lieb}. For more interesting existence and qualitative properties of solutions to \eqref{eq:1.13}, we refer the readers to
\cite{li,moroz,morozs,ms,mos} and references therein. Furthermore, by using the moving plane methods developed in Chen, Li and Ou \cite{gao8,gao9}, Guo, Hu, Peng and Shuai \cite{peng},  Du and Yang \cite{gao17} proved independently that when $0<\mu<N$ if $N=3 $ or $4$ and $0<\mu\leq4$ if $N\geq5$, then any positive solution of equation \eqref{eq:1.12} must assume the form of
\begin{equation}\label{eq:1.14}
\tilde{U}[z,\lambda](x)=c(\mu)U[z,\lambda](x),
\end{equation}
where $z\in\mathbb{R}^{N}$, $\lambda>0$ and
$$c(\mu)=\frac{1}{
\bigl(C_{N,\mu}\mathcal{K}_\mu S^{\frac{N-\mu}{2}}\bigr)^{\frac{N-2}{4+2(N-\mu)}}}.$$
It is worth mentioning  that the only dif{}ference  between  \eqref{eq:1.14} and \eqref{eq:1.4} is the constant $c(\mu)$. Therefore, we also call  $\tilde{U}$ as the Talenti bubbles in the current paper.
\vskip0.08in
To give a more quantitative description of the above parameters $\{z_{i}^{(k)},\lambda_{i}^{(k)}\}$, we will deal with  the so-called weakly-interacting family of Talenti bubbles which were def{}ined in \cite{figalli}. Roughly speaking, a family
$\{U[z_i,\lambda_i]\}_{i=1}^{\nu}$ is called weakly-interacting
if either the centers $z_i$ of the Talenti bubbles are very far from
the others, or their scaling factors $\lambda_i$ have very dif{}ferent magnitude. More precisely,

\begin{definition}\cite[Interaction of Talenti bubbles]{figalli}\label{def-delta-intracting} {\it Let $U_1\coloneqq U[z_1, \lambda_1],\dots,U_\nu\coloneqq U[z_\nu,\lambda_\nu]$  be a family of Talenti bubbles from \eqref{eq:1.4}.
The family $\{U_{i} \}_{i=1}^{\nu}$ is called $\delta$-interacting for some $\delta>0$ if{}f
\begin{equation}\label{eq:1.15}
\max_{i\neq j}\,\min\left(\frac{\lambda_i}{\lambda_j},\frac{\lambda_j}{\lambda_i},
\frac{1}{\lambda_i\lambda_j\abs{z_i-z_j}^2}\right)\le\delta.
\end{equation}
 For $\alpha_1,\dots,\alpha_\nu>0$, the family $\{(\alpha_{i},U_{i}) \}_{i=1}^{\nu}$  is called $\delta$-interacting if \eqref{eq:1.15}
holds and moreover
\begin{equation}\label{eq:1.16}
 \max_{1\le i\le \nu} \abs{\alpha_i-1} \le \delta.
\end{equation}
Similarly, let $U_1\coloneqq\tilde{U}[z_1, \lambda_1],\dots,U_\nu\coloneqq\tilde{U}[z_\nu,\lambda_\nu]$ be a family of Talenti bubbles from \eqref{eq:1.14}.
The family $\{U_{i} \}_{i=1}^{\nu}$ is called $\delta$-interacting for some $\delta>0$ if \eqref{eq:1.15} is satisf{}ied. For $\alpha_1,\dots,\alpha_\nu>0$, the family $\{(\alpha_{i},U_{i}) \}_{i=1}^{\nu}$  is called $\delta$-interacting if \eqref{eq:1.15} and \eqref{eq:1.16} hold.}
\end{definition}

\vskip0.08in
 It is worth noting that  the Struwe's type global compactness result has been established by
  Guo, Hu, Peng and Shuai in \cite{peng}, they proved that when $0<\mu<N$ if $N=3 $ or $4$ and $0<\mu\leq4$ if $N\geq5$, for a non-negative function sequence $\{u_k\}_{k\in\N}$,  if $J(u_{k}) \to c $ and $$\|J^{\prime}(u_{k})\|_{(\mathcal{D}^{1,2})^{-1}}=\|\Delta u_{k} +(I_\mu\ast |u_{k}|^{2_\mu^*})|u_{k}|^{2_{\mu}^{\ast}-2}u_{k} \|_{(\mathcal{D}^{1,2})^{-1}}\to 0$$ as $k\to+\infty$ (equivalently $\{u_k\}_{k\in\N}$ is a Palais-Smale sequence of $J$ at level $c$), then there exist  an integer $\nu\geq 1$, a sequence $(z^{(k)}_1,\dots,z^{(k)}_\nu)_{k\in\N}$ of $\nu$-tuples of points in $\R^N$ and a sequence $(\lambda^{(k)}_1,\dots,\lambda^{(k)}_\nu)_{k\in\N}$ of $\nu$-tuples of positive real numbers such that, up to a subsequences, there holds
$$
      \Big\|\,\nabla\big( u_{k}-\sum_{i=1}^{\nu}
      \tilde{U}[z_{i}^{(k)}, \lambda_{i}^{(k)}]\big)\,\Big\|_{L^{2}}\to0\text{\quad as \quad} k\to +\infty,
 $$
where $\tilde{U}[z_{i}^{(k)}, \lambda_{i}^{(k)}]$ has the form  of \eqref{eq:1.14}.
 \vskip0.108in

 A natural problem is that whether the analogue estimate as  \eqref{eq:1.8} is valid in the setting of the Hardy-Littlewood-Sobolev inequality. More precisely, our problem is that if $N\ge 3$, $\mu\in(0,N)$ and $\nu\ge 1$ is a positive integer, does there exist a constant $C=C(N,\mu,\nu)>0$ such that the linear bound
 $$
    \inf_{\substack{ (z'_i)_{1\le i\le\nu}\subseteq\R^N\\
(\lambda_i')_{1\le i\le\nu}\subseteq (0,\infty)}}
      \Big\|\nabla \big(u - \sum_{i=1}^{\nu}\tilde{U}[z_i',\lambda_i']\big)\Big\|_{L^2}
      \le C \big\|\Delta u +(I_\mu\ast |u|^{2_\mu^*})|u|^{2_{\mu}^{\ast}-2}u\big\|_{(\mathcal{D}^{1,2})^{-1}}
 $$
  holds for $\delta$-interacting family of Talenti bubbles from \eqref{eq:1.14}? We have the following result.
  \begin{theorem}\label{thm:main_close}
   Let  $N=3$, $5/2<\mu<3$ and $\nu\in\N^{+}$, there exist a small constant $\delta=\delta(N,\mu,\nu)>0$ and
    a large constant $C=C(N,\mu,\nu)>0$ such that the following statement holds.
    Let $u\in \mathcal{D}^{1,2}(\R^N)$ be a function such that
    \begin{equation}\label{eq:1.17}
     \Big\|u-\sum_{i=1}^{\nu}\hat{U}_i\Big\|_{\mathcal{D}^{1,2}(\R^N)} \le \delta,
     \end{equation} where $\{\hat U_{i} \}_{i=1}^{\nu}$ is a $\delta$-interacting family of Talenti bubbles from \eqref{eq:1.14}. Then there exist $\nu$ Talenti bubbles $\tilde{U}_1,\tilde{U}_2,\dots, \tilde{U}_\nu$ from \eqref{eq:1.14}, such that
     \begin{equation}\label{eq:1.18}
         \Big\|\nabla u-\sum_{i=1}^{\nu}\nabla \tilde{U}_i\Big\|_{L^2}
  \le C\big\|\Delta u+(I_{\mu}\ast|u|^{2_\mu^*})|u|^{2_\mu^*-2}u
  \big\|_{(\mathcal{D}^{1,2})^{-1}}.
     \end{equation}
 Furthermore, for any $i\not=j$, the interaction between the bubbles can be
 estimated as
 \begin{equation}\label{eq:1.19}
 \int_{\R^N}
 \tilde{U}_i^{p}\tilde{U}_j\,dx
 \le C\big\|\Delta u + (I_\mu\ast |u|^{2_\mu^*})|u|^{2_\mu^*-2}u\big\|_{(\mathcal{D}^{1,2})^{-1}}.
 \end{equation}
 \end{theorem}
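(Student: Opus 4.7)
\emph{Plan.} I follow the blueprint of Figalli--Glaudo \cite{figalli}, adapted to the nonlocal Choquard nonlinearity; the specific window $N=3$, $5/2<\mu<3$ plays here the role that $3\le N\le 5$ played in \eqref{eq:1.8}, placing the relevant nonlocal cross-interaction integrals on the favorable side of a borderline threshold. I would begin by selecting $(\alpha_i,z_i,\lambda_i)_{i=1}^\nu$ that minimize the $\mathcal{D}^{1,2}$-distance of $u$ from the nonlinear manifold $\big\{\sum_i \alpha_i\tilde U[z_i,\lambda_i]\big\}$. Writing $\sigma:=\sum_i\alpha_i\tilde U_i$ and $\rho:=u-\sigma$, the first-order optimality conditions give the $\mathcal{D}^{1,2}$-orthogonality of $\rho$ to the tangent space at $\sigma$,
$$\int_{\R^3}\nabla\rho\cdot\nabla\tilde U_i\,dx = \int_{\R^3}\nabla\rho\cdot\nabla(\pa_{\la_i}\tilde U_i)\,dx = \int_{\R^3}\nabla\rho\cdot\nabla(\pa_{z_i}\tilde U_i)\,dx = 0.$$
Hypothesis \eqref{eq:1.17} forces the minimizer to satisfy $\|\nabla\rho\|_{L^2}\le C\delta$, $|\alpha_i-1|\le C\delta$, and ensures that $(\alpha_i,\tilde U_i)$ is still $C\delta$-interacting in the sense of Definition \ref{def-delta-intracting}.

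\textbf{Step 2: Linearization and coercivity.} Setting $f:=\dl u+(I_\mu*|u|^{2_\mu^*})|u|^{2_\mu^*-2}u$ and Taylor-expanding the nonlocal nonlinearity around $\sigma$, one obtains
$$f = \dl\sigma+(I_\mu*|\sigma|^{2_\mu^*})|\sigma|^{2_\mu^*-2}\sigma \;+\; \mathcal{L}_\sigma\rho \;+\; \mathcal{N}(\sigma,\rho),$$
where $\mathcal{L}_\sigma\rho=\dl\rho+2_\mu^*(I_\mu*(|\sigma|^{2_\mu^*-2}\sigma\rho))|\sigma|^{2_\mu^*-2}\sigma+(2_\mu^*-1)(I_\mu*|\sigma|^{2_\mu^*})|\sigma|^{2_\mu^*-2}\rho$ is the nonlocal linearization at $\sigma$ and $\mathcal{N}(\sigma,\rho)$ is the higher-order remainder. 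The linearized operator at a single bubble $\tilde U$ has a well-understood nontrivial kernel spanned exactly by the tangent directions used in Step~1, so that $-\langle\mathcal{L}_{\tilde U}\rho,\rho\rangle\ge c\|\nabla\rho\|_{L^2}^2$ on the $\mathcal{D}^{1,2}$-orthogonal complement. A standard gluing argument with a partition of unity at the characteristic scales $\lambda_i$ and the weak-interaction assumption \eqref{eq:1.15} promotes this single-bubble spectral gap to the multi-bubble operator $\mathcal{L}_\sigma$.

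\textbf{Step 3: Controlling the residual and closing.} Pairing the expansion with $\rho$ and invoking the coercivity from Step~2 yields
$$c\,\|\nabla\rho\|_{L^2}^2 \le \|f\|_{(\mathcal{D}^{1,2})^{-1}}\|\nabla\rho\|_{L^2}+\mathcal{R}_{\text{int}}\|\nabla\rho\|_{L^2}+\|\mathcal{N}(\sigma,\rho)\|_{(\mathcal{D}^{1,2})^{-1}}\|\nabla\rho\|_{L^2},$$
where $\mathcal{R}_{\text{int}}$ is the dual norm of the bubble-interaction defect $\dl\sigma+(I_\mu*|\sigma|^{2_\mu^*})|\sigma|^{2_\mu^*-2}\sigma$. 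The nonlinear remainder is controlled via HLS \eqref{eq:1.9}, Hölder and Sobolev embeddings, giving $\|\mathcal{N}(\sigma,\rho)\|_{(\mathcal{D}^{1,2})^{-1}}\lesssim \|\nabla\rho\|_{L^2}^{1+\gamma}$ for some $\gamma>0$ in the regime $N=3,\ 5/2<\mu<3$, and is absorbed for small $\delta$. Bounding $\mathcal{R}_{\text{int}}$ amounts to testing the interaction defect against $\rho$ and against the tangent directions; combined with the pointwise decay $\tilde U_i(x)\lesssim \lambda_i^{1/2}(1+\lambda_i|x-z_i|)^{-1}$ in $N=3$, this reduces matters to estimating cross-integrals such as $\int\tilde U_i^p\tilde U_j\,dx$ and their nonlocal analogues $\int(I_\mu*\tilde U_i^{2_\mu^*})\tilde U_j^{2_\mu^*-1}\tilde U_k\,dx$. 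In the window $5/2<\mu<3$ these interactions are quadratically small in the weak-interaction parameter, so they satisfy $\mathcal{R}_{\text{int}}\lesssim \|f\|_{(\mathcal{D}^{1,2})^{-1}}+(\text{quadratic})$ and can be bootstrapped into a linear bound. Substituting back gives \eqref{eq:1.18}; plugging the obtained bound on $\|\nabla\rho\|_{L^2}$ into the expansion of $f$ and projecting against localized cut-offs of $\tilde U_i$ then produces \eqref{eq:1.19}.

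\textbf{Main obstacle.} I expect the delicate core to be the combination of Step~2 and the interaction bookkeeping in Step~3. Every term in $\mathcal{L}_\sigma$ is itself a nonlocal operator involving $I_\mu$, so the scale-separated cut-off argument used in \cite{figalli} does not transfer cleanly: interactions "travel" across bubbles of very different scales through the convolution kernel. Establishing the multi-bubble spectral gap for $\mathcal{L}_\sigma$ and then showing that the resulting nonlocal cross-integrals are small enough to be linearly absorbed into $\|f\|_{(\mathcal{D}^{1,2})^{-1}}$ is the main analytic difficulty, and is precisely what pins the result to $N=3$ with $\mu$ close to $N$.
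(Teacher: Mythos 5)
Your outline reproduces the first step of the paper (minimize the $\mathcal{D}^{1,2}$-distance over $\sum_i\alpha_i\tilde U[z_i,\lambda_i]$, derive the orthogonality conditions, test the equation with $\rho$), but the two pillars on which the actual proof rests are missing, and the one you propose in their place is exactly the point you leave open. Your Step 2 asserts a coercivity (multi-bubble spectral gap) for the nonlocal linearized operator $\mathcal{L}_\sigma$, to be obtained by "standard gluing" from the single-bubble case; you then yourself flag that the gluing does not transfer because the Riesz convolution couples well-separated scales. The paper never proves, nor uses, such a spectral gap. Instead, after testing with $\rho$ it expands the nonlocal term into nineteen pieces $\mathcal{H}_1,\dots,\mathcal{H}_{19}$ and neutralizes the dangerous diagonal quadratic contributions, namely $\int(I_\mu\ast\tilde U_i^{\tilde p}\rho)\,\tilde U_i^{\tilde p}\rho\,dx$ inside $\mathcal{H}_{17}$ and $\int(I_\mu\ast\tilde U_i^{\tilde p+1})\,\tilde U_i^{\tilde p-1}\rho^2\,dx$ inside $\mathcal{H}_{18}$, by proving they are $o(\|\nabla\rho\|_{L^2}^2)$ (estimate \eqref{eq:4.16}), using only the orthogonality relation \eqref{eq:4.3}, the splitting $\rho=\rho^+-\rho^-$, and truncations at scales $R=\|\nabla\rho\|_{L^2}^{-\tau_1}$, $\Lambda=\|\nabla\rho\|_{L^2}^{1-\tau_3}$ with a careful choice of the parameters $\tau_1,\dots,\tau_4$. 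Without either this lemma or a proved multi-bubble coercivity, the constant $c$ in your Step 3 display is exactly what is in question, so the argument cannot be closed as written: there is a genuine hole at its center, not merely a technical detail.

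The second missing ingredient is the interaction estimate of Lemma \ref{prop:interaction_and_coef} (inequality \eqref{eq:3.12}), proved in Section 3 by testing the equation against $\tilde U\Phi$ and $\partial_\lambda\tilde U\Phi$ with the localization Lemma \ref{lem:localization}: it bounds $\int\tilde U_i^{p}\tilde U_j\,dx$ by $\hat\varepsilon\|\nabla\rho\|_{L^2}+\|\nabla\rho\|_{L^2}^{\min(\tilde p,2)}+\Theta(u)$, and it must be available \emph{before} the main estimate, because these interaction integrals occur inside $\mathcal{H}_{16}$--$\mathcal{H}_{19}$ and have to be absorbed there, not only invoked afterwards to get \eqref{eq:1.19}. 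Your heuristic that in the window $5/2<\mu<3$ the cross-interactions are "quadratically small in the weak-interaction parameter" is not the actual mechanism (for $N=3$ one has $\int\tilde U_i^{p}\tilde U_j\,dx\approx Q^{1/2}$, which is only $o(1)$), and you never identify where $N=3$, $\mu>5/2$ genuinely enters. In the paper this comes from explicit exponent arithmetic: the condition $p-2\tilde p>0$, i.e.\ $\mu>\frac{N+2}{2}$, is forced in the estimates of $\mathcal{D}_2$--$\mathcal{D}_4$ and $\mathcal{H}_{19}$, while the estimate of $\mathcal{H}_{18,1}^{(i)}$ requires $\tilde p-1\ge 1$, i.e.\ $\mu\le 6-N$, and the interval $\bigl(\frac{N+2}{2},\,6-N\bigr)$ is nonempty only for $N=3$, which yields precisely $5/2<\mu<3$. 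A complete proof must either reproduce this bookkeeping or supply a substitute for it; your proposal does neither.
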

\vskip0.08in
\begin{remark}\label{rmk1}
Theorem \ref{thm:main_close} gives a positive answer to our problem when $N=3$ and $\mu\in(5/2,3)$, and note that the convergence property (see \cite{landkof,gao17}) of the Riesz potential showed that $I_{\mu}\to \delta_{x}$ as $\mu\to N$, therefore the equation \eqref{eq:1.12} goes back to the equation \eqref{eq:1.5} as $\mu\to N$. Hence, our results extend (in the limit sense) the quantitative stability results of the classical Sobolev inequality in Figalli and Glaudo \cite{figalli} to the Hardy-Littlewood-Sobolev inequality in dimension $N=3$.
\end{remark}

\vskip0.08in
\begin{remark}\label{rmk2}
We cannot prove that Theorem \ref{thm:main_close} holds in other cases, such as $N=3$, $\mu\in(0,5/2]$ or $N\geq 4$, $\mu\in(0,N)$. It is worth noting that the classif{}ication result about the positive solutions of \eqref{eq:1.12} and the Struwe's type result have not yet been established in the case $N\geq 5$ and $\mu>4$.  In our proof, the setting of $\mathcal{D}^{1,2}$-distance, the using of H\"older inequality and the  estimation  of $\mathcal{H}_{1}\sim\mathcal{H}_{16}$, $\mathcal{H}_{19}$, $\mathcal{H}_{17}$ and $\mathcal{H}_{18}$ indicate the reasonable assumptions why the condition $N=3, \mu\in(5/2,3)$ are needed. However,  although the techniques developed in this paper do not provide any positive result in those cases,  we believe that the similar conclusions  as  Theorem \ref{thm:main_close} will hold. But we do not address this question here.
\end{remark}
\vskip0.18in

The organization of the paper is as follows.
After the notation and preliminaries in Section \ref{sec-notation}. In Section \ref{sec-intraction}, we prove the interaction
integral estimate, which play  key roles in the Section \ref{sec-quantitative}.
F{}inally, Section  \ref{sec-quantitative} is devoted
to the proof of Theorem \ref{thm:main_close}.

\newpage

\section{Notation and preliminaries}\label{sec-notation}
From now on, we use the following notations and conventions.
\begin{itemize}
    \item  We denote by $N$ the dimension of Euclidean space and $\nu$ the number of bubbles.
    \item For $R>0$, the notation $B(z,R)$ denotes the ball in $\mathbb{R}^{N}$ with center $z$ and radius $R$.
    \item For a nonempty subset $M\subseteq \mathbb{R}^{N}$, the characteristic function $\mathds{1}_{M}$ def{}ined by
$$
    \mathds{1}_{M}(x)\coloneqq\begin{cases}
        1 ,&\ \ x\in M ,\\
        0 ,&\ \ x\notin M.
    \end{cases}
 $$
    \item   For a real-valued function $f$, def{}ine $f^{\pm}\coloneqq\max\{\pm f,0\}$,  then $0\leq f^{\pm}\leq |f|$ and $f=f^{+}-f^{-}$.
    \item  For functions $f$ and $g$, the set $\{f\geq g\}\coloneqq\{x\in \mathbb{R}^{N}: f(x)\geq g(x)\}$.
    \item  Let $\|u\|_{L^r}$ denote  the usual norm of $L^r(\mathbb R^N)\ (r\geq1)$ and the working space is $$\mathcal{D}^{1,2}(\R^N)\coloneqq \{u\in L^{2^{\ast}}(\R^N):\nabla u\in L^{2}(\R^N)\}$$ equipped with norm $\|\nabla u\|_{L^2}$.
    \item   We denote $2^*\coloneqq\frac{2N}{N-2}$ and $2_\mu^*\coloneqq\frac{2N-\mu}{N-2}$, where $\mu\in(0,N)$ is a parameter.
    \item   From now on, f{}ix notation $p\coloneqq2^*-1=\frac{N+2}{N-2 }$ and $\tilde{p}\coloneqq2_\mu^*-1=\frac{N+2-\mu}{N-2 }$.
    \item   Given $q \in(1,\infty)$, we denote by $q^\prime\coloneqq\frac{q}{q-1}$ the H\"older conjugate of $q$.
    \item   We write that $a \lesssim b$ (or $a\gtrsim b$) if $a \leq cb$ (or $ca \geq b$), where $c$ is a positive constant depends only on the parameters $N$, $\mu$ and  $\nu$. Also, we say that $a\approx b$ if $a\lesssim b$ and $a \gtrsim b$. We write $a \lesssim_{s_{1},\cdots,s_{t}} b$ (or $a \gtrsim_{s_{1},\cdots,s_{t}} b$ or $a \approx_{s_{1},\cdots,s_{t}} b$) if in addition the constant also depends on the  parameters $s_{1},\cdots,s_{t}$.
    \item  We denote with $o(1)$ any expression that goes to zero when the parameter $\delta$ goes to zero; let $o(E)$ denote the quantity  that, when divided
  by the quantity $E$, is $o(1)$ as $\delta \to 0$. For simplicity we will not write out explicitly $\delta \to 0$  and  use  $o(1)$ or $o(E)$ directly.
    \item    The notation $\{f\}_{g}$ means that the term ``$f$'' appears only as the condition ``$g$'' is satisf{}ied.
\end{itemize}

\vskip0.08in
Setting $\tilde{U}=\tilde{U}[z,\lambda]$, by the classif{}ication results in \cite{peng,gao17}, the Talenti bubbles satisfy
\begin{equation}\label{eq:2.1}
-\Delta\tilde{U}
=(I_{\mu}\ast\tilde{U}^{\tilde{p}+1})\tilde{U}^{\tilde p}.
\end{equation}
Moreover, according to \eqref{eq:1.14} and the fact that $U$ solve \eqref{eq:1.5}, we know
$-\Delta\tilde{U}=c(\mu)(-\Delta U)
 =c(\mu) U^{p}.$
Combining the  above two equations, we have the signif{}icant pointwise relation:
\begin{equation}\label{eq:2.2}
    (I_{\mu}\ast\tilde{U}^{\tilde{p}+1})\tilde{U}^{\tilde{p}}=c(\mu) U^{p}= c(\mu)^{1-p}\tilde{U}^{p}  \textit{\quad in \ \  $\R^{N}$},
\end{equation}
namely
\begin{equation}\label{eq:2.3}
    I_{\mu}\ast\tilde{U}^{\tilde{p}+1}=c(\mu)^{1-p}\tilde{U}^{p-\tilde{p}}\approx \tilde{U}^{p-\tilde{p}}    \textit{\quad in \ \  $\R^{N}$},
\end{equation}
this relation will simplify our calculations and deduce some new estimations.  By taking the derivative of equation \eqref{eq:2.1}, we know that  $ \partial_\lambda\tilde{U}$ and $\nabla_z\tilde{U}$ satisfy
\begin{equation}\label{eq:2.4}
\aligned -\Delta(\partial_\lambda\tilde{U})
 & = (\tilde{p}+1)(I_{\mu}\ast\tilde{U}^{\tilde{p}}\partial_\lambda\tilde{U})\tilde{U}^{\tilde{p}}+\tilde{p}(I_{\mu}\ast\tilde{U}^{\tilde{p}+1})\tilde{U}^{\tilde{p}-1}\partial_\lambda\tilde{U},\endaligned
    \end{equation}
and
 \begin{equation}\label{eq:2.5}
  \aligned  -\Delta (\nabla_z \tilde{U})
 & =(\tilde{p}+1)(I_{\mu}\ast\tilde{U}^{\tilde{p}}\nabla_z\tilde{U})\tilde{U}^{\tilde{p}}+\tilde{p}(I_{\mu}\ast\tilde{U}^{\tilde{p}+1})\tilde{U}^{\tilde{p}-1}\nabla_z\tilde{U} .\endaligned
    \end{equation}
  F{}inally, it follows from \eqref{eq:1.14} that  the $\lambda$-derivative of the Talenti bubble is
$$
\aligned\partial_\lambda\tilde{U}(x)
&=c(\mu)
\frac{N-2 }{2\lambda}
\bigg(\dfrac{(N(N-2))^{\frac{N-2 }{4}}\lambda^{\frac{N-2 }{2}} }{(1+\lambda^2|x-z|^2)^{\frac{N-2}{2}}}\bigg)
\frac{1-\lambda^2|x-z|^2}{1+\lambda^2|x-z|^2}\\
&=c(\mu)\frac{N-2 }{2\lambda}
U(x)\left(\frac{1-\lambda^2|x-z|^2}{1+\lambda^2|x-z|^2} \right)=\frac{N-2 }{2\lambda} \tilde{U}(x)
  \left(\frac{1-\lambda^2\abs{x-z}^2}{1+\lambda^2\abs{x-z}^2}\right), \endaligned
 $$
then
\begin{equation}\label{eq:2.6}
    \Big|\partial_{\lambda}\big|_{\lambda=1} \tilde{U}\Big| \lesssim \tilde{U}.
\end{equation}
The gradient of $\partial_{\lambda} \tilde{U}$ is
 $$
    \aligned
    \nabla_{x}(\partial_{\lambda} \tilde{U})(x)&= \frac{N-2}{2\lambda} \nabla \tilde{U}(x)\left( \frac{1-\lambda^2|x-z|^2}{1+\lambda^2|x-z|^2} \right)+\frac{N-2}{2\lambda}  \tilde{U}(x)\nabla\left( \frac{1-\lambda^2|x-z|^2}{1+\lambda^2|x-z|^2} \right)   \\
    &=\frac{N-2}{2\lambda} \nabla \tilde{U}(x)\left( \frac{1-\lambda^2|x-z|^2}{1+\lambda^2|x-z|^2} \right)+\frac{N-2}{2\lambda}  \tilde{U}(x)\frac{-4\lambda^2 (x-z) }{(1+\lambda^2|x-z|^2)^2},
    \endaligned
 $$
then for $\lambda=1$, there holds
\begin{equation}\label{eq:2.7}
    |\nabla_{x}(\partial_{\lambda} \tilde{U})| \lesssim  |\nabla \tilde{U}|+ \tilde{U}^{1+\frac{2}{N-2}}\approx |\nabla \tilde{U}|+\tilde{U}^{\frac{p+1}{2}}.
\end{equation}
Given $\lambda>0$ and $z\in\R^N$, let $T_{z,\lambda}:C^\infty_c(\R^N)\to C^\infty_c(\R^N)$ be the operator
def{}ined as
\begin{equation}\label{eq:2.8}
    T_{z,\lambda}(\varphi)(x)\coloneqq\lambda^{\frac{N-2 }2}\varphi(\lambda(x-z)).
\end{equation}
The operator $T_{z,\lambda}$ satisf{}ies a multitude of properties:
\begin{itemize}
\item The operator $T_{z,\lambda}$ is invertible, and $T_{z,\lambda}^{-1}=T_{-\lambda z,\lambda^{-1}}$.
\item For any function $\varphi\in C^\infty_c(\R^N)$ it holds that
\begin{equation}\label{eq:2.9}
    \int_{\R^N} \abs{ T_{z,\lambda}(\varphi)}^{p+1}\,dx
= \int_{\R^N} \abs{\varphi}^{p+1}\,dx,\ \
    \int_{\R^N} \abs{\nabla T_{z,\lambda}(\varphi)}^2\,dx= \int_{\R^N} \abs{\nabla\varphi}^2\,dx.
\end{equation}
\item Given $k\in\N^{+}$, for any choice of positive exponents $(e_i)_{1\leq i \leq k}$ with
$e_1+\cdots+e_k=\tilde{p}+1$, and for any choice of non-negative functions $\varphi_1,\dots,\varphi_k\in C^\infty_c(\R^N)$, it holds that
\begin{equation}\label{eq:2.10}
    \aligned\ &\ \int_{\R^N}
 \big(I_\mu\ast\big(T_{z,\lambda}(\varphi_1)^{e_1}\cdots T_{z,\lambda}
 (\varphi_k)^{e_k}\big)\big)\,
 T_{z,\lambda}(\varphi_1)^{e_1}\cdots
 T_{z,\lambda}(\varphi_k)^{e_k}\,dx\\
 \ = &\   \lambda^{\frac{N-2}{2}(\tilde{p}+1)2}\int_{\R^N}\int_{\R^N}\frac{\varphi_1(\lambda(y-z))^{e_1}\cdots \varphi_k(\lambda(y-z))^{e_k} }{|x-y|^{\mu}} \varphi_1(\lambda(x-z))^{e_1}\cdots\varphi_k(\lambda(x-z))^{e_k}
 \,dy\,dx\\
 \ =&\ \lambda^{(N-2)(\tilde{p}+1)+\mu-2N}\int_{\R^N}\int_{\R^N} \frac{ \varphi_1(y)^{e_1}
\cdots\varphi_k(y)^{e_k}}{ |x-y|^{\mu}}\varphi_1(x)^{e_1}
\cdots\varphi_k(x)^{e_k}\,dy\,dx \\
 \ =&\ \int_{\R^N}\big(I_\mu\ast\big(\varphi_1^{e_1}
 \cdots\varphi_k^{e_k}\big)\big)\,\varphi_1^{e_1}
 \cdots\varphi_k^{e_k}\,dx.\endaligned
\end{equation}
Particularly, we observe that
 $$
\int_{\R^N}\big(I_\mu\ast
T_{z,\lambda}(\varphi)^{\tilde{p}+1}\big)
T_{z,\lambda}(\varphi)^{\tilde{p}+1}\,dx
=\int_{\R^N}(I_\mu\ast \varphi^{\tilde{p}+1})
\varphi^{\tilde{p}+1}\,dx
$$
for any $\varphi\in C^\infty_c(\R^N)$.
\end{itemize}
 \vskip0.06in
The transformations $T_{z,\lambda}$ play an important role in the study of the Hardy-Littlewood-Sobolev inequality, since the inequalities \eqref{eq:1.2}, \eqref{eq:1.10} and the equation \eqref{eq:1.12} are invariant under this transformation.
 \vskip0.08in
To prove our main results, we need the following two Lemmas, the proof can be found in the appendix B of F{}igalli and Glaudo \cite{figalli}.

\begin{lemma}\cite[Proposition B.2]{figalli}\label{prop:interaction_approx}
Given $N\ge 3$, let $\tilde{U}=\tilde{U}[z_1, \lambda_1]$ and $\tilde{V}=\tilde{U}[z_2, \lambda_2]$ be two bubbles from \eqref{eq:1.14} such that
$\lambda_1\ge\lambda_2$.
Let us def{}ine the quantity $Q=Q(z_1, \lambda_1, z_2,\lambda_2)$ as
\begin{equation}\label{eq:2.11}
    Q \coloneqq\min\bigg(\frac{\lambda_1}{\lambda_2},\frac{\lambda_2}{\lambda_1}, \frac1{\lambda_1\lambda_2 \abs{z_1-z_2}^2}\bigg)=\bigg(\frac{\lambda_2}{\lambda_1}, \frac1{\lambda_1\lambda_2 \abs{z_1-z_2}^2}\bigg).
\end{equation}
Then, for any f{}ixed $ \varepsilon>0$ and any non-negative exponents such
 that $\alpha+\beta=p+1$,
it holds
$$\int_{\R^N} \tilde{U}^\alpha \tilde{V}^\beta\,dx\approx_{N,\mu,\varepsilon}
\begin{cases}
 Q^{\frac{(N-2 )\min(\alpha, \beta)}2}&\text{if}\ \abs{\alpha-\beta}\ge\varepsilon,\\
 Q^{\frac N2}\log(\frac1Q)&\text{if }\ \  \alpha=\beta.
\end{cases}$$
In particular ,
\begin{equation}\label{eq:2.12}
    \int_{\R^N} \tilde{U}^{p} \tilde{V} \,dx\approx Q^{\frac{N-2}{2}}.
\end{equation}
\end{lemma}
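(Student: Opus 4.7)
Since $\alpha + \beta = p+1 = 2N/(N-2)$, a direct change of variables of the type appearing in \eqref{eq:2.10} shows that $\int_{\R^N} \tilde{U}[z_1,\lambda_1]^\alpha \tilde{U}[z_2,\lambda_2]^\beta\,dx$ depends only on $Q$. I would first translate so that $z_1=0$ and then rescale by $\lambda_1$, reducing the problem to estimating $\int_{\R^N} \tilde{U}[0,1]^\alpha\,\tilde{U}[z,\lambda]^\beta\,dx$, where $\lambda = \lambda_2/\lambda_1 \in (0,1]$ and $z \in \R^N$ is the (renormalized) separation vector. After this reduction one has $Q = \min(\lambda,\,1/(\lambda|z|^2))$, and the analysis naturally splits into two geometric regimes: the \emph{concentrated} case $\lambda|z|^2 \lesssim 1$ (so $Q=\lambda$), in which the two bubbles overlap on comparable scales, and the \emph{separated} case $\lambda|z|^2 \gg 1$ (so $Q = 1/(\lambda|z|^2)$), in which their bulks live on essentially disjoint regions.

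Next I would use the pointwise estimates $\tilde{U}[0,1](x) \approx (1+|x|)^{-(N-2)}$ and $\tilde{U}[z,\lambda](x) \approx \lambda^{(N-2)/2}(1+\lambda|x-z|)^{-(N-2)}$. In the concentrated case I would partition $\R^N$ into $\{|x|\le 1\}$, the annulus $\{1\le |x|\le 1/\lambda\}$, and $\{|x|\ge 1/\lambda\}$; on each piece each bubble is comparable to a simple power of $|x|$ or to a constant, reducing the $N$-dimensional integral to an explicit radial one. A direct computation, using the identity $(N-2)(\alpha+\beta)/2 = N$, shows that every region contributes a quantity of order $\lambda^{(N-2)\min(\alpha,\beta)/2}$, and that the intermediate annulus carries the borderline weight $r^{-1}$ precisely when $\alpha=\beta=N/(N-2)$, producing the claimed $\log(1/\lambda)$ factor. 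In the separated case I would instead split into the balls $B(0,|z|/2)$ and $B(z,|z|/2)$ together with their common exterior; on each of these balls one bubble is essentially frozen at a size determined by $|z|$, and the same kind of one-dimensional integrations give contributions of order $(\lambda|z|^2)^{-(N-2)\min(\alpha,\beta)/2}$, with the logarithmic correction again arising exactly in the borderline case.

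The main obstacle is the bookkeeping: across the several regions of both regimes one must keep track of which factor dominates and verify that the exponents assemble into $Q^{(N-2)\min(\alpha,\beta)/2}$ (respectively $Q^{N/2}\log(1/Q)$), with implicit constants depending only on $N$, $\mu$, and the separation parameter $\varepsilon$. Particular care is needed at the borderline $\alpha=\beta$, because any fixed gap $|\alpha-\beta|\ge \varepsilon$ still produces only a bounded constant, whereas the equality $\alpha=\beta$ produces a logarithm; and in the transition region $\lambda|z|^2 \approx 1$ one must check that the concentrated and separated estimates match up to multiplicative constants. The special case \eqref{eq:2.12} is then immediate by taking $\alpha=p$ and $\beta=1$, so that $\min(\alpha,\beta)=1$.
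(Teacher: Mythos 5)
Your plan is essentially the proof of Proposition B.2 in Figalli and Glaudo's Appendix B, which is precisely what the paper cites here without reproducing: conformal reduction via $T_{z,\lambda}$ to a two-parameter integral, pointwise bubble asymptotics, and a radial region-by-region computation driven by the identity $(N-2)(\alpha+\beta)/2 = N$. The only extra observation required to pass from the Sobolev bubbles $U$ treated in that reference to the present $\tilde U = c(\mu)U$ is that, since $\alpha+\beta=p+1$ is fixed, the integrand picks up only the constant factor $c(\mu)^{p+1}$; make that explicit, since it is the source of the $\mu$-dependence of the implicit constant. One small inaccuracy in your description to watch for when you carry out the bookkeeping: it is not true that every region contributes the same order $\lambda^{(N-2)\min(\alpha,\beta)/2}$ — for example, when $\alpha<\beta$ in the concentrated regime the inner ball $\{|x|\le 1\}$ contributes only $\lambda^{(N-2)\beta/2}$, which is strictly smaller. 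What is true is that the total over the regions has the claimed order, the lower bound coming from the dominant region, and the hypothesis $|\alpha-\beta|\ge\varepsilon$ serves to keep the ratio in the relevant geometric sums bounded away from $1$, which is why the constants depend on $\varepsilon$.
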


\begin{lemma}\cite[Corollary B.4]{figalli}\label{cor:interaction_integral_localized}
      Given $N\ge 3$ and two bubbles $\tilde{U}_1=\tilde{U}[z_1, \lambda_1]$ and
      $\tilde{U}_2=\tilde{U}[z_2, \lambda_2]$ with
      $\lambda_1\ge\lambda_2$, it holds
      $$
        \int_{\R^N} \tilde{U}_1^{p} \tilde{U}_2\,dx  \approx \int_{B(z_1, \lambda_1^{-1})}
        \tilde{U}_1^{p}\tilde{U}_2\,dx .$$
\end{lemma}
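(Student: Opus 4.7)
The inclusion $B(z_{1},\lambda_{1}^{-1})\subseteq\R^{N}$ gives the ``$\le$'' part of the claimed $\approx$, so the content of the lemma is the reverse bound: the tail integral over $\R^{N}\setminus B(z_{1},\lambda_{1}^{-1})$ is controlled by the local integral over $B(z_{1},\lambda_{1}^{-1})$. My plan is to show that both integrals are of order $Q^{(N-2)/2}$, where $Q$ is the quantity defined in Lemma~\ref{prop:interaction_approx}, and then conclude by comparison.

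\emph{Step~1 (normalization).} I would first exploit the scale invariance of the integrand. Under the change of variables $y=\lambda_{1}(x-z_{1})$ one computes $\tilde{U}[z_{1},\lambda_{1}](x)=\lambda_{1}^{(N-2)/2}\tilde{U}[0,1](y)$ and $\tilde{U}[z_{2},\lambda_{2}](x)=\lambda_{1}^{(N-2)/2}\tilde{U}[\tilde z,\tilde\lambda](y)$, where $\tilde z=\lambda_{1}(z_{2}-z_{1})$ and $\tilde\lambda=\lambda_{2}/\lambda_{1}\le 1$; combined with $dx=\lambda_{1}^{-N}\,dy$ this leaves $\tilde{U}_{1}^{p}\tilde{U}_{2}\,dx$ invariant, and maps $B(z_{1},\lambda_{1}^{-1})$ onto $B(0,1)$ (this is the content of the operator $T_{z_{1},\lambda_{1}}$ in \eqref{eq:2.8}). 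Hence I may assume $z_{1}=0$, $\lambda_{1}=1$, $\lambda_{2}\le 1$, and write $\tilde{U}_{2}=\tilde{U}[\tilde z,\tilde\lambda]$.

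\emph{Step~2 (local estimate).} Since $\tilde{U}[0,1]^{p}\approx 1$ on $B(0,1)$, the local integral reduces to $\int_{B(0,1)}\tilde{U}[\tilde z,\tilde\lambda]\,dy$. I would then split according to the dichotomy defining $Q=\min\bigl(\tilde\lambda,\,(\tilde\lambda|\tilde z|^{2})^{-1}\bigr)$. If $\tilde\lambda|\tilde z|\le 1$, then $\tilde\lambda|y-\tilde z|\lesssim 1$ on a subset of $B(0,1)$ of definite measure, so $\tilde{U}[\tilde z,\tilde\lambda](y)\approx\tilde\lambda^{(N-2)/2}$ there and the local integral is $\approx\tilde\lambda^{(N-2)/2}=Q^{(N-2)/2}$. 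If $\tilde\lambda|\tilde z|>1$, then $1+\tilde\lambda^{2}|y-\tilde z|^{2}\approx\tilde\lambda^{2}|\tilde z|^{2}$ throughout $B(0,1)$ (after handling separately the borderline subcase $|\tilde z|\approx 1$, where both sides are $\approx 1$), so $\tilde{U}[\tilde z,\tilde\lambda](y)\approx(\tilde\lambda|\tilde z|^{2})^{-(N-2)/2}$ on a region of definite measure, and the local integral is again $\approx Q^{(N-2)/2}$.

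\emph{Step~3 (conclusion and obstacle).} Lemma~\ref{prop:interaction_approx} (specifically \eqref{eq:2.12}) yields $\int_{\R^{N}}\tilde{U}_{1}^{p}\tilde{U}_{2}\,dx\approx Q^{(N-2)/2}$, which matches the local bound from Step~2 and closes the equivalence. The main obstacle I would expect lies in the pointwise case analysis of Step~2: one needs two-sided bounds on $\tilde{U}[\tilde z,\tilde\lambda]$ that are uniform on a set of definite measure inside $B(0,1)$, which requires care in the transitional regime $\tilde\lambda\approx 1$, $|\tilde z|\approx 1$; the remaining ingredients (trivial inclusion, change of variables, and the asymptotics of $\tilde{U}[0,1]^{p}$) are routine.
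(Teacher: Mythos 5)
Your argument is correct. Note, however, that the paper does not prove this lemma at all: it is quoted verbatim from Figalli--Glaudo (Corollary B.4), with the reader referred to their Appendix B, so there is no in-paper proof to compare against; your write-up is a valid self-contained reconstruction in the same spirit as the cited source (localize by the symmetry $T_{z_1,\lambda_1}$ of \eqref{eq:2.8}, then compare with the global asymptotics \eqref{eq:2.12} of Lemma \ref{prop:interaction_approx}). Two simplifying remarks on your Step 2: only the lower bound $\int_{B(0,1)}\tilde U_1^{p}\tilde U_2\,dy\gtrsim Q^{(N-2)/2}$ is actually needed, since the upper bound is the trivial inclusion together with \eqref{eq:2.12}; and the ``transitional regime'' you flag is harmless, because if $\tilde\lambda|\tilde z|\le 1$ then $\tilde\lambda|y-\tilde z|\le\tilde\lambda(1+|\tilde z|)\le 2$ on \emph{all} of $B(0,1)$ (so $\tilde U_2\approx\tilde\lambda^{(N-2)/2}=Q^{(N-2)/2}$ there), while if $\tilde\lambda|\tilde z|>1$ either $|\tilde z|\ge 2$, in which case $|y-\tilde z|\approx|\tilde z|$ and $1+\tilde\lambda^{2}|y-\tilde z|^{2}\approx\tilde\lambda^{2}|\tilde z|^{2}$ on all of $B(0,1)$, or else $|\tilde z|<2$ forces $\tilde\lambda\approx 1$, $|\tilde z|\approx 1$, hence $Q\approx 1$ and both sides of the claimed equivalence are $\approx 1$. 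With these observations the case analysis closes with uniform two-sided bounds on the whole ball, and the proof is complete.
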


To prove our main theorem in this current paper, we  have to prepare  two inequalities, which might be elementary. But we cannot f{}ind the references in the literature. For readers convenience, we provide  a  complete proof based on the compactness argument.
\begin{proposition}\label{propos-1}
  Assume  $r\geq 1$, $l\geq 0$. Then there exists a constant $C_{r}>0$ such that
 \begin{equation}\label{eq:2.13}
 \aligned
   &\big|(a+b)|a+b|^{r-1}e^{l(|a|+|b|)} - a|a|^{r-1}e^{l|a|}-(r|a|^{r-1}+la|a|^{r-1})e^{l|a|}b\,\big|\\
   \leq\ &  C_r\big(\{|a|^{r-2}|b|^2 e^{l|a|}\}_{r>2}+l|a|^{r}|b|e^{l|a|} +|b|^re^{l(|a|+|b|)}\big)
   \endaligned
  \end{equation}
  holds for any $a,b\in\R$.
\end{proposition}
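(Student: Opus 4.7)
The plan is to proceed by contradiction using scaling and compactness. Both sides of the inequality are homogeneous of degree $r$ under the transformation $(a,b,l)\mapsto(\lambda a,\lambda b,\lambda^{-1}l)$, $\lambda>0$: every polynomial factor carries a $\lambda^r$, while the exponentials depend only on the invariants $l|a|$ and $l|b|$. If no $C_r$ exists, a violating sequence $(a_n,b_n,l_n)$ with $l_n\geq 0$ can therefore be normalized to $\max(|a_n|,|b_n|)=1$, and along a subsequence $(a_n,b_n)\to(a_*,b_*)$ with $\max(|a_*|,|b_*|)=1$ while $l_n\to l_*\in[0,+\infty]$. Away from the degenerate limits (essentially $b_*=0$ or $l_*=+\infty$), both sides are continuous and the right-hand side is strictly positive, so the ratio admits a finite limit, yielding the desired contradiction.

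To handle the degenerate cases, I would exploit the algebraic decomposition obtained by factoring $e^{l(|a|+|b|)}=e^{l|a|}e^{l|b|}$:
\begin{equation*}
\aligned
&(a+b)|a+b|^{r-1}e^{l(|a|+|b|)}-a|a|^{r-1}e^{l|a|}-(r|a|^{r-1}+la|a|^{r-1})e^{l|a|}b\\
&\quad=e^{l|a|}\Bigl[R(a,b)\,e^{l|b|}+r|a|^{r-1}b\bigl(e^{l|b|}-1\bigr)+a|a|^{r-1}\bigl(e^{l|b|}-1-lb\bigr)\Bigr],
\endaligned
\end{equation*}
where $R(a,b)\coloneqq(a+b)|a+b|^{r-1}-a|a|^{r-1}-r|a|^{r-1}b$ is the standard Taylor remainder of $t\mapsto t|t|^{r-1}$. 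An elementary two-case argument (integral form of the remainder away from the singularity when $|b|\leq|a|/2$, versus crude triangle-inequality bounds when $|b|>|a|/2$) yields $|R(a,b)|\leq C_r(\mathds{1}_{r>2}|a|^{r-2}|b|^2+|b|^r)$, which after multiplication by $e^{l|a|}e^{l|b|}$ is absorbed into the first and third terms of the right-hand side of the claim. The middle bracket is controlled by $|e^{l|b|}-1|\leq l|b|e^{l|b|}$ together with $l|b|\leq e^{l|b|}$. The last bracket is split according to the sign of $b$: for $b\geq 0$ it equals $e^{lb}-1-lb=O((lb)^2 e^{lb})$, while for $b<0$ it equals $e^{l|b|}-1+l|b|$, and the leading $+l|b|$ piece furnishes precisely the $l|a|^r|b|e^{l|a|}$ quantity on the right (this is why the linearization uses the right-sided derivative $r|a|^{r-1}+la|a|^{r-1}$ rather than its two-sided counterpart).

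The main obstacle will be the case $l_n\to+\infty$ with $l_n|b_n|$ in an intermediate range, where continuity of the unnormalized ratio fails because the exponential corrections $e^{l|b|}-1$ are of order unity. The remedy is to recast the compactness argument in the rescaled variables $u=b/a$ and $v=lb$: after the first normalization, the reduced ratio $F(u,v)/(\mathds{1}_{r>2}u^2+v+u^re^v)$, with $F(u,v)\coloneqq(1+u)^re^v-1-ru-v$, extends continuously to the relevant closed parameter set (including the $u=0$ boundary), and the preceding decomposition combined with the standard estimate on $R$ shows it is uniformly bounded there, which completes the contradiction.
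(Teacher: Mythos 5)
Your algebraic decomposition is correct, and for $l=0$ it does give a clean, compactness-free proof (the identity reduces the claim to the classical Taylor-remainder bound $|R(a,b)|\le C_r(\mathds{1}_{r>2}|a|^{r-2}|b|^2+|b|^r)$), which is simpler than the paper's argument by contradiction. The gap is in how you close the case $l>0$. The bracket $a|a|^{r-1}\bigl(e^{l|b|}-1-lb\bigr)$, after multiplication by $e^{l|a|}$, cannot be absorbed into the right-hand side once $l|b|\gtrsim 1$ and $|b|\ll|a|$: for $b\ge 0$ it has size $|a|^{r}(e^{lb}-1-lb)e^{l|a|}$, which is exponentially large in $l|b|$, while the available majorants are $l|a|^{r}|b|e^{l|a|}$ (only polynomial in $l|b|$) and $|b|^{r}e^{l(|a|+|b|)}$ (too small by the factor $(|b|/|a|)^{r}$); the bound $e^{lb}-1-lb=O((lb)^2e^{lb})$ does not help for the same reason. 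Equivalently, your final claim that $F(u,v)/(\mathds{1}_{r>2}u^2+v+u^{r}e^{v})$ is uniformly bounded on the closed parameter set is false: the set is unbounded in $v=lb$, and along $u\to0$, $v\to\infty$ (say $u=e^{-v}$) the numerator is $\sim e^{v}$ while the denominator is $\sim\max(v,u^{r}e^{v})$, so the ratio diverges. Note also that your normalization $\max(|a_n|,|b_n|)=1$ with $l_n$ varying silently targets a statement uniform in $l$, but the obstruction already occurs for a single fixed $l>0$.

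This is not a defect you can repair, because for every fixed $l>0$ the stated inequality is actually false: fix $b>0$ and let $a\to+\infty$; the left-hand side is asymptotic to $(e^{lb}-1-lb)\,a^{r}e^{la}$ while the right-hand side is asymptotic to $C_r\,l\,a^{r}b\,e^{la}$, so one would need $C_r\ge (e^{lb}-1-lb)/(lb)$ for every $b$, which is impossible. (The paper's own proof stumbles at exactly this point: after reducing to $b_j/a_j\to0$ it bounds $|(1+\tfrac{b_j}{a_j})|1+\tfrac{b_j}{a_j}|^{r-1}(e^{l|b_j|}-1-lb_j)|$ by $2^{r+1}l|b_j|$, which is legitimate only when $l|b_j|$ stays bounded; nothing in the hypotheses forces that.) The proposition is correct, and your argument closes, precisely when $l=0$ — the only case the paper ever invokes (it is always applied with $l=0$ in Sections 3 and 4) — or under an a priori bound on $l|b|$, or after strengthening the majorant, e.g. replacing $l|a|^{r}|b|e^{l|a|}$ by $l|a|^{r}|b|e^{l(|a|+|b|)}$ (using $|e^{l|b|}-1-lb|\le 2l|b|e^{l|b|}$), in which case your decomposition goes through with a mild extra case split on $l|b|\le1$ versus $l|b|>1$. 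As written, however, the compactness "remedy" in your last paragraph asserts a boundedness that fails, and no alternative argument can supply it.
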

 \begin{proof}
 Notice that the inequality is always true with $C_{r}=1$ as $a=0$ or $b=0$. Suppose the inequality \eqref{eq:2.13} fails, then there exist $\{a_{j}\}_{j\geq 1},\{b_{j}\}_{j\geq 1}\in \mathbb{R}\setminus\{0\} $
and  $C_{j}\to +\infty$ as $j\to \infty$, such that for any $j\geq 1$, there holds
\begin{equation}\label{eq:2.14}
\aligned
&\big|(a_{j}+b_{j})|a_{j}+b_{j}|^{r-1}e^{l(|a_{j}|+|b_{j}|)}-a_{j}|a_{j}|^{r-1}e^{l|a_{j}|}- (r|a_{j}|^{r-1}+la_{j}|a_{j}|^{r-1})e^{l|a_{j}|}b_{j}\big|\\
>\ \ &C_j\big(\{|a_{j}|^{r-2}|b_{j}|^2 e^{l|a_{j}|}\}_{r>2} +l|a_{j}|^{r}|b_{j}|e^{l|a_{j}|} +|b_{j}|^re^{l (|a_{j}|+|b_{j}|)}\big),
\endaligned
\end{equation}	
then we divide by $|a_{j}|^{r}e^{l|a_{j}|}$ in both side of \eqref{eq:2.14} to get
\begin{equation}\label{eq:2.15}
\aligned
&\Big| \,\Big(1+\frac{b_{j}}{a_{j}}\Big)\Big|1+\frac{b_{j}}{a_{j}}\Big|^{r-1}e^{l |b_{j}|}-1-r\,\frac{b_{j}}{a_{j}}-lb_{j}\, \Big|> C_{j}\Big(  \Big\{ \Big|\frac{b_{j}}{a_{j}} \Big|^{2}  \Big\}_{r>2} + l|b_{j}|+ \Big|\frac{b_{j}}{a_{j}} \Big|^{r} e^{l|b_{j}|}   \Big).
\endaligned
\end{equation}
We claim that $\varliminf\limits_{j\to \infty}\big|\frac{b_{j}}{a_{j}}\big|=0$. In fact, if $\varliminf\limits_{j\to \infty}\big|\frac{b_{j}}{a_{j}}\big|>0$, then up to a subsequence, there exists a constant $m>0$, such that $\big|\frac{b_{j}}{a_{j}}\big|\geq m$. Then by \eqref{eq:2.15}, we have
$$                                                     \aligned
&\Big[\,(1+\varlimsup\limits_{j\to \infty}\big|\frac{b_{j}}{a_{j}}\big|)^{r}e^{l\cdot\varlimsup\limits_{j\to \infty}|b_{j}|}+1+r\cdot\varlimsup\limits_{j\to \infty}\big|\frac{b_{j}}{a_{j}}\big| +l\cdot \varlimsup\limits_{j\to \infty}|b_{j}|  \,\Big]\geq  \lim\limits_{j\to \infty} C_{j}(\{m^{2}\}_{r>2}+m^{r}),
\endaligned
 $$
which means  that either $\varlimsup\limits_{j\to \infty}\big|\frac{b_{j}}{a_{j}}\big|=+\infty$ or $\varlimsup\limits_{j\to \infty}|b_{j}|=+\infty$, then we divide by $|b_{j}|^{r}e^{l(|a_{j}|+|b_{j}|) }$ in both side of \eqref{eq:2.14} we obtain
\begin{equation}\label{eq:2.16}
\aligned
&\Big| \,\Big(\frac{a_{j}}{b_{j}}+1\Big)\Big|\frac{a_{j}}{b_{j}}+1\Big|^{r-1}-\frac{a_{j}}{b_{j}}\Big|\frac{a_{j}}{b_{j}}\Big|^{r-1}e^{-l|b_{j}|}-r \Big|\frac{a_{j}}{b_{j}}\Big|^{r-1}e^{-l|b_{j}|} -l \frac{a_{j}}{b_{j}}\Big|\frac{a_{j}}{b_{j}}\Big|^{r-1}b_{j}e^{-l|b_{j}|} \, \Big|\\
>\ \ &C_{j}\Big(  \Big\{\Big|\frac{a_{j}}{b_{j}} \Big|^{r-2}\Big\}_{r>2}\cdot e^{-l|b_{j}|} +l\Big|\frac{a_{j}}{b_{j}} \Big|^{r} \cdot|b_{j}| \cdot e^{-l|b_{j}|} +1  \Big)>C_{j}.
\endaligned
\end{equation}
If $\varlimsup\limits_{j\to \infty}\big|\frac{b_{j}}{a_{j}}\big|=+\infty$, then up to a subsequence, there holds $\lim\limits_{j\to \infty}\big|\frac{a_{j}}{b_{j}}\big|=0$,
let $j\to \infty$ in \eqref{eq:2.16}, we arrive at $1\geq +\infty$, which gives a contradiction. If $\varlimsup\limits_{j\to \infty}|b_{j}|=+\infty$, then up to a subsequence, there holds $\lim\limits_{j\to \infty}|b_{j}|=+\infty$. Recall that we have $\big|\frac{a_{j}}{b_{j}}\big|\leq \frac{1}{m}$, by letting  $j\to \infty$ in \eqref{eq:2.16}, we arrive at $(\frac{1}{m}+1)^{r}\geq +\infty$, which also gives contradiction. Therefore $\varliminf\limits_{j\to \infty}\big|\frac{b_{j}}{a_{j}}\big|=0$. Assume $\lim\limits_{j\to \infty}\big|\frac{b_{j}}{a_{j}}\big|=0$, up to a subsequence.Then, taking the Taylor's expansion of the left hand side  of \eqref{eq:2.15} with respect to $\frac{b_{j}}{a_{j}}$, we get that
$$
\aligned
 &\Big| \,\Big(1+\frac{b_{j}}{a_{j}}\Big)\Big|1+\frac{b_{j}}{a_{j}}\Big|^{r-1}e^{l |b_{j}|}-1-r\,\frac{b_{j}}{a_{j}}-lb_{j}\, \Big|\\
 =\ &\bigg|\Big(1+\frac{b_{j}}{a_{j}}\Big)\Big|1+\frac{b_{j}}{a_{j}}\Big|^{r-1}\Big(e^{l|b_{j}|}-1-l b_{j}\Big)+\Big[\Big(1+\frac{b_{j}}{a_{j}}\Big)\Big|1+\frac{b_{j}}{a_{j}}\Big|^{r-1}-1   \Big]l b_{j} \\
    & + \Big(1+\frac{b_{j}}{a_{j}}\Big)\Big|1+\frac{b_{j}}{a_{j}}\Big|^{r-1}-1 -r\frac{b_{j}}{a_{j}} \bigg|\\
 \leq\ & 2^{r+1}l\cdot |b_{j}| + O(\Big|\frac{b_{j}}{a_{j}}\Big|)  \cdot l\cdot |b_{j}| +\frac{r(r-1)}{2}\Big|\frac{b_{j}}{a_{j}}\Big|^{2}+o\,(\Big|\frac{b_{j}}{a_{j}}\Big|^{2}) \\
 \leq\ & (2^{r+1}+1)l\cdot |b_{j}|+\frac{r(r-1)}{2}\Big|\frac{b_{j}}{a_{j}}\Big|^{2}+o\,(\Big|\frac{b_{j}}{a_{j}}\Big|^{2})  \text{   \quad as \quad    } j\gg 1 ,
\endaligned
 $$
hence the inequality  $\eqref{eq:2.15}$ reduces  to
\begin{equation}\label{eq:2.17}
\aligned
 (2^{r+1}+1)l\cdot |b_{j}|+\frac{r(r-1)}{2}\Big|\frac{b_{j}}{a_{j}}\Big|^{2}+o\,(\Big|\frac{b_{j}}{a_{j}}\Big|^{2}) &>   C_{j}\Big(  \Big\{ \Big|\frac{b_{j}}{a_{j}} \Big|^{2}  \Big\}_{r>2}+ l|b_{j}| + \Big|\frac{b_{j}}{a_{j}} \Big|^{r} \Big) \\
 &>C_{j}\Big(   l|b_{j}| + \Big|\frac{b_{j}}{a_{j}} \Big|^{2} \Big)  \text{   \quad as \quad    } j\gg 1 .
\endaligned
\end{equation}
On the other hand, we take $j\gg1$, such that $o\,(\big|\frac{b_{j}}{a_{j}}\big|^{2})<\frac{r(r-1)}{2}\big|\frac{b_{j}}{a_{j}}\big|^{2}$ and $C_{j}>(2^{r+1}+1)+r(r-1)$, then
$$
(2^{r+1}+1)l\cdot |b_{j}|+\frac{r(r-1)}{2}\big|\frac{b_{j}}{a_{j}}\big|^{2}+o\,(\big|\frac{b_{j}}{a_{j}}\big|^{2})<(2^{r+1}+1)l\cdot |b_{j}|+ r(r-1)\big|\frac{b_{j}}{a_{j}}\big|^{2} < C_{j}\Big(  l|b_{j}|+\big|\frac{b_{j}}{a_{j}}\big|^{2} \Big),
 $$
which contradicts to \eqref{eq:2.17}. Therefore there exists a constant $C_{r}>0$ such that \eqref{eq:2.13} holds for all $a,b\in\mathbb{R}$.
 \end{proof}

\begin{proposition}
    For the real number $r\geq 1$ and the integer $\nu\geq 1 $, there exists a constant $C_{r,\nu}>0$, such that
   \begin{equation}\label{eq:2.18}
     \Big|(\sum_{i=1}^{\nu}a_i)\big|\sum_{i=1}^{\nu}a_i\big|^{r-1}
  -\sum_{i=1}^\nu a_i|a_i|^{r-1}\Big|\leq C_{r,\nu} \sum_{1\le i\not=j \le\nu} |a_i|^{r-1}|a_j|
  \end{equation}
  holds for any $a_1,\cdots,a_\nu\in\R$.
  \end{proposition}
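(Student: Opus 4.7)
The plan is to proceed by induction on $\nu$, since the inequality has a clean recursive structure once the two-term case is established. The base case $\nu=1$ is trivial (both sides vanish), so I treat $\nu=2$ as the substantive base. Setting $\phi(t):=t|t|^{r-1}$, the target becomes $|\phi(a+b)-\phi(a)-\phi(b)|\leq C_{r,2}(|a|^{r-1}|b|+|b|^{r-1}|a|)$. For $r=1$ this is trivial; for $r>1$, $\phi$ is $C^1$ with $\phi'(t)=r|t|^{r-1}$, so by the fundamental theorem of calculus $|\phi(a+b)-\phi(a)|\leq \int_0^1 r|a+tb|^{r-1}|b|\,dt\leq r(|a|+|b|)^{r-1}|b|$. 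Assuming WLOG $|a|\geq |b|$, this is bounded by $r\cdot 2^{r-1}|a|^{r-1}|b|$, and $|\phi(b)|=|b|^r\leq |a|^{r-1}|b|$. Adding these yields the base case with constant $C_{r,2}=r\cdot 2^{r-1}+1$.

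For the inductive step, assume the inequality for $\nu-1$ and set $S:=\sum_{i=1}^{\nu-1}a_i$. Then
\begin{equation*}
\Big|\phi\Big(\sum_{i=1}^{\nu}a_i\Big)-\sum_{i=1}^{\nu}\phi(a_i)\Big|\leq |\phi(S+a_{\nu})-\phi(S)-\phi(a_{\nu})|+\Big|\phi(S)-\sum_{i=1}^{\nu-1}\phi(a_i)\Big|.
\end{equation*}
The first term is bounded by $C_{r,2}(|S|^{r-1}|a_{\nu}|+|a_{\nu}|^{r-1}|S|)$ via the base case, and the second by $C_{r,\nu-1}\sum_{1\leq i\neq j\leq \nu-1}|a_i|^{r-1}|a_j|$ by the induction hypothesis. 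Using $|S|\leq \sum_{i=1}^{\nu-1}|a_i|$ together with the elementary bound $(|a_1|+\dots+|a_{\nu-1}|)^{r-1}\leq C_{r,\nu}'\sum_{i=1}^{\nu-1}|a_i|^{r-1}$ (which holds for all $r\geq 1$, either by subadditivity when $r\in[1,2]$ or by power-mean convexity when $r\geq 2$), both $|S|^{r-1}|a_{\nu}|$ and $|a_{\nu}|^{r-1}|S|$ are controlled by linear combinations of cross-terms $|a_i|^{r-1}|a_{\nu}|$ and $|a_{\nu}|^{r-1}|a_i|$ with $i<\nu$, all of which appear in the desired right-hand side for $\nu$. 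Collecting constants gives $C_{r,\nu}$.

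The main technical point is the two-term base case, where one must handle the non-smoothness of $t\mapsto t|t|^{r-1}$ near $t=0$ when $r$ is close to $1$; the integral representation via $\phi'$ avoids subtleties at the origin because $\phi'(t)=r|t|^{r-1}$ is locally integrable for any $r\geq 1$. Alternatively, if one wished to parallel the compactness argument of Proposition \ref{propos-1}, one could normalize by $\sum_{i=1}^{\nu}|a_i|=1$, observe that the right-hand side vanishes only when at most one $a_i$ is nonzero (in which case the left-hand side also vanishes), and conclude by continuity on the compact simplex. I would, however, prefer the direct inductive argument above, as it produces an explicit (albeit non-sharp) constant $C_{r,\nu}$ that only mildly blows up in $\nu$.
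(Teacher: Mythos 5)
Your proof is correct, and it diverges from the paper's at the key point: both you and the paper prove the proposition by induction on $\nu$ with the two-term inequality as the engine, but you establish the base case $\nu=2$ directly via the fundamental theorem of calculus applied to $\phi(t)=t|t|^{r-1}$ (giving the explicit constant $C_{r,2}=r\cdot 2^{r-1}+1$), whereas the paper proves it by contradiction and a compactness/normalization argument in the style of Proposition \ref{propos-1}, which yields existence of a constant but no explicit value. Your inductive step is essentially the same reduction as the paper's (split off the last summand, apply the two-term bound, apply the induction hypothesis, then control $|S|^{r-1}$ by $\sum_i|a_i|^{r-1}$), but you handle the exponent more carefully: the paper appeals to convexity of $x\mapsto|x|^{r-1}$ and states the inequality $|\sum_i a_i|^{r-1}\leq\nu^{r-2}\sum_i|a_i|^{r-1}$, which as written is only valid for $r\geq 2$ (and the paper itself remarks ``since $r>2$'' at that point, leaving the range $1<r<2$ unaddressed); your split between subadditivity for $r\in[1,2]$ and power-mean convexity for $r\geq 2$ closes that gap cleanly. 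In short, the FTC route is more elementary, gives explicit constants, and is more uniformly careful about the small-$r$ regime; the paper's compactness route is shorter to state but non-constructive and, as written, incomplete in the inductive step for $1<r<2$.
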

\begin{proof} If  $r=1$ or $\nu=1$, the inequality \eqref{eq:2.18} always holds, thus we assume $r>1$ and $\nu\geq 2$. Next we  apply  the  induction method   on $\nu$ to prove the inequality \eqref{eq:2.18}.
\vskip0.08in
\noindent {\bf Step 1:}
Consider the case $\nu=2 $, it is suf{}f{}ice to prove that, there exists a constant $C_{r}>0$, such that
\begin{equation}\label{eq:2.19}
\big|(a+b)|a+b|^{r-1}-a|a|^{r-1}-b|b|^{r-1}\big|\leq C_{r}( |a|^{r-1}|b|+|a||b|^{r-1})
\end{equation}
holds for any $a,b\in \mathbb{R}$. Obviously, the inequality \eqref{eq:2.19} is true when  $a=0$ or $b=0$. Suppose the inequality \eqref{eq:2.19} fails, then there exist $\{a_{j}\}_{j\geq 1},\{b_{j}\}_{j\geq 1}\in \mathbb{R}\setminus\{0\} $
and $C_{j}\to +\infty$ as $j\to +\infty$, such that for any $j\geq 1$, there holds
\begin{equation}\label{eq:2.20}
\big|(a_{j}+b_{j})|a_{j}+b_{j}|^{r-1}-a_{j}
|a_{j}|^{r-1}-b_{j}|b_{j}|^{r-1}\big|>C_{j}
\Big( |a_{j}|^{r-1}|b_{j}|+|a_{j}||b_{j}|^{r-1}\Big),
\end{equation}	
then we divide by $|a_{j}|^{r}$ in both sides and  get that
\begin{equation}\label{eq:2.21}
\Big| \,(1+\frac{b_{j}}{a_{j}})\big|1+\frac{b_{j}}{a_{j}}\big|^{r-1}-1-\frac{b_{j}}{a_{j}}|\frac{b_{j}}{a_{j}}|^{r-1}\, \Big|> C_{j}\Big(  \big|\frac{b_{j}}{a_{j}} \big| +  \big|\frac{b_{j}}{a_{j}} \big|^{r-1}   \Big).
\end{equation}
We claim that $\varliminf\limits_{j\to \infty}\big|\frac{b_{j}}{a_{j}}\big|=0$. In fact, if $\varliminf\limits_{j\to \infty}\big|\frac{b_{j}}{a_{j}}\big|>0$, then up to a subsequence, there exists $m>0$, such that $\big|\frac{b_{j}}{a_{j}}\big|\geq m$  as  $j\gg 1$. Then by $\eqref{eq:2.21}$, we have
$$
\Big[(1+\varlimsup\limits_{j\to \infty}\big|\frac{b_{j}}{a_{j}}\big|\big)^{r}+1+\big(\varlimsup\limits_{j\to \infty}\big|\frac{b_{j}}{a_{j}}\big|\big)^{r}  \Big]>\lim\limits_{j\to \infty}C_{j}(m+m^{r-1}) ,
 $$
which means $\varlimsup\limits_{j\to \infty}\big|\frac{b_{j}}{a_{j}}\big|=+\infty$, so we can assume $\lim\limits_{j\to \infty}\big|\frac{a_{j}}{b_{j}}\big|=0$ up to a subsequence.  Dividing  by $|b_{j}|^{r}$ in both sides of $\eqref{eq:2.20}$, we observe that
 $$
\Big| \,(\frac{a_{j}}{b_{j}}+1)\big|\frac{a_{j}}{b_{j}}+1\big|^{r-1}-\frac{a_{j}}{b_{j}}\big|\frac{a_{j}}{b_{j}}\big|^{r-1}-1\, \Big|> C_{j}\Big(  \big|\frac{a_{j}}{b_{j}} \big|^{r-1} + \big|\frac{a_{j}}{b_{j}} \big|  \Big),
 $$
then the Taylor's expansion of the left hand side with respect to $\frac{a_{j}}{b_{j}}$ gives
\begin{equation}\label{eq:2.22}
\Big| \,r\frac{a_{j}}{b_{j}}-\frac{a_{j}}{b_{j}}\big|\frac{a_{j}}{b_{j}}\big|^{r-1}+o\Big(\big| \frac{a_{j}}{b_{j}}\big|\Big)\, \Big|> C_{j}\Big(   \big|\frac{a_{j}}{b_{j}} \big|^{r-1} + \big|\frac{a_{j}}{b_{j}} \big|   \Big)  \text{   \quad as \quad    } j\gg 1.
\end{equation}
Therefore, as  $j\gg 1$ (using $r>1$), we get that
 $$
2r \big| \frac{a_{j}}{b_{j}}\big| >C_{j}\big| \frac{a_{j}}{b_{j}}\big|,
 $$
which gives a contradiction. Therefore $\varliminf\limits_{j\to \infty}\big|\frac{b_{j}}{a_{j}}\big|=0$. Assume $\lim\limits_{j\to \infty}\big|\frac{b_{j}}{a_{j}}\big|=0$ up to a subseqence, then  the Taylor's expansion of the left hand side of $\eqref{eq:2.21}$ with respect to $\frac{b_{j}}{a_{j}}$ implies  similar form as  \eqref{eq:2.22}, i.e.,
$$ 
\Big| \,r\frac{b_{j}}{a_{j}}-\frac{b_{j}}{a_{j}}
\big|\frac{b_{j}}{a_{j}}\big|^{r-1}+o
\Big(\big| \frac{b_{j}}{a_{j}}\big|\Big)\, 
\Big|> C_{j}\Big(   \big|\frac{b_{j}}{a_{j}} \big|^{r-1} 
+ \big|\frac{b_{j}}{a_{j}} \big|   \Big)  
\text{   \quad as \quad    } j\gg 1 ,
 $$
which  also gives a contradiction. Therefore, there exists a constant $C_{r}>0$ such that $\eqref{eq:2.19}$ holds true for any $a,b\in\mathbb{R}$.
\vskip0.108in
\noindent {\bf Step 2:} By induction, we assume that the inequality \eqref{eq:2.18} is true  with $\nu$-Sum and try to prove  \eqref{eq:2.18}   with $(\nu+1)$-Sum. First, since $r>2$, so $f(x)\coloneqq |x|^{r-1}$ is a convex function, thus by the Jensen's inequality
\begin{align}
f(\frac{1}{\nu}\sum^{\nu}_{i=1}a_{i})\leq \frac{1}{\nu}\sum^{\nu}_{i=1}f(a_{i})\implies \big|\sum^{\nu}_{i=1}a_{i}\big|^{r-1}\leq \nu^{r-2}\sum^{\nu}_{i=1}|a_{i}|^{r-1}.
\end{align}
Then for the $(\nu+1)$-Sum, we let $a=\sum_{i=1}^{\nu}a_{i}$, $b=a_{\nu+1}$. By using the estimate \eqref{eq:2.19} and the induction hypothesis which claims the inequality \eqref{eq:2.18} holds with $\nu$-Sum, we obtain that
\begin{align}
\phantom{\leq\,\,}&\bigg| \Big(\sum_{i=1}^{\nu+1}a_{i}  \Big)\Big|  \sum_{i=1}^{\nu+1}a_{i}   \Big|^{r-1}  -\sum_{i=1}^{\nu+1}a_{i}\big|a_{i}\big|^{r-1}  \bigg| \\
\leq\,\,&  \bigg| \Big(\sum_{i=1}^{\nu}a_{i}+a_{\nu+1}  \Big)\Big|  \sum_{i=1}^{\nu}a_{i} +a_{\nu+1}   \Big|^{r-1}  -\Big(\sum_{i=1}^{\nu}a_{i}  \Big)\Big|  \sum_{i=1}^{\nu}a_{i}   \Big|^{r-1}-a_{\nu+1}\big|a_{\nu+1}\big|^{r-1}  \bigg|\\
&+ \bigg| \Big(\sum_{i=1}^{\nu}a_{i}  \Big)\Big|  \sum_{i=1}^{\nu}a_{i}   \Big|^{r-1}  -\sum_{i=1}^{\nu}a_{i}\big|a_{i}\big|^{r-1}  \bigg| \\
=\,\,&\Big| (a+b)|a+b|^{r-1}  -a|a|^{r-1}-b|b|^{r-1}  \Big|+\bigg| \Big(\sum_{i=1}^{\nu}a_{i}  \Big)\Big|  \sum_{i=1}^{\nu}a_{i}   \Big|^{r-1}  -\sum_{i=1}^{\nu}a_{i}\big|a_{i}\big|^{r-1}  \bigg|\\
\leq\,\,& C_{r}(|a|^{r-1} |b|+|a||b|^{r-1})+C_{r,\nu}\sum_{1\leq i\neq j \leq \nu} \big|a_{i}\big|^{r-1} \big|a_{j}\big| \\
\leq \,\,& C_{r}\bigg(\sum^{\nu}_{i=1}|a_{i}|^{r-1}\bigg)|a_{\nu+1}|	+C_{r}\bigg(\sum^{\nu}_{i=1}|a_{i}|\bigg)|a_{\nu+1}|^{r-1}+ C_{r,\nu}\sum_{1\leq i\neq j \leq \nu} \big|a_{i}\big|^{r-1} \big|a_{j}\big|\\
\leq\,\,& C_{r,\nu+1} \sum_{1\leq i\neq j \leq \nu+1} \big|a_{i}\big|^{r-1} \big|a_{j}\big|,
\end{align}
where $C_{r,\nu+1}\coloneqq\max\,(C_{r},C_{r,\nu})$.
\end{proof}

\section{Interaction integral estimate}\label{sec-intraction}

  In this section, we prove the interaction integral inequality \eqref{eq:3.12}, which will play key roles in the Section \ref{sec-quantitative}. As a f{}irst step, we give the localization argument in the next auxiliary Lemma, which is crucial in the interaction integral estimate.

  \begin{lemma}\label{lem:localization}
  For any $N\ge 3$, $\nu\in\N^{+}$ and $ \varepsilon>0$, there exists $\delta=\delta(N,\nu,
  \varepsilon)>0$ such that if
  $U_1=U[z_1,\lambda_1],\dots,U_\nu=U[z_\nu,\lambda_\nu]$ is a
  $\delta$-interacting family of $\nu$ Talenti
  bubbles from \eqref{eq:1.4}, then for any $1\le i\le \nu$ there exists a Lipschitz bump function
  $\Phi_i:\R^N\to[0,1] $ such that the following statements hold:
  \begin{enumerate}[label=\textit{(\arabic*)}]
  \item \label{it:localization1}The mass of $U_{i}^{p+1}$ and $|\nabla U_{i}|^{2} $ in the region $\{\Phi_i<1\}$ is small, that is
  $$\int_{\{\Phi_i<1\}} U_{i}^{p+1}\,dx\leq  \varepsilon\  \text{\ \ \ and\ \ \ } \int_{\{\Phi_i<1\}} |\nabla U_{i}|^{2}\,dx\leq  \varepsilon  ;  $$
  \item\label{it:localization2}In the region $\{\Phi_i>0\}$ it holds $ \varepsilon U_i >U _j$
  for any $j\not=i$, which implies that  dif{}ferent $\Phi_{i}$ has disjoint supports;
  \item \label{it:localization3}The $L^N$-norm of the gradient is small, that is
  $$\|\nabla\Phi_i\|_{L^N}\le\varepsilon;$$
  \item \label{it:localization4}
  For any $j\not=i$ such that $\lambda_j\le \lambda_i$, it holds
  $$\frac{\sup_{\{\Phi_i>0\}}U_j}{\inf_{\{\Phi_i>0\}}U_j}\le 1+\varepsilon.$$
    \end{enumerate}
     \end{lemma}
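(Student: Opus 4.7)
The plan is to construct each $\Phi_i$ as a product of logarithmic-scale radial cutoffs, exploiting the scale invariance of the problem via the transformations $T_{z,\lambda}$ from (2.8)--(2.10).

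\textbf{Step 1: Reduction.} By applying $T_{z_i,\lambda_i}^{-1}$, I may assume $z_i=0$ and $\lambda_i=1$, so that $U_i(x)=(N(N-2))^{(N-2)/4}(1+|x|^2)^{-(N-2)/2}$. The $\delta$-interaction hypothesis (1.15) then gives that, for every $j\neq i$, at least one of
\[
\lambda_j\le\delta,\qquad 1/\lambda_j\le\delta,\qquad (\lambda_j|z_j|^2)^{-1}\le\delta
\]
holds, so each ``other'' bubble is either very spread out, very concentrated, or centered very far away.

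\textbf{Step 2: Pointwise dominance regions.} For each $j\neq i$, a direct computation with the explicit formula for $U_j$ shows:
\begin{itemize}
\item If $\lambda_j\le\delta$ or $(\lambda_j|z_j|^2)^{-1}\le\delta$, then $U_j\le\varepsilon U_i$ throughout $\{|x|\le R_1\}$ for some radius $R_1=R_1(\delta,\varepsilon)\to\infty$ as $\delta\to 0$.
\item If $1/\lambda_j\le\delta$, then $U_j\le\varepsilon U_i$ outside a small ball $B(z_j,r_j)$ with $r_j=r_j(\delta,\varepsilon)\to 0$ as $\delta\to 0$.
\end{itemize}
Let $J_{\mathrm{conc}}$ denote the set of indices $j\neq i$ falling into the second case.

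\textbf{Step 3: Logarithmic cutoffs.} Fix an exponent $\alpha>0$ and set
\[
\Phi_i(x)\coloneqq\eta_0(|x|)\,\prod_{j\in J_{\mathrm{conc}}}\eta_j(|x-z_j|),
\]
where $\eta_0$ is a radial log-scale cutoff that equals $1$ on $\{|x|\le R_0\}$ and $0$ on $\{|x|\ge R_1\}$ with $R_1/R_0=\delta^{-\alpha}$, and each $\eta_j$ vanishes on $B(z_j,r_j)$ and equals $1$ outside $B(z_j,\delta^{\alpha}\cdot r_j^{-1}\cdot\ldots)$ suitably defined so that the inner boundary is well inside the dominance region. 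Taking the transition in the form $\eta(s)=\chi(\log s/\log\delta^{-\alpha})$ with $\chi$ a fixed one-dimensional bump, a standard computation yields
\[
\|\nabla\eta\|_{L^N}^N\lesssim(\log\delta^{-\alpha})^{1-N},
\]
and so by the triangle inequality on the $\nu$ factors, property \emph{(3)} follows once $\delta$ is taken small enough in terms of $\nu,\varepsilon$.

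\textbf{Step 4: Verification.} Property \emph{(2)} is immediate from Step 2 on $\mathrm{supp}(\Phi_i)$, and the disjointness of the supports of the $\Phi_i$ is automatic since $\varepsilon U_i>U_j$ and $\varepsilon U_j>U_i$ cannot both hold for $\varepsilon<1$. Property \emph{(1)} follows from the scale-invariant tail estimates
\[
\int_{|x|\ge R_0}\bigl(U_i^{p+1}+|\nabla U_i|^2\bigr)\,dx\lesssim R_0^{-N},\qquad \int_{B(z_j,r_j)}\bigl(U_i^{p+1}+|\nabla U_i|^2\bigr)\,dx\lesssim r_j^{N},
\]
since $U_i$ is bounded on the exclusion balls $B(z_j,r_j)$ (which lie inside $\{|x|\le R_1\}$), and by choosing $R_0\to\infty$, $r_j\to 0$ as $\delta\to 0$ both terms are $\le\varepsilon$. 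Property \emph{(4)} follows from the fact that any $j\neq i$ with $\lambda_j\le\lambda_i=1$ must actually satisfy $\lambda_j\le\delta$, so $U_j$ is very slowly varying on the bounded set $\mathrm{supp}(\Phi_i)\subset B(0,R_1)$; an explicit Taylor estimate on $(1+\lambda_j^2|x-z_j|^2)^{-(N-2)/2}$ gives oscillation of order $O(\delta^{c})$.

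The main obstacle is the parameter balance in Step 3: the log-scale transition width $\delta^{-\alpha}$ must be large enough to make $\|\nabla\Phi_i\|_{L^N}\le\varepsilon$, while the inner radii $R_0$ and $1/r_j$ must be large enough to capture essentially all of $U_i$'s mass and to secure the dominance $\varepsilon U_i>U_j$ on $\mathrm{supp}(\Phi_i)$. The $\delta$-weak interaction provides precisely the scale separation needed to reconcile these: each of the three alternatives in (1.15) yields an order-$\delta^{-c}$ gap in one of the ratios appearing in $U_j/U_i$, and choosing $\alpha$ smaller than $c$ lets the log transition fit comfortably inside that gap. All four properties are then obtained by taking $\delta$ small enough in terms of $\nu$ and $\varepsilon$.
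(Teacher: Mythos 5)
Your overall strategy matches the paper's (and Figalli--Glaudo's Lemma 3.9, to which the paper defers the cutoff construction): reduce to $z_i=0$, $\lambda_i=1$, sort the other bubbles into those globally dominated by $\varepsilon U_i$ on a large ball versus those needing a small exclusion ball, and patch together logarithmic-scale cutoffs. However, Step~2's classification is wrong, and it is a genuine gap rather than a cosmetic one. You sort $j$ by \emph{which of the three alternatives} in \eqref{eq:1.15} holds, but the alternatives do not line up with the geometric dichotomy. Concretely, take $\lambda_j=100$ and $|z_j|=\delta^{-1/2}/10$: then $\min(\lambda_j,1/\lambda_j,1/(\lambda_j|z_j|^2))=1/(\lambda_j|z_j|^2)=\delta$, so the third alternative holds and $1/\lambda_j=1/100\not\le\delta$, which by your rule places $j$ in the ``dominated on $B(0,R_1)$'' class and not in $J_{\mathrm{conc}}$. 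Yet for any $R_1\gtrsim\delta^{-1/2}$ the point $z_j$ lies in $B(0,R_1)$ and $U_j(z_j)/U_i(z_j)\approx \lambda_j^{(N-2)/2}\cdot|z_j|^{N-2}\to\infty$, so $\varepsilon U_i>U_j$ fails. The correct split, as in the paper's Claims 1--3, is geometric: $j$ needs an exclusion ball precisely when $\lambda_j\ge 1$ \emph{and} $|z_j|<2R$ (the set $G$ there), regardless of which alternative in \eqref{eq:1.15} realizes the minimum; all other $j$ are dominated on $B(0,R)$. The $\delta$-interaction is then used \emph{within each geometric case} to quantify the dominance, not to define the cases.

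A second, smaller error is in Step~4: you assert that $\lambda_j\le\lambda_i=1$ forces $\lambda_j\le\delta$, which is false --- one can have $\lambda_j=1$ and $|z_j|$ huge, with the interaction carried entirely by $1/(\lambda_j|z_j|^2)\le\delta$. Property \emph{(4)} still holds in that subcase because $|x-z_j|\approx|z_j|$ uniformly over the bounded support of $\Phi_i$, so $U_j$ is again nearly constant there, but your justification must treat both subcases (as the paper's Claim~2 does, splitting on whether $\delta=\lambda_j$ or $\delta=1/(\lambda_j|z_j|^2)$). Once these two classifications are repaired, the log-cutoff construction in Step~3 and the tail/mass estimates in Step~4 go through and are essentially the same computations the paper carries out.
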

     \begin{proof}
     Here we  only give the  proof   for $i=\nu$ and assume $U_{\nu}=U[0,1]$. Thus,  by the def{}inition of $\delta$-interacting, we have that
   $$
         \max_{j\neq \nu}\ \min\Big(\lambda_{j},\frac{1}{\lambda_{j}},\frac{1}{\lambda_{j}|z_{j}|^{2}}\Big)\leq \delta.
$$
     Inspired by \cite{figalli}, we take
     \begin{equation}\label{eq:3.1}
        \delta=\epsilon ^{16-\frac{2}{N-2 }}, \quad R=\epsilon ^{-2}.
     \end{equation}
Going back to the Def{}inition \ref{def-delta-intracting} of $\delta$-interacting, we observe that if $0<\delta^{\prime}<\delta$, then any $\delta^{\prime}$-interacting family is also a $\delta$-interacting family.
     Based  on the choice \eqref{eq:3.1} and note that the number of bubbles is f{}inite, by shrinking $\epsilon$, we can make further simplif{}ication that
$$
         \delta=\min\Big(\lambda_{j},\frac{1}{\lambda_{j}},\frac{1}{\lambda_{j}|z_{j}|^{2}}\Big)
 $$
     for any $j\neq \nu$.
     \vskip0.06in
     \noindent{\bf Claim 1.}
      For any $1\le j < \nu$ such that $\lambda_j < 1$ or $\abs{z_j}>2R$, we have  $\epsilon U>
       U_j$ in $B(0,R)$.

  \noindent {\bf Proof of the claim 1.} Note that the minimum of $U$ is on the boundary $\{|x|=R\}$, and the maximum of $U_{j}$ is at the center $z_{j}$, so we only need to verify
     \begin{equation}\label{eq:3.2}
         \bigg(\frac{\lambda_{j}}{1+\lambda_{j}^{2}|x-z_{j}|^{2}}  \bigg)^{\frac{N-2}{2}}<\epsilon
         \bigg(\frac{1}{1+R^{2}} \bigg)^{\frac{N-2}{2}} \text{\quad for \quad}  |x|\leq R,
     \end{equation}
     or even more strongly
     \begin{equation}\label{eq:3.3}
         \lambda_{j}^{\frac{N-2}{2}}<\epsilon \Big( \frac{1}{1+R^{2}} \Big)^{\frac{N-2}{2}}.
     \end{equation}
   We divide the proof into the following two cases.
     \vskip0.08in
    {\bf Case 1:} $\lambda_{j}<1$. So $\delta =
     \min\big(\lambda_{j},\frac{1}{\lambda_{j}},\frac{1}{\lambda_{j}|z_{j}|^{2}}\big)=\min\big(\lambda_{j},\frac{1}{\lambda_{j}|z_{j}|^{2}}\big)$.
    If $\frac{1}{\lambda_{j}|z_{j}|^{2}}\geq\lambda_{j}=\delta$, then
    \begin{align}
    \eqref{eq:3.3}\iff
    \epsilon ^{16-\frac{2}{N-2 }}=\delta=\lambda_{j}<\epsilon ^{\frac{2}{N-2 }}\cdot \frac{1}{1+R^{2}}
    \approx\epsilon ^{4+\frac{2}{N-2 }},
    \end{align}
    where the last approximation ``$\approx$'' is from the fact that $$\epsilon \to0\implies R\to\infty\implies
    1+R^{2}\approx R^{2}=\epsilon ^{-4}.$$
     Thus it suf{}f{}ices to compare  the exponent $16-\frac{2}{N-2}$ and $4+\frac{2}{N-2}+4$. Note that  $16-\frac{2}{N-2 }\geq 16-2=14>6\geq 4+\frac{2}{N-2}$ implies that
$$
     \epsilon^{16-\frac{2}{N-2}}\ll
    \epsilon^{4+\frac{2}{N-2 }}\text{\quad as \quad}\epsilon \ll 1\implies\eqref{eq:3.3}  \implies U_{j}\ll\epsilon  U\text{\quad as \quad}\epsilon \ll 1.
 $$
    If $\lambda_{j}>\frac{1}{\lambda_{j}|z_{j}|^{2}}=\delta$, then $|z_{j}|>\frac{1}{\lambda_{j}}>1$.

When  $|z_{j}|\geq2R\geq2|x|$, then $|z_{j}-x|\geq|z_{j}|-|x|>\frac{1}{2}|z_{j}|$,
  $$
    \aligned
    \bigg(\frac{\lambda_{j}}{1+\lambda_{j}^{2}|x-z_{j}|^{2}}
    \bigg)^{\frac{N-2 }{2}}&<\bigg(\frac{1}{\lambda_{j}^{-1}+\frac{1}{4}
    \lambda_{j}|z_{j}|^{2}}\bigg)^{\frac{N-2 }{2}}\leq\bigg(\frac{1}{0+\frac{1}{4}\lambda_{j}|z_{j}|^{2}}
    \bigg)^{\frac{N-2 }{2}}=(4\delta)^{\frac{N-2 }{2}}.
    \endaligned
 $$
    It follows that
  $$
    \aligned
    \eqref{eq:3.2} (4\delta)^{\frac{N-2 }{2}}<\epsilon
    \bigg(\frac{1}{1+R^{2}}\bigg)^{\frac{N-2 }{2}}&\iff \delta^{-1}>4\epsilon ^{-\frac{2}{N-2 }}(1+R^{2})\\
    &\iff \epsilon ^{\frac{2}{N-2 }}\epsilon^{-(16-\frac{2}{N-2 })}>4(1+\epsilon ^{-4})\\
    &\iff\epsilon^{-(16-\frac{4}{N-2 })}>4(1+\epsilon ^{-4}).
    \endaligned
   $$
    Notice $(16-\frac{4}{N-2 })\geq (16-4) >4$, thus we have \eqref{eq:3.2} and therefore $U_{j}<\epsilon U$ for $\epsilon \ll1$.

When  $|z_{j}|<2R$, then  $2R\lambda_{j}|z_{j}|>\lambda_{j}|z_{j}|^{2}
    =\frac{1}{\delta}$, thus $|\lambda_{j}z_{j}-\lambda_{j}x|\geq \lambda_{j}|z_{j}|
    -\lambda_{j}|x|\geq\frac{1}{2\delta R}-R$,
$$
    \aligned \bigg(\frac{\lambda_{j}}{1+\lambda_{j}^{2}|x-z_{j}|^{2}}\bigg)^{\frac{N-2 }{2}}
    &<\bigg(\frac{1}{1+|\lambda_{j}z_{j}-\lambda_{j}x|^{2}}\bigg)^{\frac{N-2 }{2}} <\bigg(\frac{1}{0+|\frac{1}{2\delta R}-R|^{2}}\bigg)^{\frac{N-2 }{2}},\endaligned
 $$
   then \begin{equation}\label{figa218note2}
  \aligned \eqref{eq:3.2}\ \impliedby\ &\ \bigg(\frac{1}{0+|\frac{1}{2\delta R}-R|^{2}}\bigg)^{\frac{N-2 }{2}}
   <\epsilon \bigg(\frac{1}{1+R^{2}}\bigg)^{\frac{N-2}{2}}\\
  \ \iff &\ \Big|\frac{1}{2\delta R}-R\Big|^{2}
   >\epsilon ^{-\frac{2}{N-2 }}(1+R^{2})\\
  \ \iff &\ \Big(\frac{1}{2}\cdot\epsilon ^{-(16-\frac{2}{N-2}-2)}-\epsilon ^{-2} \Big)^{2}>(\epsilon ^{-\frac{2}{N-2 }}+\epsilon ^{-\frac{2}{N-2 }-4}).\endaligned
    \end{equation}
    Thus it is suf{}f{}ice to compare the exponent $2(16-\frac{2}{N-2}-2)$ and $\frac{2}{N-2 }+4$.
    Since
    $2(16-\frac{2}{N-2}-2)\geq 2(16-2-2)>6\geq\frac{2}{N-2}+4$, thus we have \eqref{eq:3.2} and therefore $U_{j}<\epsilon U$ for $\epsilon \ll1$.
     \vskip0.08in
   {\bf Case 2:} $|z_{j}|>2R$. If $\lambda_{j}<1$, then  it returns  to  the Case 1. Therefore, we  assume $\lambda_{j}\geq 1$. It implies that $\delta =
     \min\big(\lambda_{j},\frac{1}{\lambda_{j}},\frac{1}{\lambda_{j}|z_{j}|^{2}}\big)=\min\big(\frac{1}{\lambda_{j}},\frac{1}{\lambda_{j}|z_{j}|^{2}}\big)$, then we have that
  $$|z_{j}|>2R\geq 2 \implies \frac{1}{\lambda_{j}}>\frac{1}{4\lambda_{j}}>
    \frac{1}{\lambda_{j}|z_{j}|^{2}}\implies  \frac{1}{\lambda_{j}|z_{j}|^{2}}=\delta , $$
   $$ |z_{j}|>2R\geq 2|x|\implies |z_{j}-x|\geq |z_{j}|-|x|>\frac{1}{2}|z_{j}|. $$
 then
 $$
    \aligned \bigg(\frac{\lambda_{j}}{1+\lambda_{j}^{2}|x-z_{j}|^{2}}  \bigg)^{\frac{N-2}{2}}= \bigg(\frac{1}{\lambda_{j}^{-1}+\lambda_{j}|x-z_{j}|^{2}}
    \bigg)^{\frac{N-2 }{2}}
    <\bigg(\frac{1}{0+\frac{1}{4}\lambda_{j}|z_{j}|^{2}}  \bigg)^{\frac{N-2 }{2}}=(4\delta)^{\frac{N-2 }{2}}.
    \endaligned
 $$
On the other hand,
    \begin{equation}\label{figa218note3}
    \aligned \eqref{eq:3.2} (4\delta)^{\frac{N-2 }{2}}<
    \epsilon \bigg(\frac{1}{1+R^{2}}\bigg)^{\frac{N-2 }{2}}
     \iff\epsilon^{-(16-\frac{4}{N-2 })}>4(1+\epsilon ^{-4}).
    \endaligned
    \end{equation}
    Notice $16-\frac{4}{N-2 }\geq 16-4=12>4$, thus we have \eqref{eq:3.2} and therefore $U_{j}<\epsilon U$ for $\epsilon\ll1$.
    \noindent{\bf Claim 2.}
       $\dfrac{\sup_{B(0,R)}U_j}{\inf_{B(0,R)}U_j}\leq 1+\epsilon$ \ \ \textup{for any}  $1\leq j<\nu $ \textup{such that} $\lambda_{j}\leq1$.

   \noindent {\bf Proof of the claim 2.}
   Since
       \begin{align}
        U_{j}=(N(N-2 ))^{\frac{N-2 }{2}} \bigg(\frac{\lambda_{j}}{1+\lambda_{j}^{2}|x-z_{j}|^{2}}
        \bigg)^{\frac{N-2 }{2}},	
       \end{align}
       $|z_{j}-x|\leq |z_{j}|+R$ and   $|z_{j}-x|\geq \big||z_{j}|-R\big|$, we have
       \begin{align}
           \dfrac{\sup_{B(0,R)}U_j}{\inf_{B(0,R)}U_j}\leq  \bigg( \frac{1+\lambda_{j}^{2}
           (|z_{j}|+R)^2}{1+\lambda_{j}^{2} (|z_{j}|-R)^2} \bigg)^{\frac{N-2 }{2}}=\bigg(1
           +\frac{4\lambda_{j}^{2}|z_{j}|R}{1+\lambda_{j}^{2} (|z_{j}|-R)^2}
           \bigg)^{\frac{N-2 }{2}}.
       \end{align}
      We claim that
      $(1+x)^{\frac{N-2 }{2}}\leq 1+C(N)x $ as   $x\in[0,1].$
     Indeed, let $h(x)=cx+1-(1+x)^{\frac{N-2 }{2}}$, then
     $$h^{\prime}(x)=c-\frac{N-2 }{2}(1+x)^{\frac{N-4}{2}}.$$
Assume $x\in[0,1]$. If $N=3$  then $ 2^{-\frac{1}{2}}\leq
        (1+x)^{\frac{N-4}{2}}\leq 1, $ and if $
          N\geq4$, we see that $1\leq (1+x)^{\frac{N-4}{2}} \leq 2^{\frac{N-4}{2}}$.
Therefore, we just need to  choose
  \begin{align}
  c=C(N)\coloneqq\begin{cases}
   \frac{N-2 }{2} \cdot 1,\quad
   &N=3,\\
   \frac{N-2}{2}\cdot 2^{\frac{N-4}{2}},\quad &N\geq 4,
  \end{cases}
  \end{align}
it follows that $(1+x)^{\frac{N-2 }{2}}\leq 1+C(N)x$ as  $x\in[0,1].$
       Next we show
     $$\frac{4\lambda_{j}^{2}|z_{j}|R}{1+\lambda_{j}^{2} (|z_{j}|-R)^2} \to
     0 \text{\quad as\quad}\delta\to 0.$$
  Indeed, since $\lambda_{j}\leq 1$, so
  $\delta=\min\big(\lambda_{j},\frac{1}{\lambda_{j}},
  \frac{1}{\lambda_{j}|z_{j}|^{2}}\big)=\min\big(\lambda_{j},
   \frac{1}{\lambda_{j}|z_{j}|^{2}}\big)$. If
     $ \frac{1}{\lambda_{j}|z_{j}|^{2}}\geq
    \lambda_{j}=\delta$, then $\lambda_{j}|z_{j}|\leq 1$, thus
       \begin{align}
           \frac{4\lambda_{j}^{2}|z_{j}|R}{1+\lambda_{j}^{2} (|z_{j}|-R)^2}
            \leq \frac{4\lambda_{j}R}{1+0}
=4\delta R \approx  \epsilon ^{16-\frac{2}{N-2 }}\cdot \epsilon ^{-2}  =\epsilon ^{14-\frac{2}{N-2 }}\to 0 \text{\quad as \quad} \epsilon \to 0,
       \end{align}
       since $14-\frac{2}{N-2 }\geq 12>0$. If $\lambda_{j}>\frac{1}{\lambda_{j}|z_{j}|^{2}}=\delta$, then $1\leq \lambda_{j}|z_{j}|\leq |z_{j}|$,
       \begin{equation}\label{eq:3.4}
           \frac{4\lambda_{j}^{2}|z_{j}|R}{1+\lambda_{j}^{2} (|z_{j}|-R)^2} \leq
            \frac{4\lambda_{j}^{2}|z_{j}|R}{0+\lambda_{j}^{2} (|z_{j}|-R)^2}
            = \frac{4}{\frac{(|z_{j}|-R)^2}{|z_{j}|R}}
            \eqqcolon\dfrac{4}{L}, 
       \end{equation}
       where
       \begin{align}
       L&=  \bigg( \frac{|z_{j}-R|}{|z_{j}|^{\frac{1}{2}} R^{\frac{1}{2}}}\bigg)^{2}
       =\bigg[\bigg(\frac{|z_{j}|}{R} \bigg)^{\frac{1}{2}}-\bigg( \frac{R}{|z_{j}|}
        \bigg)^{\frac{1}{2}}   \bigg]^{2}.
       \end{align}
Recall that $\delta=\frac{1}{\lambda_{j}|z_{j}|^{2}}\geq\frac{1}{|z_{j}|^2}$, thus
    $$\frac{|z_{j}|}{R}\geq\frac{\delta^{-\frac{1}{2}}}{R}=\epsilon^{-8+\frac{1}{N-2}}\cdot
    \epsilon^{2}=\epsilon^{-6+\frac{1}{N-2}}
    \to\infty \text{\quad as \quad} \epsilon \to 0,$$
which implies $L\to \infty$ as $\epsilon
       \to 0$. Therefore,  by \eqref{eq:3.4}
       \begin{align}
            \frac{4\lambda_{j}^{2}|z_{j}|R}{1+\lambda_{j}^{2} (|z_{j}|-R)^2} \to 0
            \text{\quad as \quad} \epsilon\to 0.
       \end{align}
       Thus, if $\epsilon$ is suf{}f{}iciently small,
       $$ \frac{\sup_{B(0,R)}U_j}{\inf_{B(0,R)}U_j}\leq 1+C(N)\frac{4\lambda_{j}^{2}|z_{j}|R}{1+\lambda_{j}^{2} (|z_{j}|-R)^2}\approx 1+\epsilon^{6-\frac{1}{N-2}} \ll 1 + \epsilon, $$
       for any $1\le j < \nu$ such that $\lambda_j\le 1$.
    \vskip0.08in
       Let $G\subset \{1,\ldots,\nu-1\}$ be the set of indices $j$ such that $\lambda_j>1$ and $\abs{z_j}<2R$.
       For any $j\in G$,  by the intermediate value theorem, there exists  $R_j\in(0,\infty)$  such that
       $$\epsilon \left(\frac1{1+R^2}\right)^{\frac{N-2 }2}=\left(\frac{\lambda_j}{1+\lambda_j^2\abs{R_j}^2}
       \right)^{\frac{N-2 }2}.$$

       \noindent{\bf Claim 3.}
       If $\epsilon\ll 1$, then $R_{j}\lesssim \epsilon^{2}. $

    \noindent {\bf Proof of the claim 3.} Since \begin{align}
    \epsilon \bigg( \frac{1}{1+R^2} \bigg)^{\frac{N-2 }{2}}=\bigg( \frac{\lambda_{j}}{1+\lambda_{j}^2|R_{j}|^{2}} \bigg)^{\frac{N-2 }{2}}
    \implies\quad& 1+\lambda_{j}^2|R_{j}|^{2}=\epsilon ^{-\frac{2}{N-2 }}(1+R^{2})\lambda_{j}\\
    \implies\quad& \lambda_{j}^2|R_{j}|^{2}=\epsilon ^{-\frac{2}{N-2 }}(1+R^{2})\lambda_{j}-1\\
    \implies\quad& |R_{j}|^{2}=\frac{\epsilon ^{-\frac{2}{N-2 }}(1+R^{2})}{\lambda_{j}}-\frac{1}{\lambda_{j}^{2}}\lesssim \frac{\epsilon ^{-\frac{2}{N-2 }}R^{2}}{\lambda_{j}}.
    \end{align}
    By the setting $\lambda_j> 1$ and $|z_j|<2R$, so  $\delta =\min\big(\lambda_{j},\frac{1}{\lambda_{j}},\frac{1}{\lambda_{j}|z_{j}|^{2}}\big)=\min\big( \frac{1}{\lambda_{j}},\frac{1}{\lambda_{j}|z_{j}|^{2}}\big)$.
     \vskip0.08in
  {\bf Case 1:}  $|z_{j}|\geq1 \implies \frac{1}{\lambda_{j}}\geq\frac{1}{\lambda_{j}|z_{j}|^{2}}\implies\frac{1}{\lambda_{j}|z_{j}|^{2}}=\delta$, i.e., $\frac{1}{\lambda_{j}}= |z_{j}|^{2}\delta$, therefore
        \begin{align}
        |R_{j}|^{2}\lesssim \epsilon ^{-\frac{2}{N-2 }} R^{2} |z_{j}|^{2}\delta \lesssim  \epsilon ^{-\frac{2}{N-2 }} R^{2} R^{2}\delta \approx \epsilon ^{-\frac{2}{N-2 }-4-4+(16-\frac{2}{N-2 })}
        \approx  \epsilon ^{8-\frac{4}{N-2 }}\lesssim \epsilon ^{4}.
        \end{align}
     \vskip0.08in
  {\bf Case 2:}    $|z_{j}|<1 \implies  \frac{1}{\lambda_{j}|z_{j}|^{2}} >
    \frac{1}{\lambda_{j}} \implies \frac{1}{\lambda_{j}} =\delta $, it follows that
        \begin{align}
        |R_{j}|^{2}\lesssim \epsilon ^{-\frac{2}{N-2 }}R^{2}\delta \approx \epsilon ^{-\frac{2}{N-2 }
        -4+(16-\frac{2}{N-2 })}\approx \epsilon ^{12-\frac{4}{N-2 }} \lesssim \epsilon ^{8}.
        \end{align}
        It is worth mentioning that all the above ``$\lesssim$'' hold as $\epsilon\ll1$. In a word, $|R_{j}|^{2}\lesssim \epsilon ^{4}$ as $\epsilon\ll 1$.
  Thus, as a consequence of the def{}inition of $R_j$, for any $j\in G$ it holds that, in $B(0,R)\setminus B(z_{j},R_j)$,
    $$            \frac{\epsilon U}{U_j}=\epsilon\Big(\frac{1}{1+|x|^{2}}\Big)^{\frac{N-2 }2} \Big/\Big(\frac{\lambda_j}{1+\lambda_j^2\abs{x-z_{j}}^2}\Big)^{\frac{N-2}{2}}   \geq \epsilon \Big(\frac1{1+R^2}\Big)^{\frac{N-2 }2}\Big/\Big(\frac{\lambda_j}{1+\lambda_j^2\abs{R_j}^2}
       \Big)^{\frac{N-2 }2}=1,
 $$
       i.e., $\epsilon U\ge U_{j}$ in $B(0,R)\setminus B(z_{j},R_j)$. Lastly we estimate the order of $\int_{B(0,\epsilon  R)^{c}}| U|^{p+1}\,dx$ and $\int_{B(0,\epsilon  R)^{c}}| \nabla U|^{2}\,dx$ with respect to $\epsilon $, and show that the statement \ref{it:localization1} is hold. Firstly,
       \begin{align}\label{decay-of-L(p+1)norm}
       \int_{B(0,\epsilon  R)^{c}}U^{p+1} \,dx
       \approx \int_{|x|>\epsilon ^{-1}}
        \bigg(\frac{1}{1+|x|^{2}}\bigg)^{\frac{N-2}{2}\cdot \frac{2N}{N-2 }}
        \,dx\approx \int_{\epsilon ^{-1}}^{\infty} \frac{1}{r^{2N}} r^{N-1}
        \,dr\approx \epsilon ^{N}.
       \end{align}
      Similarly
    \begin{align}
     \int_{B(0,\epsilon  R)^{c}}|\nabla U|^{s} \,dx \approx \int_{|x|>\epsilon^{-1}}
    \bigg(\frac{|x|}{(1+|x|^{2})^{\frac{N-2 }{2}+1}}\bigg)^{s}\,dx
    \approx\int_{\epsilon^{-1}}^{\infty}\frac{1}{r^{(N-1)s}}r^{N-1}\,dr\approx \epsilon ^{(N-1)s-N},
    \end{align}
    and
    \begin{align}
    \epsilon ^{(N-1)s-N}\leq\epsilon\iff(N-1)s-N\geq 1\iff s\geq \frac{N+1}{N-1}.
    \end{align}
  Particularly,
    \begin{align}
     \int_{B(0,\epsilon  R)^{c}}|\nabla U|^{2} \,dx \lesssim \epsilon,
    \end{align}
    then combining with the proof of Lemma 3.9 in \cite{figalli}, we have the statement \ref{it:localization1}. For the construction of the function $\Phi_{i}$ and other statements, the readers can refer to Lemma 3.9 in \cite{figalli}.
    \end{proof}
     \vskip0.05in
\begin{remark}\label{power of epilson}
As we have seen, in the above proof we have chosen  $\delta=\varepsilon^{16-\frac{2}{N-2}}$. However,   if we replace it by   $\delta=\varepsilon^{\eta-\frac{2}{N-2}}$, where $\eta\geq16$ is any f{}ixed number, then the above arguments will also hold. Thus in the later proof we can choose the appropriate $\eta$ according to our needs,  this observation will help us to prove the Lemma \ref{prop:interaction_and_coef}.
     \end{remark}

\vskip0.18in
    To bound the double integral terms, such as the term $\int_{\R^{N}} (I_\mu\ast h_{1}^{\alpha_{1}}|\rho|^{\beta_{1}} ) h_{2}^{\alpha_{2}}|\rho|^{\beta_{2}} \,dx $, with some powers of $\|\nabla \rho\|_{L^{2}}$, where $\alpha_{i}+\beta_{i}=\tilde{p}+1$, $i=1,2$, we need the following  Lemma.
    \begin{lemma}\label{double-integral-estimate}
    For $i=1,2,$ let  $\alpha_{i},\beta_{i}> 0$ satisfy $\alpha_{i}+\beta_{i}=\tilde{p}+1$, then for any $h_{i}\in \mathcal{D}^{1,2}$ and $\rho\in\mathcal{D}^{1,2}$, there holds
   $$
        \int_{\R^{N}} (I_\mu\ast h_{1}^{\alpha_{1}}|\rho|^{\beta_{1}} ) h_{2}^{\alpha_{2}}|\rho|^{\beta_{2}} \,dx \lesssim_{h_{1},h_{2}} \|\nabla \rho\|_{L^{2}}^{\beta_{1}+\beta_{2}}
 $$
    \hbox {if} $\tilde{p}+1>\max(\alpha_{1},\beta_{1})$ and $\tilde{p}+1>\max(\alpha_{2},\beta_{2})$.
    \begin{proof}
      Let $s,t\in(1,+\infty)$ be  two constants  whose values are to be determined, such that $1/s+1/t=2(\tilde{p}+1)/(p+1)$.  Then by using the HLS inequality \eqref{eq:1.9}, we know that
      \begin{equation}\label{eq:3.5}
          \int_{\R^{N}} (I_\mu\ast h_{1}^{\alpha_{1}}|\rho|^{\beta_{1}} ) h_{2}^{\alpha_{2}}|\rho|^{\beta_{2}} \,dx \lesssim \|h_{1}^{\alpha_{1}}|\rho|^{\beta_{1}}\|_{L^{s}}\| h_{2}^{\alpha_{2}}|\rho|^{\beta_{2}} \|_{L^{t}}.
      \end{equation}
    Next we apply the H\"older's inequality to  control the above two terms.  Since $h_{i},\rho\in \mathcal{D}^{1,2}$,  we can only control the terms related to  $h_{i}$ or $\rho$ by their  $L^{p+1}$ norms. More precisely, we have that
     \begin{equation}\label{eq:3.6}
     \aligned
     \|h_{1}^{\alpha_{1}}|\rho|^{\beta_{1}}\|_{L^{s}}&=\Big(\int_{\R^{N}} |h_{1}|^{\alpha_{1}s}|\rho|^{\beta_{1}s} \,dx\Big)^{\frac{1}{s}} \\
     &\leq  \Big(\int_{\R^{N}} |h_{1}|^{\alpha_{1}s\cdot\frac{p+1}{\alpha_{1}s}}\,dx\Big)^{\frac{\alpha_{1}s}{p+1}\cdot\frac{1}{s}}  \Big(\int_{\R^{N}} |\rho|^{\beta_{1}s\cdot\frac{p+1}{\beta_{1}s}} \,dx\Big)^{\frac{\beta_{1}s}{p+1}\cdot\frac{1}{s}}\\
     &=\|h_{1}\|_{L^{p+1}}^{\alpha_{1}} \|\rho\|_{L^{p+1}}^{\beta_{1}} \lesssim  \|\nabla h_{1}\|_{L^{2}}^{\alpha_{1}}\|\nabla \rho\|_{L^{2}}^{\beta_{1}}\lesssim_{h_{1}} \|\nabla \rho\|_{L^{2}}^{\beta_{1}}.
     \endaligned
     \end{equation}
 Here only the f{}irst inequality  requires the  H\"older inequality and hence needs the following conditions:
  $$
         \frac{p+1}{\alpha_{1}s}>1,\  \frac{p+1}{\beta_{1}s}>1,\ \frac{\alpha_{1}s}{p+1}+\frac{\beta_{1}s}{p+1}=1.
     $$
 Obviously, the last one is equivalent to $s=(p+1)/(\tilde{p}+1)=(2N-\mu)/2N>1$, thus the f{}irst two conditions are equivalent to $\tilde{p}+1>\max(\alpha_{1},\beta_{1})$. Then the relation $1/s+1/t=2(\tilde{p}+1)/(p+1)$ implies that $t=(p+1)/(\tilde{p}+1)=s$. Applying the H\"older inequality with  the exponent $(p+1)/\alpha_{2}t=(\tilde{p}+1)/\alpha_{2}$ and the Sobolev inequality \eqref{eq:1.2}, we get,   if $\tilde{p}+1>\max(\alpha_{2},\beta_{2})$, that
     \begin{equation}\label{eq:3.7}
     \aligned
     \|h_{2}^{\alpha_{2}}|\rho|^{\beta_{2}}\|_{L^{t}}&=\Big(\int_{\R^{N}} |h_{2}|^{\alpha_{2}t}|\rho|^{\beta_{2}t} \,dx\Big)^{\frac{1}{t}} \\
     &\leq  \Big(\int_{\R^{N}} |h_{2}|^{\alpha_{2}t\cdot\frac{p+1}{\alpha_{2}t}}\,dx\Big)^{\frac{\alpha_{2}t}{p+1}\cdot\frac{1}{t}}  \Big(\int_{\R^{N}} |\rho|^{\beta_{2}t\cdot\frac{p+1}{\beta_{2}t}} \,dx\Big)^{\frac{\beta_{2}t}{p+1}\cdot\frac{1}{t}}\\
     &=\|h_{2}\|_{L^{p+1}}^{\alpha_{2}} \|\rho\|_{L^{p+1}}^{\beta_{2}} \lesssim  \|\nabla h_{2}\|_{L^{2}}^{\alpha_{1}}\|\nabla \rho\|_{L^{2}}^{\beta_{2}}\lesssim_{h_{2}} \|\nabla \rho\|_{L^{2}}^{\beta_{2}}.
     \endaligned
     \end{equation}
Further, by combining \eqref{eq:3.6} and \eqref{eq:3.7}, we know that
   $$
        \int_{\R^{N}} (I_\mu\ast h_{1}^{\alpha_{1}}|\rho|^{\beta_{1}} ) h_{2}^{\alpha_{2}}|\rho|^{\beta_{2}} \,dx \lesssim_{h_{1},h_{2}} \|\nabla \rho\|_{L^{2}}^{\beta_{1}+\beta_{2}},
    $$
whenever  $\tilde{p}+1>\max(\alpha_{1},\beta_{1})$ and $\tilde{p}+1>\max(\alpha_{2},\beta_{2})$.
    \end{proof}
    \end{lemma}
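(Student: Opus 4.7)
The plan is to combine three classical tools in sequence: the Hardy-Littlewood-Sobolev inequality \eqref{eq:1.9}, H\"older's inequality, and the Sobolev embedding \eqref{eq:1.2}. The strict strict inequalities $\tilde p+1>\max(\alpha_i,\beta_i)$ are precisely the room needed to make the intermediate H\"older step legitimate; this is the one condition I expect to dictate the choice of exponents throughout.

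First I would apply HLS to the left-hand side by viewing $f\coloneqq h_1^{\alpha_1}|\rho|^{\beta_1}$ and $g\coloneqq h_2^{\alpha_2}|\rho|^{\beta_2}$ as an $L^s$-$L^t$ pair. The natural choice of $(s,t)$ is driven by the desire to land on the Sobolev-critical exponent $p+1=2^*$ after one more H\"older split. Since I want $\alpha_i s$ and $\beta_i s$ to be bounded by $p+1$ with equality attainable in the limiting case $\alpha_i s=p+1$ when $\beta_i=0$, the forced choice is $s=t=(p+1)/(\tilde p+1)$. A direct check confirms this is HLS-admissible: $\frac{1}{s}+\frac{1}{t}=\frac{2(\tilde p+1)}{p+1}=\frac{2N-\mu}{N}=2-\frac{\mu}{N}$.

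Next I would split each $L^s$- and $L^t$-norm by H\"older with dual exponents $\frac{p+1}{\alpha_i s}$ and $\frac{p+1}{\beta_i s}$ (for $i=1,2$). These add to $1$ because $\alpha_i+\beta_i=\tilde p+1=(p+1)/s$, and each is genuinely greater than $1$ exactly when $\max(\alpha_i,\beta_i)<\tilde p+1$, which is the assumption. This produces the clean product
\[
\|h_i^{\alpha_i}|\rho|^{\beta_i}\|_{L^s}\le \|h_i\|_{L^{p+1}}^{\alpha_i}\,\|\rho\|_{L^{p+1}}^{\beta_i}.
\]
Finally, the Sobolev inequality \eqref{eq:1.2} gives $\|\rho\|_{L^{p+1}}\lesssim\|\nabla\rho\|_{L^2}$, and the $h_i$-factors are absorbed into the implicit constant (which is allowed to depend on $h_1,h_2$), yielding the target bound with total $\rho$-power $\beta_1+\beta_2$.

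There is no real obstacle here beyond the bookkeeping: once the distinguished choice $s=t=(p+1)/(\tilde p+1)$ is identified from the Sobolev-critical target, everything else is forced, and the strict inequalities in the hypothesis are exactly the nondegeneracy conditions needed for the two H\"older applications. The only subtle point worth double-checking is the borderline behavior: if one of $\alpha_i$ or $\beta_i$ equals $\tilde p+1$ (equivalently, the other vanishes), one of the H\"older exponents becomes infinite and the conclusion would have to be re-derived directly; the strict hypothesis in the statement rules this case out, which is presumably why it is imposed.
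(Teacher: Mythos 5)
Your proposal is correct and matches the paper's proof essentially step for step: HLS with the forced choice $s=t=(p+1)/(\tilde p+1)$, then H\"older with exponents $\tfrac{p+1}{\alpha_i s}$ and $\tfrac{p+1}{\beta_i s}$, then Sobolev, with the strict hypotheses $\tilde p+1>\max(\alpha_i,\beta_i)$ serving exactly to keep the H\"older exponents finite. Your closing remark about the borderline case even anticipates Remark \ref{rmk1-double-integral-estimate} in the paper.
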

\begin{remark}\label{rmk1-double-integral-estimate}
The above Lemma also holds for $\alpha_{i},\beta_{i}\geq 0$ satisfying $\alpha_{i}+\beta_{i}=\tilde{p}+1$. In fact, if  $\alpha_{1}=0$ or $\beta_{1}=0$, then we may take $s=(p+1)/(\tilde{p}+1)>1$, it follows that  the f{}irst term $\|h_{1}^{\alpha_{1}}|\rho|^{\beta_{1}}\|_{L^{s}}$ in the right hand side of \eqref{eq:3.5}  is equal to $\|h_{1}\|_{L^{p+1}}^{\alpha_{1}}\|\rho\|_{L^{p+1}}^{\beta_{1}}$, which is bounded by $\|\nabla\rho\|_{L^{2}}^{\beta_{1}}$ due to the  Sobolev inequality. It is worth mentioning that the domination of this term in such case only needs the precondition  $\tilde{p}+1>0\iff \mu<2N$, which is always true since our assumption  is $\mu\in(0,N)$. Similarly, the second term can be controlled   by $\|\nabla\rho\|_{L^{2}}^{\beta_{2}}$ if $\alpha_{2}=0$ or $\beta_{2}=0$, with the same precondition $\tilde{p}+1>0$.
 \end{remark}

\vskip0.12in

\begin{remark}\label{rmk2-double-integral-estimate}
In particular, if we  let $\sigma\coloneqq\sum_{i=1}^{\nu}\alpha_{i}\tilde{U}[z_{i},\lambda_{i}]$,   then there holds
$$
\int_{\R^N}(I_\mu\ast\sigma^{\tilde{p}+1-m}|\rho|^{m})\sigma^{\tilde{p}+1-n}|\rho|^{n}  \,dx \lesssim \|\nabla\rho\|_{L^2}^{m+n},
 $$
for  $ m,n=0,1,2,3, ...$ satisfying
 \begin{equation}\label{eq:3.8}
     \tilde{p}+1>m \ \ \text{and} \ \ \tilde{p}+1>n.
 \end{equation}
On the other hand, if $m=\tilde{p}+1$ or $n=\tilde{p}+1$, then   the corresponding  condition reduces to $\tilde{p}+1>0$. Here we list an useful table, which translates  the conditions on $\tilde{p}$ to the equivalent conditions on $\mu$.\\
\begin{table}[h]
  \centering
  \begin{tabular}{c@{$\ \ \iff  \ \ $}l@{ }l p{10em}}
  \hline
  $\tilde{p}+1>0$   & $\mu<2N$  \\
  \hline
  $\tilde{p}+1>1$  &       $\mu<N+2$   \\
  \hline
  $\tilde{p}+1>2$    &     $\mu<4$ \\
  \hline
  $\tilde{p}+1>3$    &     $\mu<6-N$ \\
  \hline
\end{tabular}
    \caption{\small Equivalent condition between $\tilde{p}$ with $\mu$}
    \label{tab:Equivalent with-p wan-mu}
\end{table}
    \end{remark}

\begin{remark}\label{rmk3-double-integral-estimate}
     In the proof, the reason of unique choice $s=t$ in the using of HLS inequality, is that the inside exponent of convolution and the outside exponent of convolution are all equals to $\tilde{p}+1$. In the rest of this paper, we will directly use $s=t=(p+1)/(\tilde{p}+1)$ when encounter similar terms. If one of these exponent are not $\tilde{p}+1$, will cause dif{}f{}iculty, which need further steps to overcome.
\end{remark}

    \vskip0.08in
    We now turn to prove  the interaction integral estimate.  Denote $\tilde{U}_i\coloneqq\tilde{U}[z_i,\lambda_i]$, $1\le i\le\nu$.
     \begin{lemma}\label{prop:interaction_and_coef}
      Let $3\leq N\leq 5$, $(N+2)/2<\mu<\min(N,4)$ and $\nu\in\N^{+}$. For any $\hat\varepsilon>0$ there exists $\delta=\delta(N,\mu,\nu,\hat\varepsilon)>0$
        such that the following statement holds.
        Let $u=\sigma+\rho\coloneqq\sum\limits_{i=1}^\nu\alpha_i\tilde{U}_i+\rho$, where the family $\{(\alpha_i,\tilde{U}_i)\}_{1\le i\le\nu}$
        is $\delta$-interacting, and $\rho$ has the bound $\|\nabla\rho\|_{L^2}\le 1$. Assume that $\rho$ is orthogonal to  $\tilde{U}_{i}$ , $\partial_\lambda\tilde{U}_{i}$ and $\partial_{z_j}\tilde{U}_{i}$ in $\mathcal{D}^{1,2}(\R^{N})$, quivalently, $\rho$ satisf{}ies the orthogonality conditions:
 \begin{equation}\label{eq:3.9}
 \int_{\R^N}(I_\mu\ast\tilde{U_{i}}^{\tilde{p}+1} )\tilde{U_{i}}^{\tilde{p}}\rho\,dx=0,
 \end{equation}
 \begin{equation} \label{eq:3.10}
 \int_{\R^N}\big[(\tilde{p}+1)(I_{\mu}\ast\tilde{U}_{i}^{\tilde{p}}\partial_\lambda\tilde{U}_{i})\tilde{U}_{i}^{\tilde{p}}+\tilde{p}(I_{\mu}\ast\tilde{U}_{i}^{\tilde{p}+1})\tilde{U}_{i}^{\tilde{p}-1}\partial_\lambda\tilde{U}_{i}\big]\, \rho\,dx= 0,
 \end{equation}
 $$
 \int_{\R^N}\big[  (\tilde{p}+1)(I_{\mu}\ast\tilde{U}_{i}^{\tilde{p}}\partial_{z_j}\tilde{U}_{i})\tilde{U}_{i}^{\tilde{p}}+\tilde{p}(I_{\mu}\ast\tilde{U}_{i}^{\tilde{p}+1})\tilde{U}_{i}^{\tilde{p}-1}\partial_{z_j}\tilde{U}_{i} \big]\, \rho\,dx= 0 \quad\text {for any}\ \ 1\le j\le N.
 $$
 Then, for any $1\le i\le \nu$, it holds
        \begin{equation}\label{eq:3.11}
          |\alpha_i-1|\lesssim \hat\varepsilon\|\nabla\rho\|_{L^2}  +\|\nabla \rho\|_{L^{2}}^{\min(\tilde{p}, 2) }
          +\big\|\Delta u +(I_\mu\ast|u|^{\tilde{p}+1})|u|^{\tilde{p}-1}u
          \big\|_{(\mathcal{D}^{1,2})^{-1}}.
        \end{equation}
Further,  for any pair of indices $i\not=j$, it holds
        \begin{equation}\label{eq:3.12}
        \aligned   \int_{\R^N} \tilde{U}_i^{p}\tilde{U}_j
        \,dx&\lesssim \hat\varepsilon\|\nabla\rho\|_{L^2}  +\|\nabla \rho\|_{L^{2}}^{\min(\tilde{p}, 2) }
          +\big\|\Delta u +(I_\mu\ast|u|^{\tilde{p}+1})|u|^{\tilde{p}-1}u
          \big\|_{(\mathcal{D}^{1,2})^{-1}}.  \endaligned
        \end{equation}
      \end{lemma}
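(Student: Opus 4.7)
The plan is to test the residual $f(u):=-\Delta u-(I_\mu\ast|u|^{\tilde p+1})|u|^{\tilde p-1}u$ against two families of functions built from $\tilde U_i$ and the localizer $\Phi_i$ from Lemma~\ref{lem:localization}, to Taylor-expand the nonlinearity in $\rho$ around $\sigma$, and to close the resulting linear system in the unknowns $\alpha_i-1$ and $\mathcal{I}_{ij}:=\int_{\R^N}\tilde U_i^{\,p}\tilde U_j\,dx$. Throughout, the working pairing is $|\langle f(u),\varphi\rangle|\le \|f(u)\|_{(\mathcal{D}^{1,2})^{-1}}\|\nabla\varphi\|_{L^2}$.

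I would first take $\varphi=\tilde U_i$: integrating by parts, the pointwise identity \eqref{eq:2.2} combined with the orthogonality \eqref{eq:3.9} turns the linear-in-$u$ side into $c(\mu)^{1-p}\big(\alpha_i S_0+\sum_{j\ne i}\alpha_j\,\mathcal{I}_{ij}\big)$, where $S_0:=\int_{\R^N}\tilde U_i^{\,p+1}\,dx$. For the nonlinear side $\int_{\R^N}(I_\mu\ast|u|^{\tilde p+1})|u|^{\tilde p-1}u\,\tilde U_i\,dx$ I would apply Proposition~\ref{propos-1} with $l=0$ and $r\in\{\tilde p,\tilde p+1\}$ to expand both factors around $\sigma$, and then the sub-additivity estimate \eqref{eq:2.18} to split each power $\sigma^{\,q}$ into a diagonal part $\sum_j\alpha_j^{\,q}\tilde U_j^{\,q}$ plus an off-diagonal remainder pointwise controlled by products $\tilde U_j^{\,q-1}\tilde U_k$ with $j\ne k$. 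The pure $i$-th-bubble part produces $c(\mu)^{1-p}\alpha_i^{\,2\tilde p+1}S_0$; the pure $i$-th linear-in-$\rho$ piece collapses, after moving the Riesz kernel between the two factors, to a multiple of $\int(I_\mu\ast\tilde U_i^{\tilde p+1})\tilde U_i^{\tilde p}\rho\,dx$, which vanishes again by \eqref{eq:3.9}; the bubble-bubble remainders are $O(\mathcal{I}_{ij})$ by Lemma~\ref{prop:interaction_approx} and Lemma~\ref{cor:interaction_integral_localized}; and the mixed $\sigma$--$\rho$ errors all fit the template $\int(I_\mu\ast h_1^{\alpha_1}|\rho|^{\beta_1})h_2^{\alpha_2}|\rho|^{\beta_2}\,dx$ of Lemma~\ref{double-integral-estimate}, with output $\lesssim\|\nabla\rho\|_{L^2}^{\beta_1+\beta_2}$.

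Assembling both sides produces a schematic identity
\[
c(\mu)^{1-p}S_0\big(\alpha_i-\alpha_i^{\,2\tilde p+1}\big)+\sum_{j\ne i}c_{ij}\,\mathcal{I}_{ij}=O\big(\|f(u)\|_{(\mathcal{D}^{1,2})^{-1}}+\|\nabla\rho\|_{L^2}^{\min(\tilde p,2)}\big)
\]
with bounded coefficients $c_{ij}$; since $\alpha_i\approx 1$ and $2\tilde p+1>1$, the prefactor of $\alpha_i-1$ is non-degenerate. To decouple $|\alpha_i-1|$ from the $\mathcal{I}_{ij}$'s I would rerun the same computation with the localized test function $\varphi=\Phi_i\tilde U_i$: the smallness \ref{it:localization3} of $\nabla\Phi_i$ in $L^N$ produces the extra $\hat\varepsilon$ factor on every term passing through the gradient (this is the origin of $\hat\varepsilon\|\nabla\rho\|_{L^2}$ in \eqref{eq:3.11}), while properties \ref{it:localization1}, \ref{it:localization2} and \ref{it:localization4} reduce $\int\Phi_i\tilde U_i^{\,p}\tilde U_j\,dx$ to $(1+o(1))\mathcal{I}_{ij}$ for $\lambda_j\le\lambda_i$ and to $o(\mathcal{I}_{ij})$ otherwise. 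The combined system in $(\alpha_i-1,\mathcal{I}_{ij})$ is then diagonally dominant thanks to the $\delta$-interaction hypothesis, so inverting it yields both \eqref{eq:3.11} and \eqref{eq:3.12} simultaneously.

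The main obstacle is the term-by-term bookkeeping of the nonlinear expansion: relative to the pure power nonlinearity treated in \cite{figalli}, the Riesz convolution roughly doubles the number of mixed monomials in $\sigma,\rho,\tilde U_i$ that need to be estimated, and each must fall inside the admissibility range $\tilde p+1>\max(\alpha_s,\beta_s)$ of Lemma~\ref{double-integral-estimate} (cf.\ the table in Remark~\ref{rmk2-double-integral-estimate}). The lower bound $\mu>(N+2)/2$ and the upper bound $\mu<\min(N,4)$ in the hypothesis are exactly what guarantees that every such mixed term, together with the quadratic remainder $|\sigma|^{\tilde p-2}\rho^2$ from Proposition~\ref{propos-1} (active precisely when $\tilde p>2$, i.e.\ $\mu<6-N$), can be absorbed either as $\hat\varepsilon\|\nabla\rho\|_{L^2}$ through the localization or as $\|\nabla\rho\|_{L^2}^{\min(\tilde p,2)}$ through Lemma~\ref{double-integral-estimate}; verifying this case by case for the entire expansion is the bulk of the work.
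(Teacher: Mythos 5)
The expansion-and-bookkeeping portion of your plan is broadly in line with what the paper does, but the step where you actually \emph{solve} for $\alpha_i-1$ and $\mathcal{I}_{ij}:=\int\tilde U_i^p\tilde U_j$ has a real gap, and it is a gap in the key idea rather than in the details.

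You propose to obtain two independent equations at each index $i$ by testing $f(u)$ against $\tilde U_i$ and against the localized version $\Phi_i\tilde U_i$, and then to invert a ``diagonally dominant'' system. This does not work. The coefficient of $\alpha_i-\alpha_i^{2\tilde p+1}$ in the $\tilde U_i$-test is $\approx c(\mu)^{1-p}S_0$, and in the $\Phi_i\tilde U_i$-test it is $\approx c(\mu)^{1-p}(1-o(1))S_0$ by Lemma~\ref{lem:localization}\ref{it:localization1}. Likewise, for any $j$ with $\lambda_j\le\lambda_i$, the coefficient of $\mathcal I_{ij}$ changes only by a factor $(1+o(1))$ when you insert $\Phi_i$. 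So the two equations differ from each other only by $o(1)$ perturbations of the \emph{same} linear functional — they are asymptotically linearly dependent, and subtracting them gives no usable leverage on $\alpha_i-1$. In particular for the most concentrated bubble $i=1$, no $j$ has $\lambda_j>\lambda_1$, so the filtering you invoke is vacuous and the two tests are essentially identical. There is no way to recover $\alpha_1-1$ separately from $\sum_{j\ne 1}\mathcal I_{1j}$ out of these data alone.

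The paper's route uses a different pair of test functions that \emph{are} genuinely independent: $\xi\Phi_i$ with $\xi=\tilde U_i$ and $\xi=\partial_\lambda\tilde U_i$. The crucial algebraic fact, which you do not use, is that
\[
\int_{\R^N}\tilde U^{p}\,\partial_\lambda\tilde U\,dx=\partial_\lambda\Big(\tfrac{1}{p+1}\int U[0,\lambda]^{p+1}\Big)=0,
\qquad
\int_{\R^N}\tilde U^{p-1}\,\partial_\lambda\tilde U\,dx=\tfrac{2-N}{2}\int U^{p}\,dx\neq0.
\]
Thus after inserting \eqref{eq:3.31}--\eqref{eq:3.32}, the $\partial_\lambda\tilde U$ test kills the $\alpha_i-\alpha_i^{2\tilde p+1}$ coefficient outright and isolates the cross term; the $\tilde U$ test then recovers $\alpha_i-\alpha_i^{2\tilde p+1}$. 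That is a genuinely triangular $2\times2$ system, not a near-singular one. In addition, the paper reduces the ``$\mathcal I_{ij}$'' unknowns to a \emph{single scalar} $\tilde V_2(0)$ at each step, via Lemma~\ref{lem:localization}\ref{it:localization4} (the less-concentrated bubbles are essentially constant on $\{\Phi_i>0\}$), and controls the $j<i$ terms by induction on the concentration index, starting from the most concentrated bubble. Without both of these ingredients — the $\partial_\lambda$-orthogonality and the concentration-ordered induction with $\tilde V_2(0)$ — the linear system you write down has more unknowns than usable equations and the ``diagonal dominance'' claim cannot be substantiated from the $\delta$-interaction hypothesis alone.
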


      \begin{proof}
       Let $\Theta(u)\coloneqq \big\|\Delta u +(I_\mu\ast|u|^{\tilde{p}+1})|u|^{\tilde{p}-1}u
       \big\|_{(\mathcal{D}^{1,2})^{-1}}$.  We denote by $o(1)$ the quantity converging to zero as  the parameter $\delta$ goes to zero. By the proof of Lemma \ref{lem:localization} and   Remark \ref{power of epilson}, $\varepsilon=o(1)$. Similarly the notation $o(E)$  means $o(E)/E=o(1) \;( as\, \delta \to 0)$. The notation $\{f\}_{g}$ means  that the term $f$ appears  when  the condition ``$g$'' is satisf{}ied.
      \vskip0.18in
     Without loss of generality,  we assume $\{\lambda_i\}$ is  decreasing
       (i.e., $\tilde{U}_1$ is the most concentrated bubble). We assume $\delta<\frac{1}{2}$.  By def{}inition of $\delta$-interacting, $\max\limits_{1\leq i\leq \nu}|\alpha_i-1|\leq \frac{1}{2}$, namely $\alpha_i\in[\frac{1}{2}, \frac{3}{2}]$ for $1\leq i\leq \nu$. Observe that the left hand side of  $\eqref{eq:3.11}$ and $\eqref{eq:3.12}$ is bounded from above,we can assume $\Theta(u)\leq 1$.
\vskip0.18in
We will prove the estimates \eqref{eq:3.11}-\eqref{eq:3.12} by induction on the index $i=1,\ldots,\nu$ (starting from the most concentrated bubble). The proof consists of the following two steps.
In the f{}irst step, we show the result holds for $i=1$. Then we f{}ix an index $i$ ($1< i\le\nu$) and suppose the result holds for all index strictly less than $i$, we proceed to show it is true  for index $i$. However, the proofs of the two steps are the same  (see  \eqref{eq:3.29} below  for the splitting of $\tilde{V}$), therefore we deal them simultaneous.

\vskip0.18in
Fix $i\in\{1,\cdots,\nu\}$.  Since all the  quantities involved, such as  $|\alpha_{i}-1|,\int_{\R^N}\tilde{U}_i^{p}\tilde{U}_j\,dx,\|\nabla\rho\|_{L^2},
        \big\|\Delta u +(I_\mu\ast|u|^{\tilde{p}+1})|u|^{\tilde{p}-1}u
       \big\|_{\left(\mathcal{D}^{1,2}\right)^{-1}}$, are invariant
       under the action of the symmetries described in \eqref{eq:2.8}, so we can assume $\tilde{U}_i=\tilde{U}=\tilde{U}[0,1]$, namely $z_{i}=0$ and $\lambda_{i}=1$.  For notational simplicity we denote $$\alpha=\alpha_i\, ,\ \tilde{V}=\sum\limits_{j\not=i}\alpha_j \tilde{U}_j.$$
       Let $\Phi=\Phi_i$ be the bump function built in Lemma \ref{lem:localization} for a f{}ixed $\varepsilon>0$. Let $f\coloneqq -\Delta u-(I_{\mu}\ast |u|^{\tilde{p}+1})|u|^{\tilde{p}-1}u$, then since $u=\sigma+\rho$, thus $$-\Delta
       (\sigma+\rho)=(I_{\mu}\ast |\sigma+\rho|^{\tilde{p}+1})|\sigma+\rho|^{\tilde{p}-1}(\sigma+\rho)+f.$$ By the def{}inition of $ \sigma$ and the equation \eqref{eq:2.1} of $\tilde{U}_{i}$, we obtain that
      $$
           -\Delta \rho=  (I_{\mu}\ast | \sigma+\rho|^{\tilde{p}+1})|\sigma+\rho|^{\tilde{p}-1}(\sigma+\rho)-\sum_{j=1}^{\nu} \alpha_{j} (I_{\mu}\ast \tilde{U}_{j}^{\tilde{p}+1})\tilde{U}_{j}^{\tilde{p}} +f.
       $$
       Next we consider the linearized equation at $\alpha \tilde{U}$,
     $$
           \aligned
           &-\Delta \rho-(\tilde{p}+1)(I_{\mu}\ast(\alpha \tilde{U})^{\tilde{p}}\rho)(\alpha \tilde{U})^{\tilde{p}} - \tilde{p}\,(I_{\mu}\ast(\alpha \tilde{U})^{\tilde{p}+1})(\alpha \tilde{U})^{\tilde{p}-1}\rho \\
           &=(I_{\mu}\ast | \sigma+\rho|^{\tilde{p}+1})|\sigma+\rho|^{\tilde{p}-1}(\sigma+\rho)-\sum_{j=1}^{\nu} \alpha_{j} (I_{\mu}\ast \tilde{U}_{j}^{\tilde{p}+1})\tilde{U}_{j}^{\tilde{p}}\\
            &\quad - (\tilde{p}+1)(I_{\mu}\ast(\alpha \tilde{U})^{\tilde{p}}\rho)(\alpha \tilde{U})^{\tilde{p}} - \tilde{p}\,(I_{\mu}\ast(\alpha \tilde{U})^{\tilde{p}+1})(\alpha \tilde{U})^{\tilde{p}-1}\rho+f \\
           &\eqqcolon  \mathcal{A}_{1}+\mathcal{A}_{2}+\mathcal{A}_{3}+\mathcal{A}_{4}+\mathcal{A}_{5}.
           \endaligned
     $$
Then
\begin{equation}\label{eq:3.13}
-\Delta \rho + \mathcal{A}_{3} + \mathcal{A}_{4} = \mathcal{A}_{1}+\mathcal{A}_{2}+\mathcal{A}_{3}+\mathcal{A}_{4}+\mathcal{A}_{5}.
\end{equation}
Let $\xi$ be $\tilde{U}$ or $ \partial_{\lambda}\tilde{U}$, here $\partial_{\lambda}=\partial_{\lambda}\big|_{\lambda=1}$.  Then by \eqref{eq:2.6}, there holds $|\xi|\lesssim \tilde{U} $. We will test the above equation with $\xi \Phi$, and then estimate each term. Our purpose is that (see the right hand sides of \eqref{eq:3.11} and \eqref{eq:3.12}), if some terms like $C\|\nabla \rho \|_{L^{2}}^{q}$ appear  in the right hand side of the estimate, then the exponent $q$ and the quantity $C$ must satisfy either $q=1$ and $C=o(1)$ or $q>1$ and $C=constant.$ Before going  further,   we need to  decompose each term. Firstly we observe that
     $$
           \aligned
           \mathcal{A}_{1} &  =\big(I_{\mu}\ast | \sigma+\rho|^{\tilde{p}+1}\big)|\sigma+\rho|^{\tilde{p}-1}(\sigma+\rho)  \\
           & =\big(I_{\mu}\ast | \sigma+\rho|^{\tilde{p}+1}\big) \big(|\sigma+\rho|^{\tilde{p}-1}(\sigma+\rho)-\sigma^{\tilde{p}}-\tilde{p} \sigma^{\tilde{p}-1}\rho \big)\\
           &\quad  + \Big(I_{\mu}\ast \big(| \sigma+\rho|^{\tilde{p}+1}-\sigma^{\tilde{p}+1}-(\tilde{p}+1)\sigma^{\tilde{p}}\rho\big) \Big) \big(  \sigma^{\tilde{p}}+\tilde{p} \sigma^{\tilde{p}-1}\rho  \big)\\
           & \quad + \Big(I_{\mu}\ast \big(\sigma^{\tilde{p}+1}+(\tilde{p}+1)\sigma^{\tilde{p}}\rho\big) \Big) \big(  \sigma^{\tilde{p}}+\tilde{p} \sigma^{\tilde{p}-1}\rho  \big)\\
           & \eqqcolon  \mathcal{B}_{1}+\mathcal{B}_{2}+\mathcal{B}_{3},
           \endaligned
       $$
       then using  the Proposition \ref{propos-1} with $r=\tilde{p}$   (here we need that $\tilde{p}\geq 1\iff\mu\leq 4$ ), $l=0$, $a=\sigma, b=\rho$, and the Lemma \ref{double-integral-estimate},  we know
      $$
       \aligned
           \Big|\int_{\mathbb{R}^{N}}\mathcal{B}_{1}\cdot \xi \Phi \,dx \Big|&\lesssim \int_{\mathbb{R}^{N}}  \big(I_{\mu}\ast | \sigma+\rho|^{\tilde{p}+1}\big) \big( |\rho|^{\tilde{p}} + \{\sigma^{\tilde{p}-2}|\rho|^{2}  \}_{\tilde{p}>2} \big)  |\xi|\Phi      \,dx\\
           &\lesssim \int_{\mathbb{R}^{N}}  \big(I_{\mu}\ast | \sigma+\rho|^{\tilde{p}+1}\big) |\rho|^{\tilde{p}}\tilde{U}    \,dx + \int_{\mathbb{R}^{N}}  \big(I_{\mu}\ast | \sigma+\rho|^{\tilde{p}+1}\big) \{\sigma^{\tilde{p}-1}|\rho|^{2}  \}_{\tilde{p}>2}   \,dx\\
           &\lesssim \|| \sigma+\rho|^{\tilde{p}+1}\|_{L^{\frac{p+1}{\tilde{p}+1}}} \||\rho|^{\tilde{p}}\tilde{U} \|_{L^{\frac{p+1}{\tilde{p}+1}}}   +\big\{\|| \sigma+\rho|^{\tilde{p}+1} \|_{L^{\frac{p+1}{\tilde{p}+1}}}\|\sigma^{\tilde{p}-1}|\rho|^{2} \|_{L^{\frac{p+1}{\tilde{p}+1}}} \big\}_{\tilde{p}>2}\\
           &\lesssim \|\sigma+\rho\|_{L^{p+1}}\|\nabla \rho\|_{L^{2}}^{\tilde{p}}\|\tilde{U}\|_{L^{p+1}}  + \big\{\| \sigma+\rho \|_{L^{p+1}}  \| \sigma\|_{L^{p+1}}^{\tilde{p}-1}\|\nabla \rho\|^{2}_{L^{2}}  \big\}_{\tilde{p}>2},
           \endaligned
       $$
       where the second term in the last inequality, we have used the H\"older's inequality which
       requires conditions \eqref{eq:3.8} and   $\tilde{p}+1>2\iff \mu <4 $ in the Table \ref{tab:Equivalent with-p wan-mu}). However the second term appears only when $\tilde{p}>2(\iff \mu<6-N)$ holds, which is a stronger condition since
       $6-N<4$ as $N\geq3$. Then by using the Sobolev inequality, the def{}inition of $\sigma$ and the condition $\alpha_{i}\lesssim 1$, $\|\nabla \rho\|_{L^{2}}\leq 1$, we know  that $ \|\sigma+\rho\|_{L^{p+1}}\lesssim \|\nabla(\sigma+\rho)\|_{L^{2}}\lesssim 1$, it follows that
       \begin{equation}\label{eq:3.14}
       \aligned
          \Big|\int_{\mathbb{R}^{N}}\mathcal{B}_{1}\cdot \xi \Phi \,dx \Big|\lesssim
          \|\nabla \rho\|_{L^{2}}^{\tilde{p}} + \big\{\|\nabla \rho\|^{2}_{L^{2}}\big\}_{\tilde{p}>2} \lesssim \|\nabla \rho\|_{L^{2}}^{\min(\tilde{p}, 2) },
          \endaligned
       \end{equation}
here we need $\tilde{p}>1 (\iff \mu <4)$. Using  the Proposition \ref{propos-1} with $r=\tilde{p}+1$ (it requires that $\tilde{p}+1\geq 1 (\iff\mu\leq N+2$) ), $l=0$, $a=\sigma, b=\rho$, the fact $|\xi|\lesssim \tilde{U} \lesssim \sigma$ and the Lemma \ref{double-integral-estimate}, we have the following estimates:
\begin{equation}\label{eq:3.15}
       \aligned
           \Big|\int_{\mathbb{R}^{N}}\mathcal{B}_{2}\cdot \xi \Phi \,dx \Big|&\lesssim \int_{\mathbb{R}^{N}}  \big(I_{\mu}\ast (|\rho|^{\tilde{p}+1} +  \{\sigma^{\tilde{p}-1}|\rho|^{2}  \}_{\tilde{p}+1>2}) \big) \big(\sigma^{\tilde{p}}+\tilde{p} \sigma^{\tilde{p}-1}|\rho|     \big)  |\xi|\Phi      \,dx\\
           &\lesssim \int_{\mathbb{R}^{N}}  \big(I_{\mu}\ast |\rho|^{\tilde{p}+1} \big) \sigma^{\tilde{p}+1}   \,dx + \int_{\mathbb{R}^{N}}  \big(I_{\mu}\ast |\rho|^{\tilde{p}+1} \big)   \sigma^{\tilde{p}}|\rho|    \,dx\\
           &\ \ \ \ +\int_{\mathbb{R}^{N}}  \big(I_{\mu}\ast \{\sigma^{\tilde{p}-1}|\rho|^{2}  \}_{\tilde{p}+1>2} \big) \sigma^{\tilde{p}+1}   \,dx  + \int_{\mathbb{R}^{N}}  \big(I_{\mu}\ast  \{\sigma^{\tilde{p}-1}|\rho|^{2}  \}_{\tilde{p}+1>2} \big)  \sigma^{\tilde{p}}|\rho|   \,dx\\
           &\lesssim \|\nabla \rho\|_{L^{2}}^{\tilde{p}+1} +\|\nabla \rho\|_{L^{2}}^{\tilde{p}+2}+\big\{\|\nabla \rho\|^{2}_{L^{2}}\big\}_{ \tilde{p}+1>2 }+ \big\{\|\nabla \rho\|^{3}_{L^{2}}\big\}_{\tilde{p}+1>2} \\
           &\lesssim \|\nabla \rho\|_{L^{2}}^{\min(\tilde{p}+1,2)},
           \endaligned
          \end{equation}
here we need $\tilde{p}+1>1$ i.e., $ \mu <N+2$, which   always holds since $\mu\in(0,N)$. Observe that
      $$
           \aligned
           \mathcal{B}_{3}&=\Big(I_{\mu}\ast \big(\sigma^{\tilde{p}+1}+(\tilde{p}+1)\sigma^{\tilde{p}}\rho\big) \Big) \big(  \sigma^{\tilde{p}}+\tilde{p} \sigma^{\tilde{p}-1}\rho  \big) \\
           &=\tilde{p}(\tilde{p}+1)\big(I_{\mu}\ast \sigma^{\tilde{p}}\rho\big) \sigma^{\tilde{p}-1}\rho + (\tilde{p}+1)\big(I_{\mu}\ast \sigma^{\tilde{p}}\rho \big)\sigma^{\tilde{p}}
           + \tilde{p} \big(I_{\mu}\ast \sigma^{\tilde{p}+1}\big) \sigma^{\tilde{p}-1}\rho
           + \big(I_{\mu}\ast \sigma^{\tilde{p}+1} \big)\sigma^{\tilde{p}}\\
           &\eqqcolon \mathcal{C}_{1}+\mathcal{C}_{2}+\mathcal{C}_{3}+\mathcal{C}_{4},
           \endaligned
       $$
       thus
       \begin{equation}\label{eq:3.16}
           \Big|\int_{\mathbb{R}^{N}}\mathcal{C}_{1}\cdot \xi \Phi \,dx\Big|\lesssim  \int_{\mathbb{R}^{N}} \big(I_{\mu}\ast \sigma^{\tilde{p}}|\rho|\big) \sigma^{\tilde{p}-1}|\rho| \sigma \,dx  \lesssim \|\nabla \rho\|_{L^{2}}^{2}.
       \end{equation}
       The second term $\mathcal{C}_{2}$ and the third term $\mathcal{C}_{3}$ will be combined with $\mathcal{A}_{3},\mathcal{A}_{4}$ respectively, they will be evaluated  later. Next we  estimate the fourth term $\mathcal{C}_{4}$,
$$
           \aligned
           \mathcal{C}_{4}=\big(I_{\mu}\ast \sigma^{\tilde{p}+1} \big)\sigma^{\tilde{p}}&= \big(I_{\mu}\ast \sigma^{\tilde{p}+1}\big) \big(  \sigma^{\tilde{p}}-(\alpha \tilde{U})^{\tilde{p}}-\tilde{p}(\alpha \tilde{U})^{\tilde{p}-1} \tilde{V}\big)\\
           & \quad + \Big( I_{\mu}\ast \big( \sigma^{\tilde{p}+1} -(\alpha\tilde{U})^{\tilde{p}+1}-(\tilde{p}+1)(\alpha\tilde{U})^{\tilde{p}} \tilde{V}\big)   \Big) (\alpha\tilde{U})^{\tilde{p}}\\
           & \quad + (\tilde{p}+1) \big(I_{\mu}\ast (\alpha\tilde{U})^{\tilde{p}} \tilde{V}\big) (\alpha\tilde{U})^{\tilde{p}}  \\
           & \quad + \tilde{p}\ ( I_{\mu}\ast \big( \sigma^{\tilde{p}+1}- (\alpha\tilde{U})^{\tilde{p}+1}  \big)  (\alpha\tilde{U})^{\tilde{p}-1}\tilde{V}    \\
           & \quad +  \big( I_{\mu}\ast  (\alpha\tilde{U})^{\tilde{p}+1} \big) (\alpha\tilde{U})^{\tilde{p}} +\tilde{p}  \big( I_{\mu}\ast  (\alpha\tilde{U})^{\tilde{p}+1} \big) (\alpha\tilde{U})^{\tilde{p}-1}  \tilde{V}   \\
           &\eqqcolon \mathcal{D}_{1}+\mathcal{D}_{2}+\mathcal{D}_{3}+\mathcal{D}_{4}+\mathcal{D}_{5}.
           \endaligned
       $$
Now we proceed to estimate the  above four terms $\mathcal{D}_{1}$-$\mathcal{D}_{4}$ in the region $\{\Phi>0\}$.
       \vskip0.15in
      \noindent$\bullet$ For $\mathcal{D}_{1}$, since in the region $\{\Phi>0\}$, $\tilde{U}_{j}< \varepsilon\tilde{U}$ for any $j\neq i$, thus $\tilde{V}\lesssim \varepsilon\tilde{U}$. If $\tilde{p}\geq 1 (\iff \mu\leq 4)$, it follows from \eqref{eq:2.13} that
      $$
        \aligned
             \frac{|  \sigma^{\tilde{p}}-(\alpha \tilde{U})^{\tilde{p}}-\tilde{p}(\alpha \tilde{U})^{\tilde{p}-1} \tilde{V}|}{\tilde{U}^{\tilde{p}-1}\tilde{V}}  &=\frac{| (\alpha \tilde{U}+\tilde{V})^{\tilde{p}}-(\alpha \tilde{U})^{\tilde{p}}-\tilde{p}(\alpha \tilde{U})^{\tilde{p}-1} \tilde{V}|}{\tilde{U}^{\tilde{p}-1}\tilde{V}}\lesssim \frac{| \tilde{V} |^{\tilde{p}}+\{|\tilde{U}|^{\tilde{p}-2}\tilde{V}^{2}  \}_{\tilde{p}>2}}{\tilde{U}^{\tilde{p}-1}\tilde{V}}\\
            \ &\lesssim \Big|\frac{\tilde{V}}{\tilde{U}}\Big|^{\tilde{p}-1}+
            \Big\{ \Big|\frac{\tilde{V}}{\tilde{U}}\Big|
            \Big\}_{\tilde{p}>2} \lesssim \ \varepsilon ^{\min(\tilde{p}-1,1)},
         \endaligned
       $$
       where the last inequality need $\tilde{p}-1>0 (\iff\mu<4)$. Then by the relation \eqref{eq:2.2}, we have
     $$
      \aligned
          \big|\mathcal{D}_{1}\big|  &= \big|\big(I_{\mu}\ast \sigma^{\tilde{p}+1}\big) \big(  \sigma^{\tilde{p}}-(\alpha \tilde{U})^{\tilde{p}}-\tilde{p}(\alpha \tilde{U})^{\tilde{p}-1} \tilde{V}\big) \big|   \lesssim \varepsilon ^{\min(\tilde{p}-1 , 1)} \big(I_{\mu}\ast \sigma^{\tilde{p}+1}\big)\, \tilde{U}^{\tilde{p}-1}\tilde{V}  \\
          &\lesssim \varepsilon ^{\min(\tilde{p}-1,1)} \bigg[(I_{\mu}\ast \tilde{U}^{\tilde{p}+1})\,\tilde{U}^{\tilde{p}-1}   \tilde{V}  +\sum_{j\neq i} (I_{\mu}\ast \tilde{U}_{j}^{\tilde{p}+1})\, \tilde{U}^{\tilde{p}-1}  \tilde{V}  \bigg]\\
          & \approx \varepsilon ^{\min(\tilde{p}-1,1)} \bigg[\tilde{U}^{p-1}  \tilde{V}  +\sum_{j\neq i} \tilde{U}_{j}^{p-\tilde{p}} \tilde{U}^{\tilde{p}-1}  \tilde{V} \bigg] \\
          &\lesssim  \varepsilon ^{\min(\tilde{p}-1,1)} \bigg[\tilde{U}^{p-1}  \tilde{V}  +\sum_{j\neq i}( \varepsilon \tilde{U})^{p-\tilde{p}} \tilde{U}^{\tilde{p}-1}  \tilde{V} \bigg]\\
          &\lesssim\varepsilon ^{\min(\tilde{p}-1,1)} (1+\varepsilon^{p-\tilde{p}})\ \tilde{U}^{p-1}  \tilde{V} ,
      \endaligned
       $$
       i.e., $\mathcal{D}_{1}=o(1)\tilde{U}^{p-1}\tilde{V}$ in the region $\{\Phi>0\}$ if $\mu<4$.
       \vskip0.16in
      \noindent$\bullet$ For $\mathcal{D}_{2}$, since in the region $\{\Phi>0\}$, $\tilde{U}_{j}< \varepsilon\tilde{U}$ for any $j\neq i$, thus $\{\Phi>0\}\subseteq \{\tilde{V}< \varepsilon(\nu-1)\tilde{U} \}\subseteq \{\tilde{V}< \varepsilon\nu\tilde{U} \} $ and  the set $\{\tilde{V}< \varepsilon\nu\tilde{U} \}$ is nonempty. Therefore,
     $$
      \aligned
          \big|\mathcal{D}_{2} \big|&= \Big|\Big( I_{\mu}\ast \big( \sigma^{\tilde{p}+1} -(\alpha\tilde{U})^{\tilde{p}+1}-(\tilde{p}+1)(\alpha\tilde{U})^{\tilde{p}} \tilde{V}\big)   \Big) (\alpha\tilde{U})^{\tilde{p}} \Big|   \\
          &\lesssim \Big( I_{\mu}\ast \big| \sigma^{\tilde{p}+1} -(\alpha\tilde{U})^{\tilde{p}+1}-(\tilde{p}+1)(\alpha\tilde{U})^{\tilde{p}} \tilde{V}\big|   \Big) (\alpha\tilde{U})^{\tilde{p}}\\
          &\lesssim  \Big( I_{\mu}\ast \big( \tilde{V}^{\tilde{p}+1} +\{ (\alpha\tilde{U})^{\tilde{p}-1} \tilde{V}^{2}\}_{\tilde{p}+1>2} \big)   \Big) (\alpha\tilde{U})^{\tilde{p}}    \\
          &= \Big( I_{\mu}\ast \mathds{1}_{\{\tilde{V}< \varepsilon\nu\tilde{U} \}}\big( \tilde{V}^{\tilde{p}+1} +\{ (\alpha\tilde{U})^{\tilde{p}-1} \tilde{V}^{2}\}_{\tilde{p}+1>2} \big)   \Big) (\alpha\tilde{U})^{\tilde{p}}   \\
          &\ + \Big( I_{\mu}\ast \mathds{1}_{\{\tilde{V}\geq \varepsilon\nu\tilde{U} \}}\big( \tilde{V}^{\tilde{p}+1} +\{ (\alpha\tilde{U})^{\tilde{p}-1} \tilde{V}^{2}\}_{\tilde{p}+1>2} \big)   \Big) (\alpha\tilde{U})^{\tilde{p}},
      \endaligned
      $$
       where the f{}irst term
      $$
      \aligned\ & \Big( I_{\mu}\ast \mathds{1}_{\{\tilde{V}< \varepsilon\nu\tilde{U} \}}\big( \tilde{V}^{\tilde{p}+1} +\{ (\alpha\tilde{U})^{\tilde{p}-1} \tilde{V}^{2}\}_{\tilde{p}+1>2} \big)   \Big) (\alpha\tilde{U})^{\tilde{p}} \\
         \lesssim \ &   \Big( I_{\mu}\ast  \varepsilon^{\min(\tilde{p}+1, 2)}\tilde{U}^{\tilde{p}+1} \big)   \Big) (\alpha\tilde{U})^{\tilde{p}}
       \approx    \varepsilon^{\min(\tilde{p}+1, 2)} \tilde{U}^{p-\tilde{p}} \tilde{U}^{\tilde{p}}    \approx \varepsilon^{\min(\tilde{p}+1, 2)}  \tilde{U}^{p},
      \endaligned
 $$
     and   the second term
  $$
      \aligned
        & \Big( I_{\mu}\ast \mathds{1}_{\{\tilde{V}\geq \varepsilon\nu\tilde{U} \}}\big( \tilde{V}^{\tilde{p}+1} +\{ (\alpha\tilde{U})^{\tilde{p}-1} \tilde{V}^{2}\}_{\tilde{p}+1>2} \big)   \Big) (\alpha\tilde{U})^{\tilde{p}}    \\
        \lesssim \ & \Big( I_{\mu}\ast \big( \tilde{V}^{\tilde{p}+1} +\Big\{ \Big(\frac{1}{\varepsilon}\Big)^{\tilde{p}-1} \tilde{V}^{\tilde{p}-1}\tilde{V}^{2}\Big\}_{\tilde{p}+1>2} \big)   \Big) (\alpha\tilde{U})^{\tilde{p}}    \\
        \lesssim \ & \Big(1+\Big\{\Big(\frac{1}{\varepsilon}\Big)^{\tilde{p}}\Big\}_{\tilde{p}+1>2}\Big)\big( I_{\mu}\ast  \tilde{V}^{\tilde{p}+1}  \big) (\alpha\tilde{U})^{\tilde{p}}    \\
        \lesssim \ & \Big(1+\Big\{\Big(\frac{1}{\varepsilon}\Big)^{\tilde{p}}\Big\}_{\tilde{p}+1>2}\Big) \sum_{j\neq i}\big( I_{\mu}\ast  \tilde{U}_{j}^{\tilde{p}+1}  \big) \tilde{U}^{\tilde{p}}    \\
        \approx \ & \Big(1+\Big\{\Big(\frac{1}{\varepsilon}\Big)^{\tilde{p}}\Big\}_{\tilde{p}+1>2}\Big) \sum_{j\neq i}  \tilde{U}_{j}^{p-\tilde{p}}   \tilde{U}^{\tilde{p}}    \\
        \lesssim \ & \Big(1+\Big\{\Big(\frac{1}{\varepsilon}\Big)^{\tilde{p}}\Big\}_{\tilde{p}+1>2}\Big) \sum_{j\neq i}  \varepsilon^{p-\tilde{p}}\tilde{U}^{p-\tilde{p}}   \tilde{U}^{\tilde{p}}     \lesssim \big(\varepsilon^{p-\tilde{p}}+\big\{\varepsilon^{p-2\tilde{p}}\big\}_{\tilde{p}+1>2}\big)\tilde{U}^{p}.
      \endaligned
      $$
Combining the above  two terms, we know  that $\mathcal{D}_{2}=o(1)\tilde{U}^{p}$ in the region $\{\Phi>0\}$, if
      $$
          \begin{cases}
              p-\tilde{p}>0 &\text{as \quad} \tilde{p}+1\leq2\\
              p-2\tilde{p}>0   &\text{as \quad}  \tilde{p}+1>2
          \end{cases}\iff \begin{cases}
\mu>0 &\text{as \quad}\mu\geq4\\
\mu>\frac{N+2}{2} &\text{as \quad}\mu<4
\end{cases}.
 $$
In particular, if $\frac{N+2}{2}<\mu<4$, then $\mathcal{D}_{2}=o(1)\tilde{U}^{p}$ in the region $\{\Phi>0\}$.


\vskip0.16in
\noindent$\bullet$ For $\mathcal{D}_{3}$, in the region $\{\Phi>0\}$, we have the following estimates:
 $$
 \aligned
 \mathcal{D}_{3}&=(\tilde{p}+1) \big(I_{\mu}\ast (\alpha\tilde{U})^{\tilde{p}} \tilde{V}\big) (\alpha\tilde{U})^{\tilde{p}}\\
  &=  (\tilde{p}+1) \big(I_{\mu}\ast \mathds{1}_{\{\tilde{V}< \varepsilon\nu\tilde{U} \}}(\alpha\tilde{U})^{\tilde{p}} \tilde{V}\big) (\alpha\tilde{U})^{\tilde{p}}+(\tilde{p}+1) \big(I_{\mu}\ast \mathds{1}_{\{\tilde{V}\geq \varepsilon\nu\tilde{U} \}}(\alpha\tilde{U})^{\tilde{p}} \tilde{V}\big) (\alpha\tilde{U})^{\tilde{p}} \\
   &\lesssim \varepsilon \big( I_{\mu}\ast\tilde{U}^{\tilde{p}+1} \big)\tilde{U}^{\tilde{p}}+\Big(\frac{1}{\varepsilon}\Big)^{\tilde{p}}\big(I_{\mu}\ast \tilde{V}^{\tilde{p}+1}\big)\tilde{U}^{\tilde{p}}
       \lesssim \varepsilon \big( I_{\mu}\ast\tilde{U}^{\tilde{p}+1} \big)\tilde{U}^{\tilde{p}}+\Big(\frac{1}{\varepsilon}\Big)^{\tilde{p}}\sum_{j\neq i}\big(I_{\mu}\ast \tilde{U}_{j}^{\tilde{p}+1}\big)\tilde{U}^{\tilde{p}} \\
       &\approx \varepsilon\tilde{U}^{p-\tilde{p}}  \tilde{U}^{\tilde{p}} + \Big(\frac{1}{\varepsilon}\Big)^{\tilde{p}}\sum_{j\neq i}  \tilde{U}_{j}^{p-\tilde{p}}  \tilde{U}^{\tilde{p}} \lesssim \varepsilon\tilde{U}^{p}+\Big(\frac{1}{\varepsilon}\Big)^{\tilde{p}}\sum_{j\neq i}  \varepsilon^{p-\tilde{p}}\tilde{U}^{p-\tilde{p}}  \tilde{U}^{\tilde{p}} \lesssim\big( \varepsilon+ \varepsilon^{p-2\tilde{p}} \big) \tilde{U}^{p},
      \endaligned
 $$
       thus  $\mathcal{D}_{3}=o(1)\tilde{U}^{p}$ in the region $\{\Phi>0\}$ if $p-2\tilde{p}>0\iff \mu>\frac{N+2}{2}$.
       \vskip0.06in
      \noindent
       \vskip0.16in
      \noindent$\bullet$ For $\mathcal{D}_{4}$, since $\alpha$ has a positive upper bound in the region $\{\tilde{V}\lesssim \varepsilon \tilde{U}\}$ we obviously have
     $$
           \frac{|\sigma^{\tilde{p}+1}- (\alpha\tilde{U})^{\tilde{p}+1} |}{\tilde{U}^{\tilde{p}+1}}=\Big|\big(\alpha+\frac{\tilde{V}}{\tilde{U}}\big)^{\tilde{p}+1}-\alpha^{\tilde{p}+1}\Big|\lesssim(\alpha+t\,\frac{\tilde{V}}{\tilde{U}}\big)^{\tilde{p}}\cdot\frac{\tilde{V}}{\tilde{U}}\lesssim \varepsilon,
$$    
       where $t\in[0,1]$. Since $\alpha$ also has a positive lower bound,  for any $q\in \mathbb{R}$, there holds $ |\sigma^{q}- (\alpha\tilde{U})^{q} |\lesssim_{q} \varepsilon \tilde{U}^{q}$ in the region $\{\tilde{V}\lesssim \varepsilon \tilde{U}\}$.
       On the other hand, since $\tilde{p}+1>0$, hence  in the region $\{\tilde{V}\gtrsim \varepsilon \tilde{U}\}$, we see that
  $$\aligned
          \frac{|\sigma^{\tilde{p}+1}-
          (\alpha \tilde{U})^{\tilde{p}+1} 
          |}{ \tilde{V}^{\tilde{p}+1}}
          &=\frac{(\alpha \tilde{U}+\tilde{V}
          )^{\tilde{p}+1}-(\alpha \tilde{U}
          )^{\tilde{p}+1}}{\tilde{V}^{\tilde{p}+1}}\\
          &=\Big(1+\frac{\alpha \tilde{U}}{\tilde{V}}
          \Big)^{\tilde{p}+1}-
          \Big(\frac{\alpha \tilde{U}}{\tilde{V}}
          \Big)^{\tilde{p}+1}\\
         & \lesssim \Big(1+\frac{\alpha}{
            \varepsilon\nu}\Big)^{\tilde{p}+1}\\
            &\lesssim \Big(\frac{1}{\varepsilon}\Big)^{\tilde{p}+1}.
     \endaligned $$
     Similarly, for any $q\geq0$, there holds $ |\sigma^{q}- (\alpha\tilde{U})^{q} |\lesssim_{q} (\frac{1}{\varepsilon})^{q} \tilde{V}^{q}$ in the region $\{\tilde{V}\gtrsim \varepsilon \tilde{U}\}$.
       Therefore,  similar to  the estimate of $\mathcal{D}_{3}$, in the region $\{\Phi>0\}$, we have that
$$
      \aligned
       \mathcal{D}_{4}&=\tilde{p}\ ( I_{\mu}\ast
       \big( \sigma^{\tilde{p}+1}- (\alpha\tilde{U})^{\tilde{p}+1}
        \big))
         (\alpha\tilde{U})^{\tilde{p}-1}\tilde{V}\\
       &=\tilde{p}\
       ( I_{\mu}\ast \mathds{1}_{\{\tilde{V}
       < \varepsilon\nu\tilde{U} \}})
       \big( \sigma^{\tilde{p}+1}- (\alpha\tilde{U})^{\tilde{p}+1}  \big)  (\alpha\tilde{U})^{\tilde{p}-1}\tilde{V}\\
       &\ +\tilde{p}\ ( I_{\mu}\ast 
       \mathds{1}_{\{\tilde{V}\geq
        \varepsilon\nu\tilde{U} \}})
        \big( \sigma^{\tilde{p}+1}- 
        (\alpha\tilde{U})^{\tilde{p}+1}  
        \big)  (\alpha\tilde{U}
        )^{\tilde{p}-1}\tilde{V}\\
       &\lesssim \varepsilon\big(I_{\mu}\ast  
       \tilde{U}^{\tilde{p}+1}\big)\tilde{U}^{\tilde{p}-1}\tilde{V}+ \Big(\frac{1}{\varepsilon}\Big)^{\tilde{p}+1}\big(I_{\mu}\ast  \tilde{V}^{\tilde{p}+1}\big)\tilde{U}^{\tilde{p}-1}\tilde{V}\\
       &\lesssim \varepsilon^{2}\tilde{U}^{p}+ 
       \varepsilon^{-\tilde{p}-1}
       \varepsilon^{p-\tilde{p}}\varepsilon 
       \tilde{U}^{p} \\
       &\approx (\varepsilon^{2}
       +\varepsilon^{p-2\tilde{p}})\tilde{U}^{p},
      \endaligned
  $$
       therefore, $\mathcal{D}_{4}=o(1)\tilde{U}^{p}$ in the region $\{\Phi>0\}$ whenever  $p-2\tilde{p}>0 (i.e.,\;\mu>\frac{N+2}{2})$.
       \vskip0.16in
We have shown that the four terms $\mathcal{D}_{1}$-$\mathcal{D}_{4}$ in $\mathcal{C}_{4}$ are higher order terms  which can be neglected  as long as $\frac{N+2}{2}<\mu<4$. Next we will see that  the main term is $\mathcal{D}_{5}$. Let
    $$
        \mathcal{A}_{2}^{(1)}\coloneqq -
        \alpha (I_{\mu}\ast \tilde{U}^{\tilde{p}+1})
        \tilde{U}^{\tilde{p}},\ \ \  \ \mathcal{A}_{2}^{(2)}\coloneqq - \sum_{j\neq i}\alpha_{j}(I_{\mu}\ast \tilde{U}_{j}^{\tilde{p}+1})\tilde{U}_{j}^{\tilde{p}},
     $$
       then $\mathcal{A}_{2}=\mathcal{A}_{2}^{(1)}+\mathcal{A}_{2}^{(2)}$, and
       by the relation \eqref{eq:2.2},
   $$
           \aligned
           \mathcal{D}_{5}+\mathcal{A}_{2}^{(1)}&=\big( I_{\mu}\ast  (\alpha\tilde{U})^{\tilde{p}+1} \big) (\alpha\tilde{U})^{\tilde{p}} +\tilde{p}  \big( I_{\mu}\ast  (\alpha\tilde{U})^{\tilde{p}+1} \big) (\alpha\tilde{U})^{\tilde{p}-1}  \tilde{V}-\alpha (I_{\mu}\ast \tilde{U}^{\tilde{p}+1})\tilde{U}^{\tilde{p}}\\
           &\approx \alpha^{2\tilde{p}+1}  \tilde{U}^{p-\tilde{p}}\tilde{U}^{\tilde{p}} + \tilde{p}\alpha^{2\tilde{p}}\tilde{U}^{p-\tilde{p}}\tilde{U}^{\tilde{p}-1}\tilde{V}-\alpha  \tilde{U}^{p-\tilde{p}}\tilde{U}^{\tilde{p}}\\
           &\approx (\alpha^{2\tilde{p}+1}-\alpha )  \tilde{U}^{p} + \tilde{p}\alpha^{2\tilde{p}}\tilde{U}^{p-1}\tilde{V}.
           \endaligned
   $$
      \vskip0.06in
      \noindent$\bullet$ For $\mathcal{A}_{2}^{(2)}$, in the region $\{\Phi>0\}$ we have that
 $$
           \aligned
           |\mathcal{A}_{2}^{(2)}|&= \sum_{j\neq i}\alpha_{j}(I_{\mu}\ast \tilde{U}_{j}^{\tilde{p}+1})\tilde{U}_{j}^{\tilde{p}}\approx  \tilde{U}_{j}^{p-\tilde{p}}\tilde{U}_{j}^{\tilde{p}}\lesssim (\varepsilon \tilde{U})^{p-\tilde{p}}(\varepsilon \tilde{U})^{\tilde{p}}\approx \varepsilon^{p} \tilde{U}^{p},
           \endaligned
        $$
      i.e., $\mathcal{A}_{2}^{(2)}=o(1)\tilde{U}^{p}$ in the region $\{\Phi>0\}$. Writing equation \eqref{eq:3.13} as the above decomposition, we have  the following identity
  $$
      \aligned
          -(\mathcal{D}_{5}+\mathcal{A}_{2}^{(1)})-(\mathcal{D}_{1}+\mathcal{D}_{2}+\mathcal{D}_{3}+\mathcal{D}_{4})-\mathcal{A}_{2}^{(2)} = \Delta \rho &-( \mathcal{A}_{3}+\mathcal{A}_{4})+(\mathcal{B}_{1}+\mathcal{B}_{2}+\mathcal{C}_{1})\\
          &+(\mathcal{C}_{2}+\mathcal{A}_{3})+(\mathcal{C}_{3}+\mathcal{A}_{4}) + \mathcal{A}_{5},
      \endaligned
  $$
  Multiply both sides with $\xi \Phi$, where $\xi$ is $\tilde{U}$ or $ \partial_{\lambda}\tilde{U}$  and $\partial_{\lambda}=\partial_{\lambda}\big|_{\lambda=1}$, then integral  by parts we have that
      \begin{equation}\label{eq:3.17}
      \aligned
      &\Big|\int_{\mathbb{R}^{N}}  \Big[(\alpha-\alpha^{2\tilde{p}+1}+o(1) )  \tilde{U}^{p} - (\tilde{p}\alpha^{2\tilde{p}}+o(1))\tilde{U}^{p-1}\tilde{V} \Big]  \xi \Phi\,dx \Big| \\
       & \lesssim  \Big| \int_{\mathbb{R}^{N}} \nabla \rho \cdot \nabla (\xi\Phi)  \,dx \Big|+\Big|
      \int_{\mathbb{R}^{N}}  (\mathcal{A}_{3}+\mathcal{A}_{4}) \cdot \xi\Phi \,dx\Big|  \\
       &\quad +   \Big| \int_{\mathbb{R}^{N}} \mathcal{B}_{1} \cdot \xi\Phi  \,dx \Big| +  \Big| \int_{\mathbb{R}^{N}} \mathcal{B}_{2}  \cdot \xi\Phi  \,dx \Big|  + \Big| \int_{\mathbb{R}^{N}} \mathcal{C}_{1} \cdot \xi\Phi  \,dx \Big| \\
       &\quad +     \Big| \int_{\mathbb{R}^{N}} (\mathcal{C}_{2}+ \mathcal{A}_{3}) \cdot\xi\Phi  \,dx \Big| +  \Big| \int_{\mathbb{R}^{N}} (\mathcal{C}_{3}+ \mathcal{A}_{4}) \cdot\xi\Phi  \,dx \Big| +  \Big| \int_{\mathbb{R}^{N}}  \mathcal{A}_{5}\cdot \xi\Phi  \,dx \Big| \\
       &\eqqcolon  \mathcal{E}_{1} + \cdot\cdot\cdot    + \mathcal{E}_{8}. \\
      \endaligned
      \end{equation}
      Next we estimate each term $\mathcal{E}_{i}$, $i=1,\cdots,8$.
      \vskip0.16in
      \noindent$\bullet$ For $\mathcal{E}_{1}$, combining the orthogonality conditions  \eqref{eq:3.9}, \eqref{eq:3.10} with the equations \eqref{eq:2.1}, \eqref{eq:2.4}, we know that
      $\rho$ is orthogonal with $\tilde{U}$ and $\partial_{\lambda}\tilde{U}$ in $\mathcal{D}^{1,2}(\R^{N})$, it follows that  $\int_{\mathbb{R}^{N}}\nabla \rho \cdot \nabla \xi \,dx=0$. Further, by the H\"older's inequality $(\frac{1}{p+1}+\frac{1}{N}=\frac{1}{2})$, we get that
    $$
         \aligned
             \mathcal{E}_{1}&=\Big| 
             \int_{\mathbb{R}^{N}} \nabla 
             \rho \cdot \nabla (\xi\Phi)  \,dx \Big|\\
             &=\Big| \int_{\mathbb{R}^{N}} \nabla
              \rho \cdot \nabla \big(\xi(\Phi-1)\big)  
              \,dx\Big|\\
              &\leq \|\nabla \rho\|_{L^{2}} 
               \|\nabla \big(\xi(\Phi-1)\big) 
               \|_{L^{2}}\\
             &=\|\nabla \rho\|_{L^{2}} \| 
              (\Phi-1) \nabla \xi + \xi \nabla 
              \Phi \|_{L^{2}}\\
              &\leq \|\nabla \rho\|_{L^{2}}
              \big( \| \nabla \xi\|_{L^{2}\{\Phi<1\}}
               + \|\xi\|_{L^{p+1}}\| \nabla 
               \Phi \|_{L^{N}}\big).
         \endaligned
   $$
      Thanks to the Lemmas \ref{lem:localization}-\ref{it:localization3},  we know that $\| \nabla \Phi \|_{L^{N}}=o(1)$. Combining  the estimate \eqref{eq:2.7} and  Lemma \ref{lem:localization}-\ref{it:localization1}, we know that
      $$\aligned\| \nabla \xi\|_{L^{2}\{\Phi<1\}}
      &\lesssim \|\nabla \tilde{U}\|_{L^{2}\{\Phi<1\}}
      +\|\tilde{U}\|_{L^{p+1}\{\Phi<1\}} \\
      &\lesssim\|\nabla U\|_{L^{2}\{\Phi<1\}}
      +\|U\|_{L^{p+1}\{\Phi<1\}}=o(1),\endaligned$$ 
      therefore,
      \begin{equation}\label{eq:3.18}
          \mathcal{E}_{1}\lesssim o(1)\|\nabla \rho\|_{L^{2}}.
      \end{equation}
      \vskip0.06in
      \noindent$\bullet$ For $\mathcal{E}_{2}$,  by using Fubini's theorem, we have that
    $$
         \aligned
             \mathcal{E}_{2}&=\Big|\int_{\mathbb{R}^{N}}  (\mathcal{A}_{3}+\mathcal{A}_{4}) \cdot \xi\Phi \,dx\Big| \\
             &=\Big|\int_{\mathbb{R}^{N}} \Big[(\tilde{p}+1)(I_{\mu}\ast(\alpha \tilde{U})^{\tilde{p}}\rho)(\alpha \tilde{U})^{\tilde{p}} + \tilde{p}\,(I_{\mu}\ast(\alpha \tilde{U})^{\tilde{p}+1})(\alpha \tilde{U})^{\tilde{p}-1}\rho \Big]\cdot \xi\Phi \,dx\Big|\\
             & =\Big|\int_{\mathbb{R}^{N}} \Big[(\tilde{p}+1)(I_{\mu}\ast(\alpha \tilde{U})^{\tilde{p}}\rho)(\alpha \tilde{U})^{\tilde{p}} + \tilde{p}\,(I_{\mu}\ast(\alpha \tilde{U})^{\tilde{p}+1})(\alpha \tilde{U})^{\tilde{p}-1}\rho \Big] \xi \,dx\\
             &\ \ +\int_{\mathbb{R}^{N}} \Big[(\tilde{p}+1)(I_{\mu}\ast(\alpha \tilde{U})^{\tilde{p}}\rho)(\alpha \tilde{U})^{\tilde{p}} + \tilde{p}\,(I_{\mu}\ast(\alpha \tilde{U})^{\tilde{p}+1})(\alpha \tilde{U})^{\tilde{p}-1}\rho \Big] \xi(\Phi-1)\,dx\Big|\\
             & =\Big|\int_{\mathbb{R}^{N}} \alpha^{2\tilde{p}}\Big[(\tilde{p}+1)(I_{\mu}\ast\tilde{U}^{\tilde{p}}\rho) \tilde{U}^{\tilde{p}} + \tilde{p}\,(I_{\mu}\ast \tilde{U}^{\tilde{p}+1}) \tilde{U}^{\tilde{p}-1}\rho \Big] \xi \,dx\\
             &\ \ +\int_{\mathbb{R}^{N}} \Big[(\tilde{p}+1)(I_{\mu}\ast(\alpha \tilde{U})^{\tilde{p}}\rho)(\alpha \tilde{U})^{\tilde{p}} + \tilde{p}\,(I_{\mu}\ast(\alpha \tilde{U})^{\tilde{p}+1})(\alpha \tilde{U})^{\tilde{p}-1}\rho \Big] \xi(\Phi-1)\,dx\Big|\\
             & =\Big|\int_{\mathbb{R}^{N}} \alpha^{2\tilde{p}}\Big[(\tilde{p}+1)(I_{\mu}\ast\tilde{U}^{\tilde{p}}\xi) \tilde{U}^{\tilde{p}} + \tilde{p}\,(I_{\mu}\ast \tilde{U}^{\tilde{p}+1}) \tilde{U}^{\tilde{p}-1}\xi \Big] \rho \,dx\\
             &\ \ +\int_{\mathbb{R}^{N}} \Big[(\tilde{p}+1)(I_{\mu}\ast(\alpha \tilde{U})^{\tilde{p}}\rho)(\alpha \tilde{U})^{\tilde{p}} + \tilde{p}\,(I_{\mu}\ast(\alpha \tilde{U})^{\tilde{p}+1})(\alpha \tilde{U})^{\tilde{p}-1}\rho \Big] \xi(\Phi-1)\,dx\Big|\\
             & =\Big|\int_{\mathbb{R}^{N}} \Big[(\tilde{p}+1)(I_{\mu}\ast(\alpha \tilde{U})^{\tilde{p}}\rho)(\alpha \tilde{U})^{\tilde{p}} + \tilde{p}\,(I_{\mu}\ast(\alpha \tilde{U})^{\tilde{p}+1})(\alpha \tilde{U})^{\tilde{p}-1}\rho \Big] \xi(\Phi-1)\,dx\Big|,
         \endaligned
     $$
      where the last equality is obtained by the orthogonality conditions \eqref{eq:3.9} and \eqref{eq:3.10}. Then
      \begin{equation}\label{eq:3.19}
         \aligned
            \mathcal{E}_{2}&\lesssim \int_{\mathbb{R}^{N}} (I_{\mu}\ast \tilde{U}^{\tilde{p}}|\rho|) \tilde{U}^{\tilde{p}}\tilde{U}(1-\Phi)\,dx + \int_{\mathbb{R}^{N}}(I_{\mu}\ast \tilde{U}^{\tilde{p}+1}) \tilde{U}^{\tilde{p}-1}|\rho| \tilde{U}(1-\Phi)\,dx \\
            &\lesssim \|\tilde{U}^{\tilde{p}}|\rho|\|_{L^{\frac{p+1}{\tilde{p}+1}}}\|\tilde{U}^{\tilde{p}+1}(1-\Phi)\|_{L^{\frac{p+1}{\tilde{p}+1}}}+ \|\tilde{U}^{\tilde{p}+1}\|_{L^{\frac{p+1}{\tilde{p}+1}}} \|\tilde{U}^{\tilde{p}}(1-\Phi) |\rho|  \|_{L^{\frac{p+1}{\tilde{p}+1}}}\\
            &\lesssim \|\tilde{U}\|_{L^{p+1}}^{\tilde{p}}\|\nabla \rho\|_{L^{2}} \|\tilde{U} \|_{L^{p+1}(\{\Phi<1\})}^{\tilde{p}+1}+\|\tilde{U}\|_{L^{p+1}}^{\tilde{p}+1}\|\tilde{U}\|_{L^{p+1}(\{\Phi<1\})}^{\tilde{p}}\|\nabla \rho\|_{L^{2}} \\
            &\lesssim o(1) \|\nabla \rho\|_{L^{2}} \quad\quad \hbox{(Lemma\; \ref{lem:localization}-(1)).}
         \endaligned
      \end{equation}
      \vskip0.16in
      \noindent$\bullet$ For $\mathcal{E}_{3},\mathcal{E}_{4},\mathcal{E}_{5}$, by  \eqref{eq:3.14}, \eqref{eq:3.15} and \eqref{eq:3.16}, we have known  that
      \begin{equation}\label{eq:3.20}
         \aligned
             \mathcal{E}_{3} +\mathcal{E}_{4}+\mathcal{E}_{5} &\lesssim \|\nabla \rho\|_{L^{2}}^{\min(\tilde{p},2)}+\|\nabla \rho\|_{L^{2}}^{\min(\tilde{p}+1,2)}+\|\nabla \rho\|_{L^{2}}^{2}\lesssim \|\nabla \rho\|_{L^{2}}^{\min(\tilde{p},2)}.
         \endaligned
      \end{equation}
    To achieve our goals,  we require the condition $\tilde{p}>1$ $(\iff \mu<4 )$ to guarantee that the exponent is strictly greater than $1$.
      \vskip0.16in
      \noindent$\bullet$ For $\mathcal{E}_{6}$, we observe that
    $$
      \aligned
      \mathcal{C}_{2} + \mathcal{A}_{3}=\ &(\tilde{p}+1)\big(I_{\mu}\ast \sigma^{\tilde{p}}\rho \big)\sigma^{\tilde{p}} - (\tilde{p}+1)(I_{\mu}\ast(\alpha \tilde{U})^{\tilde{p}}\rho)(\alpha \tilde{U})^{\tilde{p}}\\
      \approx\ &\big(I_{\mu}\ast \sigma^{\tilde{p}}\rho \big)\sigma^{\tilde{p}} - (I_{\mu}\ast(\alpha \tilde{U})^{\tilde{p}}\rho)(\alpha \tilde{U})^{\tilde{p}}\\
      =\ & \big(I_{\mu}\ast (\sigma^{\tilde{p}}\rho- (\alpha \tilde{U})^{\tilde{p}}\rho ) \big)\sigma^{\tilde{p}} + (I_{\mu}\ast(\alpha \tilde{U})^{\tilde{p}}\rho)\big(\sigma ^{\tilde{p}}-  (\alpha \tilde{U})^{\tilde{p}} \big)\\
      \eqqcolon  & \ \  \mathcal{F}_{1}+\mathcal{F}_{2}.
      \endaligned
  $$
       Since $|\sigma^{\tilde{p} }-(\alpha \tilde{U})^{\tilde{p}} |\lesssim \varepsilon \tilde{U}^{\tilde{p}}$ in the region $\{ \Phi >0 \}$ (see the estimate of $\mathcal{D}_{4}$), thus
   $$
      \aligned
          \Big|\int_{\mathbb{R}^{N}} \mathcal{F}_{2} \cdot \xi \Phi \,dx \Big|&= \Big|\int_{\mathbb{R}^{N}}(I_{\mu}\ast(\alpha \tilde{U})^{\tilde{p}}\rho)\big(\sigma ^{\tilde{p}}-  (\alpha \tilde{U})^{\tilde{p}} \big) \cdot \xi \Phi \,dx\Big|\lesssim  \int_{\mathbb{R}^{N}}(I_{\mu}\ast(\alpha \tilde{U})^{\tilde{p}}\rho)\,\varepsilon \tilde{U}^{\tilde{p}}  |\xi| \Phi \,dx\\
          & \lesssim \varepsilon \int_{\mathbb{R}^{N}}(I_{\mu}\ast(\alpha \tilde{U})^{\tilde{p}}\rho)\,\tilde{U}^{\tilde{p}+1}  \,dx \lesssim \varepsilon \|\nabla \rho\|_{L^{2}} \approx o(1) \|\nabla \rho\|_{L^{2}}.
      \endaligned
 $$
       On the other hand,
       \begin{equation}\label{eq:3.21}
      \aligned
          \Big|\int_{\mathbb{R}^{N}} \mathcal{F}_{1} \cdot \xi \Phi \,dx \Big|&=\Big| \int_{\mathbb{R}^{N}}\big(I_{\mu}\ast (\sigma^{\tilde{p}}\rho- (\alpha \tilde{U})^{\tilde{p}}\rho ) \big)\sigma^{\tilde{p}} \cdot \xi \Phi \,dx\Big|\\
          &\leq\Big| \int_{\mathbb{R}^{N}}\big(I_{\mu}\ast \mathds{1}_{\{\tilde{V}< \varepsilon\nu\tilde{U} \}}(\sigma^{\tilde{p}}\rho- (\alpha \tilde{U})^{\tilde{p}}\rho ) \big)\sigma^{\tilde{p}} \cdot \xi \Phi\,dx\Big| \\
   & +\Big| \int_{\mathbb{R}^{N}}\big(I_{\mu}\ast \mathds{1}_{\{\tilde{V}\geq \varepsilon\nu\tilde{U} \}}(\sigma^{\tilde{p}}\rho- (\alpha \tilde{U})^{\tilde{p}}\rho ) \big)\sigma^{\tilde{p}} \cdot \xi \Phi \,dx\Big|\\
   &\eqqcolon S_{1}+S_{2}.
   \endaligned
  \end{equation}
  For the  f{}irst term  {$S_{1}$} above,
  since $|\sigma^{\tilde{p}}-(\alpha \tilde{U})^{\tilde{p}}|\lesssim\varepsilon \tilde{U}^{\tilde{p}}$ in the region $\{\tilde{V}< \varepsilon\nu\tilde{U}\}$ (see the estimate of $\mathcal{D}_{4}$), and   $\sigma^{\tilde{p}}\lesssim \tilde{U}^{\tilde{p}}+ \tilde{V}^{\tilde{p}}\lesssim (1+\varepsilon^{\tilde{p}})
  \tilde{U}^{\tilde{p}}\lesssim \tilde{U}^{\tilde{p}}$ in the region $\{\Phi>0\}$ for $\tilde{p}>0$,  we obtain that
 $$
      \aligned
      S_{1}\lesssim \ & \int_{\mathbb{R}^{N}}\big(I_{\mu}\ast \mathds{1}_{\{\tilde{V}< \varepsilon\nu\tilde{U} \}}(|\sigma^{\tilde{p}}- (\alpha \tilde{U})^{\tilde{p}}|\cdot|\rho| ) \big)\sigma^{\tilde{p}} \cdot |\xi| \Phi \,dx\\
      \lesssim \ & \varepsilon \int_{\mathbb{R}^{N}}  \big(I_{\mu}\ast (\tilde{U}^{\tilde{p}} |\rho|)  \big) \tilde{U}^{\tilde{p}+1} \,dx \lesssim \varepsilon\|\nabla \rho\|_{L^{2}}\approx o(1)\|\nabla \rho\|_{L^{2}}.
      \endaligned
   $$
      As for the  second term  {$S_{2}$}, since $|\sigma^{\tilde{p}}-(\alpha \tilde{U})^{\tilde{p}} |\lesssim (\frac{1}{\varepsilon})^{\tilde{p}} \tilde{V}^{\tilde{p}} $ in the region $ \{\tilde{V}\geq \varepsilon\nu\tilde{U} \}$ (see the estimate of $\mathcal{D}_{4}$),  and $\sigma^{\tilde{p}}\lesssim \tilde{U}^{\tilde{p}}+ \tilde{V}^{\tilde{p}}\lesssim (1+\varepsilon^{\tilde{p}})
  \tilde{U}^{\tilde{p}}\lesssim \tilde{U}^{\tilde{p}}$ in the region $\{\Phi>0\}$ when $\tilde{p}>0$,  then we get that
      \begin{equation}\label{eq:3.22}
      \aligned
          S_{2}=\ &\Big| \int_{\mathbb{R}^{N}}\big(I_{\mu}\ast (\sigma^{\tilde{p}} \xi \Phi ) \big)  \mathds{1}_{\{\tilde{V}\geq \varepsilon\nu\tilde{U} \}}(\sigma^{\tilde{p}}\rho- (\alpha \tilde{U})^{\tilde{p}}\rho )      \,dx\Big|\\
          \lesssim\ & \int_{\mathbb{R}^{N}}\big(I_{\mu}\ast (\tilde{U}^{\tilde{p}} \tilde{U} ) \big)  \mathds{1}_{\{\tilde{V}\geq \varepsilon\nu\tilde{U} \}}\Big(\frac{1}{\varepsilon}\Big)^{\tilde{p}} \tilde{V}^{\tilde{p}}|\rho|       \,dx\\
          \lesssim \ & \Big(\frac{1}{\varepsilon}\Big)^{\tilde{p}}\int_{\mathbb{R}^{N}}\big(I_{\mu}\ast  \tilde{U}^{\tilde{p}+1} \big) \tilde{V}^{\tilde{p}}|\rho| \,dx\lesssim  \Big(\frac{1}{\varepsilon}\Big)^{\tilde{p}} \sum_{j\neq i} \int_{\mathbb{R}^{N}}  \tilde{U}^{p-\tilde{p}} \tilde{U}_{j}^{\tilde{p}}|\rho| \,dx \\
          \lesssim \ &  \Big(\frac{1}{\varepsilon}\Big)^{\tilde{p}} \sum_{j\neq i} \Big(\int_{\mathbb{R}^{N}} (\tilde{U}^{p-\tilde{p}} \tilde{U}_{j}^{\tilde{p}})^{\frac{p+1}{p}}  \,dx \Big)^{\frac{p}{p+1}} \|\nabla \rho\|_{L^{2}}.
      \endaligned
       \end{equation}
      By Remark \ref{power of epilson}, we know that  $\delta=\varepsilon^{\eta-\frac{2}{N-2}}$, where $\eta\geq 16$ will be determined later.
      \vskip0.08in
  {\bf Case 1:} If $p-\tilde{p}>\tilde{p}$ (i.e., $ \mu>\frac{N+2}{2}$), we take $\eta=16$ and  apply  Lemma \ref{prop:interaction_approx},
 $$
\aligned
\Big(\int_{\mathbb{R}^{N}} (\tilde{U}^{p-\tilde{p}} \tilde{U}_{j}^{\tilde{p}})^{\frac{p+1}{p}}  \,dx \Big)^{\frac{p}{p+1}} & \approx\Big( Q^{\frac{N-2}{2}\min((p-\tilde{p})\frac{p+1}{p}, \tilde{p}\frac{p+1}{p})}\Big)^{\frac{p}{p+1}}\\
&\approx Q^{\frac{N-2}{2}\tilde{p} }\approx Q^{\frac{N-\mu+2}{2}}\lesssim Q \lesssim \delta\approx\varepsilon^{16-\frac{2}{N-2}}.
\endaligned
 $$
Hence,the right hand side of \eqref{eq:3.22} is $o(1)\|\nabla \rho\|_{L^{2}}$ whenever $(16-\frac{2}{N-2})>\tilde{p}=2_{\mu}^{\ast}-1$, which is equivalent to  $ 15N-36>-\mu$. Evidently, this is always true  for $N\geq 3$ and $\mu>0$.

\vskip0.18in
       {\bf Case 2:} If $p-\tilde{p}=\tilde{p}$, i.e., $\mu=\frac{N+2}{2}$,  then  we let  $\eta=16$ and apply Lemma \ref{prop:interaction_approx} getting
 $$
      \aligned
          \Big(\int_{\mathbb{R}^{N}} (\tilde{U}^{p-\tilde{p}} \tilde{U}_{j}^{\tilde{p}})^{\frac{p+1}{p}}  \,dx \Big)^{\frac{p}{p+1}}&\approx\Big( Q^{\frac{N}{2} }\log(\frac{1}{Q})  \Big)^{\frac{p}{p+1}}\approx Q^{\frac{N+2}{4}} \Big(\log(\frac{1}{Q})  \Big)^{\frac{N+2}{2N}}\\
          &\lesssim \delta^{\frac{N+2}{4}} \Big(\log(\frac{1}{\delta})  \Big)^{\frac{N+2}{2N}},
          \text{\quad as \quad} Q<\delta \ll 1,
      \endaligned
  $$
       so the right hand side of \eqref{eq:3.22} is $o(1)\|\nabla \rho\|_{L^{2}}$, which is equivalent to $\frac{N+2}{4}(16-\frac{2}{N-2})>2_{\mu}^{\ast}-1 $ and also to $ 16N^{2}-6N-76>-4\mu$, which  always holds   when  $N\geq 3$ and $\mu>0$.
       \vskip0.18in
    {\bf Case 3:} If $p-\tilde{p}<\tilde{p}$ (equivalently $\mu<\frac{N+2}{2}$), by using Lemma \ref{prop:interaction_approx}, then we get that
 $$
          \Big(\int_{\mathbb{R}^{N}} (\tilde{U}^{p-\tilde{p}} \tilde{U}_{j}^{\tilde{p}})^{\frac{p+1}{p}}\,dx \Big)^{\frac{p}{p+1}}\approx\Big( Q^{\frac{N-2}{2}\min((p-\tilde{p})
          \frac{p+1}{p}, \tilde{p}\frac{p+1}{p}) }\Big)^{\frac{p}{p+1}}\approx Q^{\frac{N-2}{2}(p-\tilde{p}) }\approx Q^{\frac{\mu}{2}}\lesssim \delta^{\frac{\mu}{2}}\approx
          \varepsilon^{\frac{\mu}{2}
          (\eta-\frac{2}{N-2})}.
  $$
     It follows that the right hand side of \eqref{eq:3.22} is $o(1)\|\nabla \rho\|_{L^{2}}$ as long as $ \frac{\mu}{2}(\eta-\frac{2}{N-2})>2_{\mu}^{\ast}-1 $, which is equivalent to $ \mu >\frac{2}{\eta}(1+\frac{4}{N-2})$, where $\frac{2}{\eta}(1+\frac{4}{N-2})\leq \frac{10}{\eta}$ since $N\geq3$. In other words,  for a f{}ixed $\mu\in(0,N)$,  if we choose  $\eta\geq16$ suf{}f{}iciently large such that  $\mu >\frac{2}{\eta}(1+\frac{4}{N-2})$,
      then the right hand side of \eqref{eq:3.22} must be $o(1)\|\nabla \rho\|_{L^{2}}$.

\vskip0.12in
The above three cases show  that the  { second term  $S_{2}$  is $o(1)\|\nabla \rho\|_{L^{2}}$.  Combining $S_1$, $S_2$ and \eqref{eq:3.21}, we have $\big|\int_{\mathbb{R}^{N}} \mathcal{F}_{1} \cdot \xi \Phi \,dx \big|\lesssim o(1) \|\nabla \rho\|_{L^{2}}$.}  Therefore
      \begin{equation}\label{eq:3.23}
         \mathcal{E}_{6}=\Big|\int_{\mathbb{R}^{N}}( \mathcal{C}_{2} + \mathcal{A}_{3}) \cdot \xi \Phi \,dx \Big| \ \lesssim\ \Big|\int_{\mathbb{R}^{N}} \mathcal{F}_{1} \cdot \xi \Phi \,dx\Big|+\Big| \int_{\mathbb{R}^{N}} \mathcal{F}_{2} \cdot \xi \Phi \,dx\Big|\ \lesssim\  o(1) \|\nabla \rho\|_{L^{2}}.
      \end{equation}
      \vskip0.06in
      \noindent$\bullet$ For  $\mathcal{E}_{7}$, we f{}irstly observe that
 $$
      \aligned
      \mathcal{C}_{3} + \mathcal{A}_{4}=\ &\tilde{p} \big(I_{\mu}\ast \sigma^{\tilde{p}+1}\big) \sigma^{\tilde{p}-1}\rho- \tilde{p}\,(I_{\mu}\ast(\alpha \tilde{U})^{\tilde{p}+1})(\alpha \tilde{U})^{\tilde{p}-1}\rho\\
      \approx\ & \big(I_{\mu}\ast \sigma^{\tilde{p}+1}\big) \sigma^{\tilde{p}-1}\rho- (I_{\mu}\ast(\alpha \tilde{U})^{\tilde{p}+1})(\alpha \tilde{U})^{\tilde{p}-1}\rho\\
      =\ & \big(I_{\mu}\ast \sigma^{\tilde{p}+1}\big) (\sigma^{\tilde{p}-1}- (\alpha \tilde{U})^{\tilde{p}-1})\rho+\big(I_{\mu}\ast  (\sigma^{\tilde{p}+1}- (\alpha \tilde{U})^{\tilde{p}+1})\big) (\alpha \tilde{U})^{\tilde{p}-1}\rho\\
      \eqqcolon  & \ \ \mathcal{G}_{1}+\mathcal{G}_{2}.
      \endaligned
 $$
       Since  $|\sigma^{\tilde{p}-1 }-(\alpha \tilde{U})^{\tilde{p}-1} |\lesssim \varepsilon \tilde{U}^{\tilde{p}-1}$ in the region $\{\Phi >0\}$, then
 $$
      \aligned
          \Big|\int_{\mathbb{R}^{N}} \mathcal{G}_{1} \cdot \xi \Phi \,dx \Big|&= \Big|\int_{\mathbb{R}^{N}}\big(I_{\mu}\ast \sigma^{\tilde{p}+1}\big) (\sigma^{\tilde{p}-1}- (\alpha \tilde{U})^{\tilde{p}-1})\rho \cdot \xi \Phi \,dx\Big|\\
          &\lesssim  \varepsilon\int_{\mathbb{R}^{N}}(I_{\mu}\ast\sigma^{\tilde{p}+1})\, \tilde{U}^{\tilde{p}}  |\rho|  \,dx  \lesssim \varepsilon \|\nabla \rho\|_{L^{2}} \approx o(1) \|\nabla \rho\|_{L^{2}}.
      \endaligned
 $$
 On the other hand,
 \begin{equation}\label{eq:3.24}
  \begin{aligned}
  &\Big|\int_{\mathbb{R}^{N}} \mathcal{G}_{2} \cdot \xi \Phi \,dx \Big|\\
  &= \Big|\int_{\mathbb{R}^{N}}\big(I_{\mu}\ast  (\sigma^{\tilde{p}+1}- (\alpha \tilde{U})^{\tilde{p}+1})\big) (\alpha \tilde{U})^{\tilde{p}-1}\rho \cdot \xi \Phi \,dx\Big|\\
  &\leq\Big| \int_{\mathbb{R}^{N}}\big(I_{\mu}\ast \mathds{1}_{\{\tilde{V}< \varepsilon\nu\tilde{U} \}}(\sigma^{\tilde{p}+1}- (\alpha \tilde{U})^{\tilde{p}+1} ) \big)(\alpha \tilde{U})^{\tilde{p}-1}\rho \cdot \xi \Phi \,dx\Big| \\
  &\quad +\Big| \int_{\mathbb{R}^{N}}\big(I_{\mu}\ast \mathds{1}_{\{\tilde{V}\geq \varepsilon\nu\tilde{U} \}}(\sigma^{\tilde{p}+1}- (\alpha \tilde{U})^{\tilde{p}+1} ) \big)(\alpha \tilde{U})^{\tilde{p}-1}\rho \cdot \xi \Phi \,dx\Big|\\
  &\eqqcolon T_1+T_2.
 \end{aligned}
 \end{equation}
To estimate  the f{}irst term $T_1$ above, we note that $|\sigma^{\tilde{p}+1}-(\alpha \tilde{U})^{\tilde{p}+1}|\lesssim \varepsilon \tilde{U}^{\tilde{p}+1}$ in the region $\{\tilde{V}< \varepsilon\nu\tilde{U}\}$,  therefore
 $$
      \aligned
       T_1
      \lesssim   \varepsilon \int_{\mathbb{R}^{N}}\big(I_{\mu}\ast  \tilde{U}^{\tilde{p}+1}\big)  \tilde{U}^{\tilde{p}}|\rho|  \,dx \lesssim \varepsilon\|\nabla \rho\|_{L^{2}}\approx o(1)\|\nabla \rho\|_{L^{2}}.
      \endaligned
 $$
         For the second term $T_2$ above, since $|\sigma^{\tilde{p}+1}-(\alpha \tilde{U})^{\tilde{p}+1}|\lesssim (\frac{1}{\varepsilon})^{\tilde{p}+1} \tilde{V}^{\tilde{p}+1}$   in the region $ \{\tilde{V}\geq  \varepsilon\nu\tilde{U}\}$,  we have that
      \begin{equation}\label{eq:3.25}
      \aligned
          T_2&\lesssim    \int_{\mathbb{R}^{N}}\big(I_{\mu}\ast \mathds{1}_{\{\tilde{V}\geq \varepsilon\nu\tilde{U} \}} \Big(\frac{1}{\varepsilon}\Big)^{\tilde{p}+1} \tilde{V}^{\tilde{p}+1}   \big)(\alpha \tilde{U})^{\tilde{p}-1}|\rho|\cdot|\xi|\Phi  \,dx \\
          &\lesssim    \Big(\frac{1}{\varepsilon}\Big)^{\tilde{p}+1}\sum_{j\neq i}\int_{\mathbb{R}^{N}}\big(I_{\mu}\ast \tilde{U}_{j}^{\tilde{p}+1}\big)\tilde{U}^{\tilde{p}} |\rho|\,dx \approx \Big(\frac{1}{\varepsilon}\Big)^{\tilde{p}+1}\sum_{j\neq i}\int_{\mathbb{R}^{N}} \tilde{U}_{j}^{p-\tilde{p}}\tilde{U}^{\tilde{p}} |\rho|\,dx \\
          &\lesssim   \Big(\frac{1}{\varepsilon}\Big)^{\tilde{p}+1}\sum_{j\neq i}\Big(\int_{\mathbb{R}^{N}} (\tilde{U}_{j}^{p-\tilde{p}} \tilde{U}^{\tilde{p}})^{\frac{p+1}{p}}  \,dx \Big)^{\frac{p}{p+1}} \|\nabla \rho\|_{L^{2}}.
      \endaligned
       \end{equation}
      Recall the relation between $\delta$ and $\varepsilon$(see Remark \ref{power of epilson}), we have $\delta=\varepsilon^{\eta-\frac{2}{N-2}}$, where $\eta\geq 16$ will be determined later.
    \vskip0.18in
  {\bf Case 1:} If $p-\tilde{p}>\tilde{p}$ (equivalently, $ \mu>\frac{N+2}{2}$), then we may take $\eta=16$ and by Lemma \ref{prop:interaction_approx}, we get that
 $$
      \aligned
          \Big(\int_{\mathbb{R}^{N}} (\tilde{U}_{j}^{p-\tilde{p}} \tilde{U}^{\tilde{p}})^{\frac{p+1}{p}}  \,dx \Big)^{\frac{p}{p+1}} & \approx\Big( Q^{\frac{N-2}{2}\min((p-\tilde{p})\frac{p+1}{p}, \tilde{p}\frac{p+1}{p})}\Big)^{\frac{p}{p+1}}\\
          &\approx Q^{\frac{N-2}{2}\tilde{p} }\approx Q^{\frac{N-\mu+2}{2}}\lesssim Q \lesssim \delta \approx\varepsilon^{16-\frac{2}{N-2}},
      \endaligned
  $$
    therefore, the last term of   \eqref{eq:3.25} is $o(1)\|\nabla \rho\|_{L^{2}}$ whenever  $ (16-\frac{2}{N-2})>\tilde{p}+1=2_{\mu}^{\ast},$ which is equivalent to $ 14N-34>-\mu$ and  always holds as long as $N\geq 3$ and $\mu>0$.
        \vskip0.18in
    {\bf Case 2:} If $p-\tilde{p}=\tilde{p}$, i.e., $\mu=\frac{N+2}{2}$, then we  may choose  $\eta=16$. Then by the Lemma \ref{prop:interaction_approx}, we have that
 $$
      \aligned
          \Big(\int_{\mathbb{R}^{N}} (\tilde{U}_{j}^{p-\tilde{p}} \tilde{U}^{\tilde{p}})^{\frac{p+1}{p}}  \,dx \Big)^{\frac{p}{p+1}}&\approx\Big( Q^{\frac{N}{2} }\log(\frac{1}{Q})  \Big)^{\frac{p}{p+1}}\approx Q^{\frac{N+2}{4}} \Big(\log(\frac{1}{Q})  \Big)^{\frac{N+2}{2N}}\\
          &\lesssim \delta^{\frac{N+2}{4}} \Big(\log(\frac{1}{\delta})  \Big)^{\frac{N+2}{2N}}, \text{\quad as \quad} Q<\delta \ll 1.
      \endaligned
 $$
 It follows that the right hand side of \eqref{eq:3.25} is $o(1)\|\nabla \rho\|_{L^{2}}$ whenever $ \frac{N+2}{4}(16-\frac{2}{N-2})>2_{\mu}^{\ast}$, that is $ 16N^{2}-4N-68>-\mu$, which  holds  as long as $N\geq 3$ and $\mu>0$.
        \vskip0.08in
    {\bf Case 3:}  If  $p-\tilde{p}<\tilde{p}$ (we need $\mu<\frac{N+2}{2}$), then by the Lemma
    \ref{prop:interaction_approx}, we see that
 $$
          \Big(\int_{\mathbb{R}^{N}} (\tilde{U}_{j}^{p-\tilde{p}}
          \tilde{U}^{\tilde{p}})^{\frac{p+1}{p}}  \,dx \Big)^{\frac{p}{p+1}}\approx\Big(
             Q^{\frac{N-2}{2}\min((p-\tilde{p})\frac{p+1}{p},
             \tilde{p}\frac{p+1}{p}) }\Big)^{\frac{p}{p+1}}\approx
             Q^{\frac{N-2}{2}(p-\tilde{p}) }\approx Q^{\frac{\mu}{2}}\lesssim
             \delta^{\frac{\mu}{2}}\approx\varepsilon^{\frac{\mu}{2}(\eta-\frac{2}{N-2})}.
 $$
 Note that  $\frac{\mu}{2}(\eta-\frac{2}{N-2})>2_{\mu}^{\ast} \Leftrightarrow  \mu >\frac{4}{\eta}(1+\frac{2}{N-2})$  and $\frac{4}{\eta}(1+\frac{2}{N-2})\leq \frac{12}{\eta}$. Hence,  for a f{}ixed $\mu\in(0,N)$, if we take $\eta\geq 16 $ suf{}f{}iciently large such that $\mu>\frac{12}{\eta}$, then we must have  $\mu >\frac{4}{\eta}(1+\frac{2}{N-2})$ and it follows that  the right hand side of \eqref{eq:3.25} is $o(1)\|\nabla \rho\|_{L^{2}}$.

 \vskip0.23in
 To sum up, the above three cases  imply  that the second term $T_2$  is also  $ o(1)\|\nabla \rho\|_{L^{2}}$.  Combining $T_1$, $T_2$ and \eqref{eq:3.24}, we have $\big|\int_{\mathbb{R}^{N}} \mathcal{G}_{2} \cdot \xi \Phi \,dx \big|\lesssim o(1) \|\nabla \rho\|_{L^{2}}$.  Therefore,
      \begin{equation}\label{eq:3.26}
         \mathcal{E}_{7}=\Big|\int_{\mathbb{R}^{N}}( \mathcal{C}_{3} + \mathcal{A}_{4}) \cdot \xi \Phi \,dx \Big| \ \lesssim\ \Big|\int_{\mathbb{R}^{N}} \mathcal{G}_{1} \cdot \xi \Phi \,dx\Big|+\Big| \int_{\mathbb{R}^{N}} \mathcal{G}_{2} \cdot \xi \Phi \,dx\Big|\ \lesssim\  o(1) \|\nabla \rho\|_{L^{2}}.
      \end{equation}
     \vskip0.06in
      \noindent$\bullet$ For $\mathcal{E}_{8}$, using \eqref{eq:2.7}, we get that
 $$
     \aligned
         \|\nabla(\xi \Phi)\|_{L^{2}}= \|  \Phi \nabla \xi + \xi \nabla \Phi \|_{L^{2}}&\lesssim
         ( \| \nabla \xi\|_{L^{2}} + \|\tilde{U}\|_{L^{p+1}}\| \nabla \Phi \|_{L^{N}(\{\Phi<1\})} )\\
         &\lesssim ( \| \nabla \tilde{U}\|_{L^{2}}+\| \tilde{U}\|_{L^{p+1}}+o(1) \|\tilde{U}\|_{L^{p+1}})\lesssim 1.
     \endaligned
 $$
     Therefore,
     \begin{equation}\label{eq:3.27}
     \aligned
         \mathcal{E}_{8}&=\Big| \int_{\mathbb{R}^{N}}  \mathcal{A}_{5}\cdot \xi\Phi  \,dx \Big| =\Big| \int_{\mathbb{R}^{N}}  f\cdot \xi\Phi  \,dx \Big|\\
         &= \Big| \int_{\mathbb{R}^{N}}   (-\Delta u-(I_{\mu}\ast |u|^{\tilde{p}+1})|u|^{\tilde{p}-1}u)\cdot \xi\Phi\,dx \Big|\lesssim \Theta(u)\|\nabla(\xi \Phi)\|_{L^{2}} \lesssim  \Theta(u).
     \endaligned
     \end{equation}
     Unite the above estimates of $\mathcal{E}_{1}\sim\mathcal{E}_{8}$, i.e., \eqref{eq:3.18}, \eqref{eq:3.19}, \eqref{eq:3.20}, \eqref{eq:3.23}, \eqref{eq:3.26} and \eqref{eq:3.27}, and note that only the term $\mathcal{E}_{3}$ requires  additional condition $\mu<4$ for our purpose, then it follows from \eqref{eq:3.17} that
     \begin{equation}\label{eq:3.28}
      \aligned
      & \Big|\int_{\mathbb{R}^{N}}  \Big[(\alpha-\alpha^{2\tilde{p}+1}+o(1) )  \tilde{U}^{p} - (\tilde{p}\alpha^{2\tilde{p}}+o(1))\tilde{U}^{p-1}\tilde{V} \Big]  \xi \Phi\,dx \Big| \\
       \lesssim \ \ & o(1)\|\nabla \rho\|_{L^{2}}+\|\nabla \rho\|_{L^{2}}^{\min(\tilde{p}, 2) }+\Theta(u).
      \endaligned
      \end{equation}
      Let us now split $\tilde{V}=\tilde{V}_1+\tilde{V}_2$ where $\tilde{V}_1\coloneqq\sum\limits_{j<i}\alpha_j \tilde{U}_j$ and
     $\tilde{V}_2\coloneqq\sum\limits_{j>i}\alpha_j \tilde{U}_j$.
     If $i=1$, then $\tilde{V}_1=0, \tilde{V}=\tilde{V}_{2}$ and  the following arguments will prove the results for the case $i=1$.
     If $i>1$, then by induction assumption that for any f{}ixed $j<i$, \eqref{eq:3.12} holds for any $i\neq j$. Then
     \begin{equation}\label{eq:3.29}
         \aligned
      \int_{\R^N}\tilde{U}^{p}\tilde{U}_j\,dx&= \int_{\R^N} \tilde{U}_j^{p}\tilde{U}\,dx \lesssim o(1)\|\nabla\rho\|_{L^2}
          +\|\nabla \rho\|_{L^{2}}^{\min(\tilde{p}, 2) }+\Theta(u).
         \endaligned
     \end{equation}
    After taking the  summation with respect to $j<i$ and  using $|\xi|\lesssim \tilde{U}$, we obtain that
     $$
     \aligned \int_{\R^N}\tilde{U}^{p-1}\tilde{V}_1\abs{\xi}\Phi\,dx
     &\lesssim \int_{\R^N}\tilde{U}^{p}\tilde{V}_1\,dx \lesssim  o(1)\|\nabla\rho\|_{L^2}+\|\nabla \rho\|_{L^{2}}^{\min(\tilde{p}, 2) } +\Theta(u).\endaligned
      $$
     Thus it follows from $\eqref{eq:3.28}$ that
     \begin{equation}\label{eq:3.30}
         \aligned
         & \Big|\int_{\mathbb{R}^{N}}  \Big[(\alpha-\alpha^{2\tilde{p}+1}+o(1) )  \tilde{U}^{p} - (\tilde{p}\alpha^{2\tilde{p}}+o(1))\tilde{U}^{p-1}\tilde{V}_{2} \Big]  \xi \Phi\,dx \Big| \\
       \lesssim \ \ & o(1)\|\nabla \rho\|_{L^{2}}+\|\nabla \rho\|_{L^{2}}^{\min(\tilde{p},2)}+\Theta(u).
         \endaligned
     \end{equation}
     Next we verify that
     \begin{equation}\label{eq:3.31}
     \tilde{V}_2(x)\Phi(x) = (1+o(1))\tilde{V}_2(0)\Phi(x), \quad\quad  \hbox{ for\, any } x\in\R^N.
     \end{equation}

     \textbf{Case 1:} If $\Phi(x)=0$, then \eqref{eq:3.31} holds.

     \textbf{Case 2:} If $\Phi(x)\neq0$, by the construction of $\Phi$, we know $\{\Phi>0\}\subseteq B(0,R)$, where the notation $R$ is from Lemma \ref{lem:localization}, then it follows from $0,x\in B(0,R)$ and the Claim 2 that
$$
         \frac{U_{j}(x)}{U_{j}(0)},\ \frac{U_{j}(0)}{U_{j}(x)} \leq \frac{\sup_{B(0,R)}U_j}{\inf_{B(0,R)}U_j}\le 1+\varepsilon
 $$
     for any $j\neq i$ satisfying $\lambda_{j}\leq \lambda_{i}$. Therefore,  $U_{j}(x)=(1+o(1))U_{j}(0)$ and hence $$\sum_{j>i}\alpha_{j}U_{j}(x)=(1+o(1))\sum_{j>i}\alpha_{j}U_{j}(0).$$
  It follows that $\tilde{V}_2(x)\Phi(x)=(1+o(1))\tilde{V}_2(0)\Phi(x)$.

 \vskip0.2in

Thus, the above two cases show that $\tilde{V}_2(x)\Phi(x) = (1+o(1))\tilde{V}_2(0)\Phi(x)$. Then it follows from $\eqref{eq:3.30}$ that
     \begin{equation}\label{eq:3.32}
         \aligned
         & \Big|\int_{\mathbb{R}^{N}}  \Big[(\alpha-\alpha^{2\tilde{p}+1}+o(1) )  \tilde{U}^{p} - (\tilde{p}\alpha^{2\tilde{p}}+o(1))\tilde{U}^{p-1}\tilde{V}_{2}(0) \Big]  \xi \Phi\,dx \Big| \\
       & \lesssim  o(1)\|\nabla \rho\|_{L^{2}}+\|\nabla \rho\|_{L^{2}}^{\min(\tilde{p}, 2)}+\Theta(u).
         \endaligned
     \end{equation}

\vskip0.3in

We now prove \eqref{eq:3.11} of Lemma \ref{prop:interaction_and_coef}. If $\alpha=1$, then there is nothing to
     prove. Therefore, we assume that $\alpha \neq 1$ and def{}ine $\displaystyle \theta\coloneqq\frac{\tilde{p}\alpha^{2\tilde{p}}\tilde{V}_2(0)}{\alpha-\alpha^{2\tilde{p}+1}}$, then $\theta$ is independent of  $\xi=\tilde{U}$ or $\xi=\partial_\lambda\tilde{U}$.
     It follows from \eqref{eq:3.32} that
     \begin{equation}\label{eq:3.33}
     \aligned
     \big|\alpha-\alpha^{2\tilde{p}+1}\big|\cdot
     \Big|\int_{\R^N}(1+o(1))(\tilde{U}^{p}-\theta \tilde{U}^{p-1})
     \xi\Phi\,dx \Big|  \lesssim
      o(1)\|\nabla \rho\|_{L^{2}}+\|\nabla \rho\|_{L^{2}}^{\min(\tilde{p},2)}+\Theta(u) .
     \endaligned
     \end{equation}
By the Lemma \ref{lem:localization}-\ref{it:localization1}, we have
\begin{equation}\label{eq:3.34}
 \int_{\R^N}\bigl(1+o(1))(\tilde{U}^{p}-\theta \tilde{U}^{p-1}\bigr)\xi\Phi\,dx
  =\int_{\R^N} \tilde{U}^{p}\xi\,dx-\theta\int_{\R^N}\tilde{U}^{p-1}\xi\,dx+ o(1).
 \end{equation}
 Take $\xi=\partial_{\lambda}\tilde{U}$, then $$\aligned \int_{\R^N}
         \tilde{U}^{p}\xi\,dx =\int_{\R^N}
         \tilde{U}^{p}\partial_{\lambda}\tilde{U} \,dx\approx\partial_{\lambda}\Big( \int_{\R^N}
         U[0,\lambda]^{p+1} \,dx \Big)=\partial_{\lambda}\Big( \int_{\R^N}
         U[0,1]^{p+1} \,dx \Big)=0, \endaligned $$ and
         $$\aligned\int_{\R^N}
         \tilde{U}^{p-1}\xi\,dx &=\int_{\R^N}
         \tilde{U}^{p-1}\partial_{\lambda}\tilde{U} \,dx\approx\partial_{\lambda}\Big( \int_{\R^N}
         U[0,\lambda]^{p} \,dx \Big)\\
         &=\partial_{\lambda}\Big( \int_{\R^N} \lambda^{\frac{N-2}{2}p-N}
         U[0,1]^{p} \,dx \Big)= \frac{2-N}{2}\int_{\R^N}U^{p}\,dx \not = 0. \endaligned$$
Then it follows from \eqref{eq:3.33} that
         \begin{equation}\label{eq:3.35}
         \aligned
      o(1)\|\nabla \rho\|_{L^{2}}+\|\nabla \rho\|_{L^{2}}^{\min(\tilde{p},2)}+\Theta(u)
     &\gtrsim \big|\alpha-\alpha^{2\tilde{p}+1}\big|\cdot
     \Big|0-\theta \int_{\R^N}
         \tilde{U}^{p-1}\xi\,dx+o(1)\Big|\\
         &\gtrsim |\theta|\cdot \big|\alpha-\alpha^{2\tilde{p}+1}\big|.\\
         \endaligned
         \end{equation}
          Take $\xi=\tilde{U}$ in \eqref{eq:3.33}, by \eqref{eq:3.33} \eqref{eq:3.34} and \eqref{eq:3.35}, we have that
        $$
         \aligned
          &\big|\alpha-\alpha^{2\tilde{p}+1}\big|\cdot \Big|\frac{1}{2}\int_{\R^N} \tilde{U}^{p+1}  \,dx \Big|\\
          &\lesssim \big|\alpha-\alpha^{2\tilde{p}+1}\big|\cdot \Big|\int_{\R^N} \tilde{U}^{p+1} \,dx-\theta \int_{\R^N} \tilde{U}^{p} \,dx+o(1)\Big| + |\theta|\cdot \big|\alpha-\alpha^{2\tilde{p}+1}\big|\cdot\Big|\int_{\R^N} \tilde{U}^{p} \,dx\Big| \\
          &\lesssim   o(1)\|\nabla \rho\|_{L^{2}}+\|\nabla \rho\|_{L^{2}}^{\min(\tilde{p}, 2)}+\Theta(u).\\
         \endaligned
 $$
Therefore
\begin{equation}\label{eq:3.36}
\big|\alpha-\alpha^{2\tilde{p}+1}\big|\lesssim  o(1)\|\nabla \rho\|_{L^{2}}+\|\nabla \rho\|_{L^{2}}^{\min(\tilde{p},2) }+\Theta(u).
\end{equation}
Finally, we observe  that the function $g(\alpha)\coloneqq\frac{|\alpha-1|}{|\alpha-\alpha^{2\tilde{p}+1}|}$ is
      bounded in $\big[\frac{1}{2},\frac{3}{2}\big]\setminus\{1\}$, thus
      $$
          |\alpha-1|= \frac{|\alpha-1|}{|\alpha-\alpha^{2\tilde{p}+1}|} \cdot \big|\alpha-\alpha^{2\tilde{p}+1}\big|\lesssim  o(1)\|\nabla \rho\|_{L^{2}}+\|\nabla \rho\|_{L^{2}}^{\min(\tilde{p}, 2)}+\Theta(u).
  $$
We then  get  \eqref{eq:3.11} of Lemma \ref{prop:interaction_and_coef}.

\vskip0.12in
We now prove \eqref{eq:3.12} of Lemma \ref{prop:interaction_and_coef}. Since for any $j\neq i$, $\alpha_{j}\geq \frac{1}{2}$, using  \eqref{eq:3.28} with $\xi= \tilde{U}$ and $\eqref{eq:3.36}$, we obtain that
     \begin{equation}\label{eq:3.37}
         \aligned
         &\int_{\R^N}\tilde{U}^{p}\tilde{U}_{j}\Phi\,dx \lesssim \left|\int_{\R^N}\tilde{U}^{p}\tilde{V}\Phi\,dx\right|\lesssim \big(\tilde{p}\alpha^{2\tilde{p}}+o(1)\big) \left|\int_{\R^N}\tilde{U}^{p}\tilde{V}\Phi\,dx\right|\\
        &  \lesssim  \big|\alpha-\alpha^{2\tilde{p}+1}+o(1) \big|\Big|\int_{\mathbb{R}^{N}}  \tilde{U}^{p+1} \,dx\Big|+\big|\alpha-\alpha^{2\tilde{p}+1}+o(1) \big|\Big|\int_{\mathbb{R}^{N}}  \tilde{U}^{p} \tilde{U}(\Phi-1)\,dx\Big| \\
          &  \quad + \Big|\int_{\mathbb{R}^{N}}  \Big[(\alpha-\alpha^{2\tilde{p}+1}+o(1) )  \tilde{U}^{p} - (\tilde{p}\alpha^{2\tilde{p}}+o(1))\tilde{U}^{p-1}\tilde{V} \Big]  \tilde{U}\Phi\,dx \Big| \\
         & \lesssim  2(\alpha-\alpha^{2\tilde{p}+1} )  \|\tilde{U}\|_{L^{p+1}}^{p+1} + o(1)\|\nabla \rho\|_{L^{2}}+\|\nabla \rho\|_{L^{2}}^{\min(\tilde{p}, 2)}+\Theta(u)\\
         & \lesssim  o(1)\|\nabla \rho\|_{L^{2}}+\|\nabla \rho\|_{L^{2}}^{\min(\tilde{p}, 2)}+\Theta(u).
         \endaligned
     \end{equation}
     On the other hand, we recall that 
     $\Phi\geq 0 $ on $\mathbb{R}^{N}$
      and that $$\{\Phi=1\}=B(0,\varepsilon R)
      \setminus \bigcup_{j\in G}B(z_{j},
      \varepsilon^{-1}R_{j}),$$ where the 
      notations come from Lemma 
      \ref{lem:localization}. For 
      any  $\lambda_{j}\leq\lambda_{i}= 
      1 (\iff j>i)$, using the facts 
      $\tilde{U}\lesssim 1$, $\tilde{U}_{j}
      \lesssim \lambda_{j}^{\frac{N-2}{2}}$, 
      the Claim 3 and the choice of $R=\epsilon^{-2}\gg \varepsilon^{-2}$ in \eqref{eq:3.1}, we obtain that
 $$
         \aligned
         \int_{\R^N}\tilde{U}^{p}\tilde{U}_{j}\Phi\,dx &\geq \int_{B(0,\varepsilon R)\setminus \bigcup\limits_{j\in G}B(z_{j},\varepsilon^{-1}R_{j})}  \tilde{U}^{p}\tilde{U}_{j} \,dx =\int_{B(0,\varepsilon R)} \tilde{U}^{p}\tilde{U}_{j} \,dx -\sum_{j\in G}\int_{B(z_{j},\varepsilon^{-1}R_{j})} \tilde{U}^{p}\tilde{U}_{j} \,dx\\
         &\gtrsim \int_{B(0,\varepsilon R)} \tilde{U}^{p}\tilde{U}_{j} \,dx-\sum_{j\in G} \lambda_{j}^{\frac{N-2}{2}}(\varepsilon^{-1}R_{j})^{N} \geq \int_{B(0,\varepsilon^{-1})} \tilde{U}^{p}\tilde{U}_{j} \,dx-\nu (\varepsilon^{-1}\varepsilon^{2})^{N} \\
         &\geq \frac{1}{2}\int_{B(0,1)} \tilde{U}^{p}\tilde{U}_{j} \,dx
         \text{\quad \quad as}\;\varepsilon\ll 1.
         \endaligned
   $$
  Further, combining  with  \eqref{eq:3.37}, we deduce that for any $j>i$, it holds
   $$
         \aligned
         \int_{B(0,1)} \tilde{U}^{p}\tilde{U}_{j} \,dx\lesssim o(1)\|\nabla \rho\|_{L^{2}}+\|\nabla \rho\|_{L^{2}}^{\min(\tilde{p}, 2) }+\Theta(u).
         \endaligned
   $$
      Thanks to Lemma \ref{cor:interaction_integral_localized}, we deduce
       \eqref{eq:3.12} of Lemma  \ref{prop:interaction_and_coef} holds for all $\lambda_{j}\leq\lambda_{i}=1\; (j > i)$. If $i=1$, then we f{}inish the proof of   \eqref{eq:3.12} of Lemma  \ref{prop:interaction_and_coef}. If $i>1$, then by \eqref{eq:3.29} we also deduce
       \eqref{eq:3.12} holds for all $j<i$. In summary, \eqref{eq:3.12}   of Lemma  \ref{prop:interaction_and_coef} holds true for all  $j\neq i$.  We now f{}inish the proof of Lemma  \ref{prop:interaction_and_coef}.
     \end{proof}


\vskip0.3in

\section{Quantitative stability estimate}\label{sec-quantitative}

This section is devoted to the proof of Theorem \ref{thm:main_close}. Since there appears convolution type nonlinear term, some new estimates need to be established. Our aim is to show, in a neighborhood of a $\delta$-interacting family of Talenti bubbles, that the quantity $\big\|\Delta u+(I_\mu\ast|u|^{\tilde{p}+1} )|u|^{\tilde{p}-1}u\big\|_{(\mathcal{D}^{1,2})^{-1}}\eqqcolon \Theta(u)$ controls the $\mathcal{D}^{1,2}$-distance of $u$ from the manifold of sums of the Talenti bubbles. Let us brief{}ly explain the main steps of the proof. Firstly we consider the sum of the Talenti bubbles $\sigma$ that minimizes the distance from $u$. Then, setting $u=\sigma+\rho$ and testing  $\Delta u +(I_{\mu}\ast|u|^{2_\mu^*})|u|^{2_\mu^*-2}u$ against $\rho$. By doing so we obtain an estimate on $\|\nabla\rho\|_{L^2}$, namely \eqref{eq:4.4}. From there, we expand the nonlinear term on  the right hand side of \eqref{eq:4.4}, which has double integral nonlinearity, as the nineteen terms $(\mathcal{H}_{1}\sim \mathcal{H}_{19})$. We estimate them separately, where the Lemma \ref{prop:interaction_approx} and the interaction integral estimate \eqref{eq:3.12} of Lemma \ref{prop:interaction_and_coef} are the main tools.
\vskip0.18in
\noindent{\bf Proof of Theorem \ref{thm:main_close}.}
    Observe that the left hand side of  $\eqref{eq:1.18}$ and $\eqref{eq:1.19}$ is bounded from above, so up to enlarging the constant $C$, we can assume
    \begin{equation}\label{eq:4.1}
        \Theta(u)\leq 1.
    \end{equation}
Let $\sigma=\sum\limits_{i=1}^\nu \alpha_i \tilde{U}[z_i,\lambda_i]$ be the linear combination of the Talenti bubbles from \eqref{eq:1.14} which is closest to $u$ in the $\mathcal{D}^{1,2}(\R^N)$-norm, that is,
  $$ \big\|\nabla u-\nabla\sigma\big\|_{L^2} =
  \min_{\substack{\hat{\alpha}_1,\dots,\hat{\alpha}_\nu\in\R\\
  \hat{ z}_1,\dots,\dots,\hat{ z}_\nu\in\R^N\\
  \hat{ \lambda}_1,\dots,\hat{\lambda}_\nu>0}} \Big\|\nabla u-\nabla
  \Big(\sum_{i=1}^\nu \hat{\alpha}_i \tilde U
  [\hat{z}_i, \hat{\lambda}_i]\Big)\Big\|_{L^2}.$$
Let $\rho\coloneqq u-\sigma$ and denote $\tilde{U}_i\coloneqq \tilde{U}[z_i,\lambda_i]$. It follows directly from the condition \eqref{eq:1.17} that $\|\nabla\rho\|_{L^2}\le \delta$, which implies $\|\nabla\rho\|_{L^2}=o(1)$. By the def{}inition \eqref{eq:1.15}, the quantity \eqref{eq:2.11} satisf{}ies   $Q=o(1)$. Furthermore, since the bubbles $\hat{ U}_i$ are $\delta$-interacting, the family $(\alpha_i,\tilde{U}_i)_{1\le i \le \nu}$ is $\delta^{\prime}$-interacting for some $\delta'$ which goes to zero as $\delta$ goes to $0$.

  \vskip0.18in
Since $\sigma$ minimizes the $\mathcal{D}^{1,2}(\R^N)$-distance from $u$, $\rho$ is $\mathcal{D}^{1,2}(\R^N)$-orthogonal
 to the manifold composed of linear combinations of  the $\nu$ Talenti bubbles. Def{}ine
  $$h=h(\hat{\alpha}_1,\cdots,\hat{\alpha}_\nu,\hat{\lambda}_1,
  \cdots,\hat{\lambda}_\nu,\hat{z}_1,\cdots,\hat{z}_\nu)\coloneqq u- \sum_{i=1}^{\nu}
  \hat{\alpha}_i \tilde{U}[\hat{z}_{i},\hat{\lambda}_{i}] ,$$
  in particular $ h(\alpha_1,\cdots,\alpha_\nu,\lambda_1,\cdots,\lambda_\nu,z_1,\cdots,z_\nu)=\rho$, then by the def{}inition of $\sigma$ and $\rho$, $\|\nabla \rho\|^{2}_{L^{2}}$ is the minimum of $\|\nabla h\|^{2}_{L^{2}}$ , which is a function over $\mathbb{R}^{\nu(N+2)}$. Thus $(\alpha_1,\cdots,\alpha_\nu,\lambda_1,\cdots,\lambda_\nu,z_1,\cdots,z_\nu)$ is a critical point of the function $\|\nabla h\|^{2}_{L^{2}}$.  For f{}ixed $1\leq i\leq \nu $, let
$$\partial\coloneqq\partial_{\hat\alpha_{i}}\big|_{(\alpha_1,\cdots,\alpha_\nu,z_1,
\cdots,z_\nu,\lambda_1,\cdots,\lambda_\nu)}\ \ \hbox{or}\ \
\partial_{\hat \lambda_{i}}\big|_{(\alpha_1,\cdots,\alpha_\nu,z_1,
\cdots,z_\nu,\lambda_1,\cdots,\lambda_\nu)}\ \ \hbox{or}\ \
\partial_{(\hat z_{i})_{j}}\big|_{(\alpha_1,\cdots,\alpha_\nu,z_1,
\cdots,z_\nu,\lambda_1,\cdots,\lambda_\nu)},$$ where $1\leq j\leq N$, then
\begin{align}
0=\partial(\|\nabla h\|^{2}_{L^{2}})=\partial\int_{\mathbb{R}^{N}}|\nabla h|^{2}\,dx
=\int_{\mathbb{R}^{N}} \partial|\nabla h|^{2}\,dx=
2\int_{\mathbb{R}^{N}}\nabla \rho\cdot \nabla \partial h\,dx,
\end{align}
Further, the following $N+2$ orthogonality conditions hold:
  \begin{equation} \label{eq:4.2}
  \int_{\R^N}\nabla\rho\cdot\nabla \tilde{U}_i\,dx= 0,
  \
  \int_{\R^N}\nabla\rho\cdot\nabla \partial_\lambda \tilde{U}_i\,dx= 0,
  \
  \int_{\R^N}\nabla\rho\cdot\nabla \partial_{z_j}\tilde{U}_i\,dx= 0\ \  \text{for any}\ \  1\le j\le N.
  \end{equation}
Moreover,  by using integration by parts and noting that the functions $\tilde{U}_i, \partial_\lambda \tilde{U}_i, \partial_{z_j}\tilde{U}_i$ satisfy \eqref{eq:2.1}, \eqref{eq:2.4} and \eqref{eq:2.5} respectively, we know that the above orthogonality conditions are equivalent to
 \begin{equation}\label{eq:4.3}
   \int_{\R^N}(I_\mu\ast\tilde{U_{i}}^{\tilde{p}+1} )\tilde{U_{i}}^{\tilde{p}}\rho\,dx=0,
 \end{equation}
$$
      \int_{\R^N}\big[(\tilde{p}+1)(I_{\mu}\ast\tilde{U}_{i}^{\tilde{p}}\partial_\lambda\tilde{U}_{i})\tilde{U}_{i}^{\tilde{p}}+\tilde{p}(I_{\mu}\ast\tilde{U}_{i}^{\tilde{p}+1})\tilde{U}_{i}^{\tilde{p}-1}\partial_\lambda\tilde{U}_{i}\big]\, \rho\,dx= 0,
  $$
    $$
      \int_{\R^N}\big[  (\tilde{p}+1)(I_{\mu}\ast\tilde{U}_{i}^{\tilde{p}}\partial_{z_j}\tilde{U}_{i})\tilde{U}_{i}^{\tilde{p}}+\tilde{p}(I_{\mu}\ast\tilde{U}_{i}^{\tilde{p}+1})\tilde{U}_{i}^{\tilde{p}-1}\partial_{z_j}\tilde{U}_{i} \big]\, \rho\,dx= 0 \quad\text {for any}\ \ 1\le j\le N.
     $$
Our goal is to show that $\|\nabla\rho\|_{L^2}$ is controlled by
  $$ \big\|\Delta u +(I_\mu\ast|u|^{\tilde{p}+1} )
  |u|^{\tilde{p}-1}u\big\|_{(\mathcal{D}^{1,2})^{-1}}\eqqcolon \Theta(u).$$
  To achieve this, testing $\Delta u +(I_\mu\ast|u|^{\tilde{p}}u )
  |u|^{\tilde{p}-1}u$ by  $\rho$ and exploiting the orthogonality condition \eqref{eq:4.2} yield
  \begin{equation}\label{eq:4.4}
  \begin{aligned}
   \int_{\R^N}\abs{\nabla\rho}^2\,dx&=
   \int_{\R^N} \nabla (\sigma +\rho)\cdot\nabla\rho\,dx =\int_{\R^N} \nabla u\cdot\nabla\rho\,dx\\
   &=\int_{\R^N}(I_\mu\ast|u|^{\tilde{p}}u )|u|^{\tilde{p}-1}u\rho\,dx -\int_{\R^N}\left(
     \Delta u+(I_\mu\ast|u|^{\tilde{p}}u )
    |u|^{\tilde{p}-1}u\right)\rho\,dx\\
&\le\int_{\R^N}(I_\mu\ast|u|^{\tilde{p}}u )|u|^{\tilde{p}-1}u\rho\,dx + \Theta(u)\|\nabla\rho\|_{L^2}.
    \end{aligned}
  \end{equation}
To control the f{}irst term
  $$
  \int_{\R^N}(I_\mu\ast|u|^{\tilde{p}}u )
      |u|^{\tilde{p}-1}u\rho\,dx,
   $$
  we apply \eqref{eq:2.13} and  \eqref{eq:2.18} with $r=\tilde{p}+1$ (here we require $\mu\leq N+2$), $l=0$, $a=\sigma$, $b=\rho$  and $a_i=\alpha_i \tilde{U}_i$ and  deduce that
  \begin{equation}\label{eq:4.5}
   \Bigl||u|^{\tilde{p}}u - \sum_{i=1}^\nu\alpha_i|\alpha_i|^{\tilde{p}}
  \tilde{U}_i^{\tilde{p}+1}-(\tilde{p}+1)\sigma^{\tilde{p}}\rho\Bigr|\le
   C_{N,\nu}\Bigl(\{\sigma^{\tilde{p}-1}|\rho|^2\}_{\tilde{p}+1>2} + |\rho|^{\tilde{p}+1} +
  \sum_{ i\not= j}
  \tilde{U}_i^{\tilde{p}}\tilde{U}_j\Bigr).
  \end{equation}
Similarly, we combine  \eqref{eq:2.13} and \eqref{eq:2.18} with $r=\tilde{p}$ (we need that $\mu\leq 4$), $l=0$, $a=\sigma$, $b=\rho,$
and $a_i=\alpha_i \tilde{U}_i$, it follows that
  \begin{equation}\label{eq:4.6}
   \Bigl||u|^{\tilde{p}-1}u - \sum_{i=1}^\nu\alpha_i|\alpha_i|^{\tilde{p}-1}
  \tilde{U}_i^{\tilde{p}}-\tilde{p}\sigma^{\tilde{p}-1}\rho \Bigr|\le C_{N,\nu}\Bigl(\{\sigma^{\tilde{p}-2}|\rho|^2\}_{\tilde{p}>2} + |\rho|^{\tilde{p}} +
   \sum_{ i\not= j}
  \tilde{U}_i^{\tilde{p}-1}\tilde{U}_j\Bigr),
 \end{equation}
 here we recall  the following equivalent conditions:
  $$\tilde{p}+1>2\iff \mu<4, \quad\quad\quad   \tilde{p}>2\iff \mu<6-N ( \hbox{see   Table} \ref{tab:Equivalent with-p wan-mu}).$$
 Therefore,  we f{}ind that
 \begin{equation}\label{eq:4.7}
   \aligned
 &\ \int_{\R^N} (I_\mu\ast |u|^{\tilde{p}}u )|u|^{\tilde{p}-1}u\rho\,dx\\
 & =   \int_{\R^N}\Big(I_{\mu}\ast \bigl(|u|^{\tilde{p}}u - \sum_{i=1}^\nu\alpha_i|\alpha_i
    |^{\tilde{p}}\tilde{U}_i^{\tilde{p}+1}
    \bigr)\Big)\Big(|u|^{\tilde{p}-1}u - \sum_{i=1}^\nu\alpha_i|\alpha_i|^{\tilde{p}-1}
    \tilde{U}_i^{\tilde{p}}   \Big)\rho\,dx \\
 &\quad +  \int_{\R^N}\Big(I_{\mu}\ast\big(|u|^{\tilde{p}}u - \sum_{i=1}^\nu\alpha_i|\alpha_i|^{\tilde{p}}
    \tilde{U}_i^{\tilde{p}+1}
    \big)\Big)\Big( \sum_{i=1}^\nu\alpha_i|\alpha_i|^{\tilde{p}-1}
    \tilde{U}_i^{\tilde{p}}   \Big)\rho\,dx\\
 &\quad +  \int_{\R^N}\Big(I_{\mu}\ast\sum_{i=1}^\nu\alpha_i|\alpha_i|^{\tilde{p}}
    \tilde{U}_i^{\tilde{p}+1}\Big)\Big(|u|^{\tilde{p}-1}u - \sum_{i=1}^\nu\alpha_i|\alpha_i|^{\tilde{p}-1}
    \tilde{U}_i^{\tilde{p}}   \Big)\rho\,dx\\
 &\quad + \int_{\R^N}\Big(I_{\mu}\ast\sum_{i=1}^\nu\alpha_i|\alpha_i|^{\tilde{p}}
    \tilde{U}_i^{\tilde{p}+1}\Big)\Big( \sum_{i=1}^\nu\alpha_i|\alpha_i|^{\tilde{p}-1}
    \tilde{U}_i^{\tilde{p}}   \Big)\rho\,dx \\
 &\lesssim    \int_{\R^N}\Big(I_{\mu}\ast\big(\sigma^{\tilde{p}}|\rho|+
    \{\sigma^{\tilde{p}-1}|\rho|^2\}_{\tilde{p}+1>2} + |\rho|^{\tilde{p}+1} +
    \sum\limits_{i\not= j}
    \tilde{U}_i^{\tilde{p}}\tilde{U}_j\big)\Big)  \Big(\sigma^{\tilde{p}-1}|\rho|+ \{
    \sigma^{\tilde{p}-2}|\rho|^2\}_{\tilde{p}>2}\\
 & \quad  + |\rho|^{\tilde{p}}  +\sum_{ i\not= j}
    \tilde{U}_i^{\tilde{p}-1}\tilde{U}_j  \Big)|\rho|\,dx + \int_{\R^N}\Big(I_{\mu}\ast\big(|u|^{\tilde{p}}u - \sum_{i=1}^\nu\alpha_i|\alpha_i|^{\tilde{p}}
    \tilde{U}_i^{\tilde{p}+1}
    \big)\Big)\Big( \sum_{i=1}^\nu\alpha_i|\alpha_i|^{\tilde{p}-1}
    \tilde{U}_i^{\tilde{p}}   \Big)\rho\,dx \\
 & \quad+  \int_{\R^N}\Big(I_{\mu}\ast\sum_{i=1}^\nu\alpha_i|\alpha_i|^{\tilde{p}}
    \tilde{U}_i^{\tilde{p}+1}\Big)\Big(|u|^{\tilde{p}-1}u - \sum_{i=1}^\nu\alpha_i|\alpha_i|^{\tilde{p}-1}
    \tilde{U}_i^{\tilde{p}}   \Big)\rho\,dx\\
 &\quad +  \int_{\R^N}\Big(I_{\mu}\ast\sum_{i=1}^\nu\alpha_i|\alpha_i|^{\tilde{p}}
    \tilde{U}_i^{\tilde{p}+1}\Big)\Big( \sum_{i=1}^\nu\alpha_i|\alpha_i|^{\tilde{p}-1}
    \tilde{U}_i^{\tilde{p}}   \Big)\rho\,dx \\
 &  \approx    \int_{\R^N} (I_{\mu}\ast\sigma^{\tilde{p}}|\rho| ) \sigma^{\tilde{p}-1}|\rho|^2  \,dx
        +\Big\{\int_{\R^N}  (I_\mu\ast\sigma^{\tilde{p}}|\rho| ) \sigma^{\tilde{p}-2}|\rho|^3   \,dx\Big\}_{\tilde{p}>2}
         +\int_{\R^N}  (I_\mu\ast\sigma^{\tilde{p}}|\rho| )|\rho|^{\tilde{p}+1}
        \,dx\\
 &\quad +\sum_{ i\not= j}\int_{\R^N} (I_\mu\ast \sigma^{\tilde{p}}|\rho| )
        \tilde{U}_i^{\tilde{p}-1}\tilde{U}_j|\rho|  \,dx +\Big\{\int_{\R^N} (I_\mu\ast
        \sigma^{\tilde{p}-1}|\rho|^2 )\sigma^{\tilde{p}-1}|\rho|^2  \,dx\Big\}_{\tilde{p}+1>2}    \\
 &\quad +\Big\{\int_{\R^N}  (I_\mu\ast \sigma^{\tilde{p}-1}|\rho|^2 )
        \sigma^{\tilde{p}-2}|\rho|^3   \,dx \Big\}_{\tilde{p}+1>2,\, \tilde{p}>2}
          + \Big\{\int_{\R^N}  (I_\mu\ast 
          \sigma^{\tilde{p}-1}|\rho|^2 )
          |\rho|^{\tilde{p}+1}  \,dx \Big\}_{\tilde{p}
          +1>2}\\
 \ &\quad +\sum_{ i\not= j} \Big\{\int_{\R^N}  (I_\mu\ast  \sigma^{\tilde{p}-1}|\rho|^2 )
          \tilde{U}_i^{\tilde{p}-1}\tilde{U}_j|\rho| 
           \,dx\Big\}_{\tilde{p}+1>2}  
           +\int_{\R^N} (I_\mu\ast
            |\rho|^{\tilde{p}+1} )
            \sigma^{\tilde{p}-1}|\rho|^2  \,dx
           \endaligned
    \end{equation}
   $$
   \aligned
 \ &\quad +\Big\{\int_{\R^N}  ( I_{\mu}\ast |\rho|^{\tilde{p}+1}  )\sigma^{\tilde{p}-2}|\rho|^3
        \,dx\Big\}_{\tilde{p}>2}  +\int_{\R^N} ( I_{\mu}\ast |\rho|^{\tilde{p}+1} )|\rho|^{\tilde{p}+1}  \,dx  \\
\ &\quad +\sum_{ i\not= j}\int_{\R^N} ( I_{\mu}\ast
           |\rho|^{\tilde{p}+1} )
        \tilde{U}_i^{\tilde{p}-1}\tilde{U}_j|\rho|  \,dx
       +\sum_{ i\not= j}\int_{\R^N}
         \bigl(I_{\mu}\ast
        \tilde{U}_i^{\tilde{p}}\tilde{U}_j \bigr)
        \sigma^{\tilde{p}-1}|\rho|^2  \,dx   \\
        \ & +\sum_{i\neq j}\Big\{\int_{\R^N}  \big(I_\mu\ast \tilde{U}_i^{\tilde{p}}\tilde{U}_j   \big)
        \sigma^{\tilde{p}-2}|\rho|^3   \,dx\Big\}_{\tilde{p}>2} +\sum_{i\neq j}\int_{\R^N}
        \Big( I_{\mu}\ast \tilde{U}_i^{\tilde{p}}\tilde{U}_j  \Big)
        |\rho|^{\tilde{p}+1}  \,dx  \\
  \ & +\sum_{i\not= j,k\not= l }   \int_{\R^N}  \big( I_{\mu}\ast
        \tilde{U}_i^{\tilde{p}}\tilde{U}_j \big)
        \tilde{U}_k^{\tilde{p}-1}\tilde{U}_l|\rho|  \,dx  \\
              \ & + \int_{\R^N}\Big(I_{\mu}\ast\big(|u|^{\tilde{p}}u - \sum_{i=1}^\nu\alpha_i|\alpha_i|^{\tilde{p}}
    \tilde{U}_i^{\tilde{p}+1}
    \big)\Big)\Big( \sum_{i=1}^\nu\alpha_i|\alpha_i|^{\tilde{p}-1}
    \tilde{U}_i^{\tilde{p}}   \Big)\rho\,dx\\
    \ &  +  \int_{\R^N}\Big(I_{\mu}\ast\sum_{i=1}^\nu\alpha_i|\alpha_i|^{\tilde{p}}
    \tilde{U}_i^{\tilde{p}+1}\Big)\Big(|u|^{\tilde{p}-1}u - \sum_{i=1}^\nu\alpha_i|\alpha_i|^{\tilde{p}-1}
    \tilde{U}_i^{\tilde{p}}   \Big)\rho\,dx\\
    \ & +  \sum_{i\neq j}\alpha_i|\alpha_i|^{\tilde{p}}\alpha_j|\alpha_j|^{\tilde{p}-1} \int_{\R^N}\Big(I_{\mu}\ast
    \tilde{U}_i^{\tilde{p}+1}\Big)
    \tilde{U}_j^{\tilde{p}}  \rho\,dx \\
    \ \eqqcolon&\   \mathcal{H}_1+\cdots +\mathcal{H}_{19},
        \endaligned
 $$
  where we have used the orthogonality condition \eqref{eq:4.3} in $\mathcal{H}_{19}$.
  \vskip0.25in
  $\bullet$ The above terms $\mathcal{H}_1\sim\mathcal{H}_{16}$   can be controlled easily by using the method of Lemma \ref{double-integral-estimate}.
  Applying the HLS inequality \eqref{eq:1.9} with $ s=t=\frac{2N}{2N-\mu}=\frac{p+1}{\tilde{p}+1}$  (see the Remark \ref{rmk3-double-integral-estimate}), H\"older inequality and Sobolev inequality,  we get that
  $$\aligned \mathcal{H} _1&=\int_{\R^N}(I_\mu\ast\sigma^{\tilde{p}}|\rho|)\sigma^{\tilde{p}-1}|\rho|^2  \,dx
 \lesssim \|\sigma^{\tilde{p}}|\rho|\|_{L^{\frac{p+1}{\tilde{p}+1}}}
 \|\sigma^{\tilde{p}-1}|\rho|^2
 \|_{L^{\frac{p+1}{\tilde{p}+1}}}\\
 &\lesssim \|\sigma\|_{L^{p+1}}^{\tilde{p}}
 \|\rho\|_{L^{p+1}} \|\sigma\|_{L^{p+1}}^{\tilde{p}-1}
 \|\rho\|_{L^{p+1}}^2 \lesssim \|\nabla\rho\|_{L^2}^3,\ \mu<4.
 \endaligned $$
 Note that the range of $\mu$ is the pre-condition  of using H\"older inequality (see the Remark \ref{rmk2-double-integral-estimate} and the Table \ref{tab:Equivalent with-p wan-mu}).
 Similar to the estimate of $\mathcal{A}_1$, we have that
 $$\aligned\mathcal{H} _2= \Big\{\int_{\R^N}(I_\mu\ast\sigma^{\tilde{p}}|\rho|)\sigma^{\tilde{p}-2}|\rho|^3\,dx\Big\}_{\tilde{p}>2}
 &\lesssim \Big\{\|\sigma^{\tilde{p}}
 |\rho|\|_{L^{\frac{p+1}{\tilde{p}+1}}}\|\sigma^{\tilde{p}-2}|\rho|^3\|_{L^{\frac{p+1}{\tilde{p}+1}}}\Big\}_{\mu<6-N}\\
 &\lesssim  \big\{\|\sigma\|_{L^{p+1}}^{2\tilde{p}-2}
 \|\rho\|_{L^{p+1}}^4\big\}_{\mu<6-N}\lesssim \big\{\|\nabla\rho\|_{L^2}^4\big\}_{\mu<6-N}, \endaligned $$
 $$\aligned \mathcal{H}_3 =\int_{\R^N}(I_\mu\ast\sigma^{\tilde{p}}|\rho|)|\rho|^{\tilde{p}+1}
 &\lesssim   \|\sigma^{\tilde{p}}|\rho|\|_{L^{\frac{p+1}{\tilde{p}+1}}}
 \||\rho|^{\tilde{p}+1}\|_{L^{\frac{p+1}{\tilde{p}+1}}}\\
 &\lesssim   \|\sigma\|_{L^{p+1}}^{\tilde{p}}
 \|\rho\|_{L^{p+1}}^{\tilde{p}+2}\lesssim \|\nabla\rho\|_{L^2}^{\tilde{p}+2},\ \mu<N+2,\endaligned $$
 $$\aligned\mathcal{H}_5  =\Big\{\int_{\R^N}(I_\mu\ast
 \sigma^{\tilde{p}-1}|\rho|^2)\sigma^{\tilde{p}-1}|\rho|^2  \,dx  \Big\}_{\tilde{p}+1>2}
&\lesssim \Big\{ \|\sigma^{\tilde{p}-1}|\rho|^2\|_{L^{\frac{p+1}{\tilde{p}+1}}}
 \| \sigma^{\tilde{p}-1}|\rho|^2\|_{L^{\frac{p+1}{\tilde{p}+1}}}\Big\}_{\mu<4}\\
 &\lesssim \big\{\|\sigma\|_{L^{p+1}}^{2\tilde{p}-2}
\|\rho\|_{L^{p+1}}^{4}\big\}_{\mu<4}\lesssim \big\{\|\nabla\rho\|_{L^2}^{4}\big\}_{\mu<4}, \endaligned$$
 $$\aligned\mathcal{H} _6=\Big\{
 \int_{\R^N} (I_\mu\ast \sigma^{\tilde{p}-1}|\rho|^2)
 \sigma^{\tilde{p}-2}|\rho|^3\,dx \Big\}_{\tilde{p}+1>2,\tilde{p}>2}
 &\lesssim  \Big\{\|\sigma^{\tilde{p}-1}|\rho|^2\|_{L^{\frac{p+1}{\tilde{p}+1}}}
 \|\sigma^{\tilde{p}-2}|\rho|^3\|_{L^{\frac{p+1}{\tilde{p}+1}}}\Big\}_{\mu<6-N}\\
 &\lesssim
 \big\{\|\sigma\|_{L^{p+1}}^{2\tilde{p}-3}\|\rho\|_{L^{p+1}}^{5}\big\}_{\mu<6-N}\lesssim \big\{\|\nabla\rho\|_{L^2}^{5}\big\}_{\mu<6-N}, \endaligned$$
 $$\aligned\mathcal{H} _7=\Big\{\int_{\R^N}(I_\mu\ast \sigma^{\tilde{p}-1}|\rho|^2)|\rho|^{\tilde{p}+1}  \,dx\Big\}_{\tilde{p}+1>2}
 &\lesssim  \Big\{\|\sigma^{\tilde{p}-1}|\rho|^2\|_{L^{\frac{p+1}{\tilde{p}+1}}}
 \||\rho|^{\tilde{p}+1}\|_{L^{\frac{p+1}{\tilde{p}+1}}}\Big\}_{\mu<4}\\
 &\lesssim  \big\{\|\sigma\|_{L^{p+1}}^{\tilde{p}-1}
 \|\rho\|_{L^{p+1}}^{\tilde{p}+3}\big\}_{\mu<4}\lesssim \big\{\|\nabla\rho\|_{L^2}^{\tilde{p}+3}\big\}_{\mu<4},\endaligned $$
 $$\aligned\mathcal{H} _9=
 \int_{\R^N}(I_\mu\ast |\rho|^{\tilde{p}+1})\sigma^{\tilde{p}-1}|\rho|^2  \,dx
 &\lesssim  \||\rho|^{\tilde{p}+1}\|_{L^{\frac{p+1}{\tilde{p}+1}}}
 \|\sigma^{\tilde{p}-1}|\rho|^2\|_{L^{\frac{p+1}{\tilde{p}+1}}}\\
 &\lesssim  \|\sigma\|_{L^{p+1}}^{
   \tilde{p}-1}\|\rho\|_{L^{p+1}}^{\tilde{p}+3}\lesssim \|\nabla\rho\|_{L^2}^{\tilde{p}+3},\  \mu<4,\endaligned$$
 $$\aligned\mathcal{H} _{10}=\Big\{\int_{\R^N}(I_\mu\ast
 |\rho|^{\tilde{p}+1}) \sigma^{\tilde{p}-2}|\rho|^3\Big\}_{\tilde{p}>2}
 &\lesssim  \Big\{\||\rho|^{\tilde{p}+1}\|_{L^{\frac{p+1}{\tilde{p}+1}}}
 \|\sigma^{\tilde{p}-2}|\rho|^3\|_{L^{\frac{p+1}{\tilde{p}+1}}}\Big\}_{\mu<6-N}\\
 &\lesssim  \big\{\|\sigma\|_{L^{p+1}}^{\tilde{p}-2}
 \|\rho\|_{L^{p+1}}^{\tilde{p}+4}\big\}_{\mu<6-N}  \lesssim\big\{ \|\nabla\rho\|_{L^2}^{\tilde{p}+4}\big\}_{\mu<6-N},\endaligned$$
 $$\aligned\mathcal{H} _{11}&=
 \int_{\R^N}( I_{\mu}\ast |\rho|^{\tilde{p}+1}) |\rho|^{\tilde{p}+1}\,dx
 \lesssim  \||\rho|^{\tilde{p}+1}\|_{L^{\frac{p+1}{\tilde{p}+1}}}^2\lesssim \|\rho\|_{L^{p+1}}^{2(\tilde{p}+1)}
 \lesssim\|\nabla\rho\|_{L^2}^{2(\tilde{p}+1)}.
 \endaligned$$
 In order to estimate the other terms, let us note  that if $\mu<4$(iff $\tilde{p}-1>0$), then by Lemma
 \ref{prop:interaction_approx}, for
 any $i\not=j$ it holds that
 \begin{equation}\label{eq:4.8}
 \aligned
 \|\tilde{U}_{i}^{\tilde{p}-1}\tilde{U}_{j}\|_{L^{\frac{p+1}{\tilde{p}}}}&=
 \bigg(\int_{\mathbb{R}^N}\tilde{U}_i^{\frac{(\tilde{p}-1)(p+1)}{\tilde{p}}}
 \tilde{U}_j^{\frac{p+1}{\tilde{p}}}\,dx
 \bigg)^{\frac{\tilde{p}}{p+1}}
 \approx \begin{cases}
     Q^{\frac{N-2}{2}\min (\tilde{p}-1,1)}, &\text{if } \tilde{p}-1\neq 1\\
     \Big(Q^{\frac{N}{2}} \log(\frac{1}{Q})\Big)^{\frac{\tilde{p} }{ p+1}}, &\text{if } \tilde{p}-1=1
     \end{cases}\\
 &\approx \   o(1),
 \endaligned
 \end{equation}
 \begin{equation}\label{eq:4.9}
   \aligned
   \|\tilde{U}_{i}^{\tilde{p}}\tilde{U}_{j}\|_{L^{\frac{p+1}{\tilde{p}+1}}}&=\bigg(\int_{\mathbb{R}^N}\tilde{U}_i^{\frac{\tilde{p}(p+1) }{\tilde{p}+1}}
   \tilde{U}_j^{\frac{p+1}{\tilde{p}+1}}\,dx\bigg)^{\frac{\tilde{p}+1}{p+1}}
   \approx \Big(Q^{\frac{N-2}{2}
   \min (\frac{\tilde{p}(p+1) }{\tilde{p}+1},\frac{p+1}{\tilde{p}+1} )}\Big)^{\frac{\tilde{p}+1}{p+1}}\\
   &\approx \Big(Q^{\frac{N-2}{2}\frac{p+1}{\tilde{p}+1}}\Big)^{\frac{\tilde{p}+1}{p+1}}
=Q^{\frac{N-2}{2}}\approx o(1).\endaligned
   \end{equation}
Thus by \eqref{eq:4.9} and \eqref{eq:2.12}, we have that
\begin{equation}\label{eq:4.10}
\aligned\|\tilde{U}_{i}^{\tilde{p}}\tilde{U}_{j}\|_{L^{\frac{p+1}{\tilde{p}+1}}}\approx Q^{\frac{N-2}{2}}\approx \int_{\R^N} \tilde{U}_{i}^{p} \tilde{U}_{j}\,dx.\endaligned
   \end{equation}
 Therefore, using \eqref{eq:4.8}, \eqref{eq:4.9}, the HLS inequality \eqref{eq:1.9} with $ s=t=\frac{2N}{2N-\mu}=\frac{p+1}{\tilde{p}+1}$, H\"older and Sobolev inequalities, we obtain that
 $$\aligned\mathcal{H}_{4}&=
 \sum_{i\not=j}\int_{\R^N}(I_\mu\ast\sigma^{\tilde{p}}
 |\rho|)
 \tilde{U}_i^{\tilde{p}-1}\tilde{U}_j|\rho|\,dx
 \lesssim \|\sigma^{\tilde{p}}|\rho|\|_{L^{\frac{p+1}{\tilde{p}+1}}}
 \|\tilde{U}_i^{\tilde{p}-1}\tilde{U}_j|\rho|\|_{L^{\frac{p+1}{\tilde{p}+1}}}\\
 &\lesssim\|\sigma\|_{L^{p+1}}^{\tilde{p}}\|\rho\|_{L^{p+1}}
 \|\tilde{U}_{i}^{\tilde{p}-1}\tilde{U}_{j}\|_{L^{\frac{p+1}{\tilde{p}}}}\|\rho\|_{L^{p+1}}\lesssim o(1)\|\nabla\rho\|_{L^2}^{2},\  \mu<N+2\text{ and } \mu<4,\endaligned$$
 $$\aligned\mathcal{H} _{8}&=
 \sum_{i\not=j} \Big\{\int_{\R^N}(I_\mu\ast\sigma^{\tilde{p}-1}|\rho|^2)
 \tilde{U}_i^{\tilde{p}-1}\tilde{U}_j|\rho|\,dx\Big\}_{\tilde{p}+1>2}
 \lesssim\Big\{\|\sigma^{\tilde{p}-1}|\rho|^2\|_{L^{\frac{p+1}{\tilde{p}+1}}}
 \|\tilde{U}_i^{\tilde{p}-1}\tilde{U}_j|\rho|\|_{L^{\frac{p+1}{\tilde{p}+1}}}\Big\}_{\mu<4}\\
 &\lesssim\big\{\|\sigma\|_{L^{p+1}}^{\tilde{p}-1}\|\rho\|_{L^{p+1}}^2
 \|\tilde{U}_{i}^{\tilde{p}-1}\tilde{U}_{j}\|_{L^{\frac{p+1}{\tilde{p}}}}\|\rho\|_{L^{p+1}}\big\}_{\mu<4} \lesssim \big\{o(1) \|\nabla\rho\|_{L^2}^{3}\big\}_{\mu<4}, \endaligned$$
 $$\aligned\mathcal{H} _{12}&=
  \sum_{i\not=j}\int_{\R^N}\left( I_{\mu}\ast
 |\rho|^{\tilde{p}+1} \right)\tilde{U}_i^{\tilde{p}-1}\tilde{U}_j|\rho|  \,dx
 \lesssim\||\rho|^{\tilde{p}+1}\|_{L^{\frac{p+1}{\tilde{p}+1}}}
 \|\tilde{U}_i^{\tilde{p}-1}\tilde{U}_j|\rho|\|_{L^{\frac{p+1}{\tilde{p}+1}}}\\
 &\lesssim\|\rho\|_{L^{p+1}}^{\tilde{p}+2}
 \|\tilde{U}_{i}^{\tilde{p}-1}\tilde{U}_{j}\|_{L^{\frac{p+1}{\tilde{p}}}} \lesssim \|\rho\|_{L^{p+1}}^{\tilde{p}+2}
 \|\tilde{U}_{i}\|_{L^{p+1}}^{\tilde{p}-1}\|\tilde{U}_{j}\|_{L^{p+1}}
 \lesssim \|\nabla\rho\|_{L^2}^{\tilde{p}+2},\  \mu<N+2\text{ and } \mu<4,\endaligned $$
 $$\aligned\mathcal{H} _{13}&=
 \sum_{i\not=j}\int_{\R^N}
 \big(I_\mu\ast\tilde{U}_i^{\tilde{p}}\tilde{U}_j\big)\sigma^{\tilde{p}-1}|\rho|^2\,dx\lesssim
 \|\tilde{U}_i^{\tilde{p}}\tilde{U}_j\|_{L^{\frac{p+1}{\tilde{p}+1}}}
 \|\sigma^{\tilde{p}-1}|\rho|^2\|_{L^{\frac{p+1}{\tilde{p}+1}}}\\
 &\lesssim \|\tilde{U}_i^{\tilde{p}}\tilde{U}_j\|_{
 L^{\frac{p+1}{\tilde{p}+1}}}\|\sigma\|_{L^{p+1}}^{\tilde{p}-1}\|\rho\|_{L^{p+1}}^2
  \lesssim o(1)\|\nabla\rho\|_{L^2}^{2},\  \mu<4,\endaligned $$
 $$\aligned\mathcal{H} _{14}&=\sum_{i\not=j}\Big\{
 \int_{\R^N}  \big(I_\mu\ast \tilde{U}_i^{\tilde{p}}\tilde{U}_j  \big)
 \sigma^{\tilde{p}-2}|\rho|^3   \,dx\Big\}_{\tilde{p}>2}
 \lesssim \Big\{\|\tilde{U}_i^{
 \tilde{p}}\tilde{U}_j \|_{L^{\frac{p+1}{\tilde{p}+1}}}
 \|\sigma^{\tilde{p}-2}|\rho|^3\|_{L^{\frac{p+1}{\tilde{p}+1}}}\Big\}_{\mu<6-N}\\
 &\lesssim \big\{\|\tilde{U}_i^{\tilde{p}}\tilde{U}_j\|_{
 L^{\frac{p+1}{\tilde{p}+1}}} \|\sigma\|_{L^{p+1}}^{\tilde{p}-2}\|\rho\|_{L^{p+1}}^3\big\}_{\mu<6-N}
 \lesssim \big\{o(1)\|\nabla\rho\|_{L^2}^{3}\big\}_{\mu<6-N},\endaligned$$
 $$\aligned\mathcal{H} _{15}&=\sum_{i\not=j}
  \int_{\R^N}\big(I_{\mu}
  \ast\tilde{U}_i^{\tilde{p}}\tilde{U}_j\big)|\rho|^{\tilde{p}+1}\,dx
 \lesssim\|\tilde{U}_i^{
 \tilde{p}}\tilde{U}_j\|_{L^{\frac{p+1}{\tilde{p}+1}}}
 \||\rho|^{\tilde{p}+1}\|_{L^{\frac{p+1}{\tilde{p}+1}}}\\
 &\approx\|\tilde{U}_i^{\tilde{p}}\tilde{U}_j\|_{
 L^{\frac{p+1}{\tilde{p}+1}}}\|\rho\|_{L^{p+1}}^{\tilde{p}+1}
 \lesssim o(1)
 \|\nabla\rho\|_{L^2}^{\tilde{p}+1}.\endaligned$$
 Therefore, if $N\geq3$ and $\mu<4$, then all the above terms   are of  $o(1)\|\nabla \rho\|_{L^{2}}^{2}$ when  the exponents are strictly larger than 2, namely
  $$
     \mathcal{H}_{1}+\cdots +\mathcal{H}_{15}\lesssim o(1)\|\nabla \rho\|_{L^{2}}^{2}.
 $$
We remark that the above estimates are heavily dependent on the condition $\mu<4$.
 Next, according to the HLS inequality \eqref{eq:1.9} with $ s=t=\frac{2N}{2N-\mu}=\frac{p+1}{\tilde{p}+1}$, H\"older and Sobolev inequalities, \eqref{eq:4.10}, \eqref{eq:4.8}, and the estimate \eqref{eq:3.12} in Lemma \ref{prop:interaction_and_coef} (here we need $3\leq N\leq 5$ and $\frac{N+2}{2}<\mu<\min(N,4)$),  we obtain  that
 $$\aligned\mathcal{H}_{16}= &\ \sum_{i\not= j,k\not= l} \int_{\R^N}\big(I_\mu\ast\tilde{U}_i^{\tilde{p}}\tilde{U}_j\big)
 \tilde{U}_k^{\tilde{p}-1}\tilde{U}_l|\rho|\,dx\lesssim
 \|\tilde{U}_i^{\tilde{p}}\tilde{U}_j\|_{
 L^{\frac{p+1}{\tilde{p}+1}}}
 \|\tilde{U}_k^{\tilde{p}-1}\tilde{U}_l|\rho|\|_{L^{\frac{p+1}{\tilde{p}+1}}}\\
 \ \lesssim&\
 \|\tilde{U}_i^{\tilde{p}}\tilde{U}_j\|_{
 L^{\frac{p+1}{\tilde{p}+1}}} \|\tilde{U}_{k}^{\tilde{p}-1}\tilde{U}_{l}\|_{L^{\frac{p+1}{\tilde{p}}}}  \|\rho\|_{L^{p+1}}  \lesssim \Big(\int_{\R^N} \tilde{U}_{i}^{p} \tilde{U}_{j} \,dx\Big)o(1) \|\nabla\rho\|_{L^2} \\
 \ \lesssim&\ o(1) \left(\hat\varepsilon\|\nabla\rho\|_{L^2}
 +\|\nabla \rho\|_{L^{2}}^{\min(\tilde{p}, 2) }+\Theta(u)\right) \|\nabla\rho\|_{L^2}.\\
 \endaligned$$
 Thus,  if  $3\leq N\leq 5$ and $\frac{N+2}{2}<\mu<\min(N,4)$ ( it implies $\tilde{p}>1$), then
 $$
     \aligned
     \mathcal{H}_{16}\lesssim \hat\varepsilon\|\nabla\rho\|_{L^2}^{2}+ o(1)\|\nabla\rho\|_{L^2}^{2}+\Theta(u)\|\nabla\rho\|_{L^2}.
     \endaligned
  $$
 Summing up, if $3\leq N\leq 5$ and $\frac{N+2}{2}<\mu<\min(N,4)$, then
 \begin{equation}\label{eq:4.11}
     \mathcal{H}_{1}+\cdots+\mathcal{H}_{16}\lesssim \hat\varepsilon\|\nabla\rho\|_{L^2}^{2}+o(1)\|\nabla\rho\|_{L^2}^{2}+\Theta(u)\|\nabla\rho\|_{L^2}.
 \end{equation}
 \vskip0.05in
$\bullet$ Next we consider $\mathcal{H} _{19}$. By  applying  the relation \eqref{eq:2.3} and the H\"older inequality, we have that
$$
\aligned \   \mathcal{H}_{19}&=\sum_{i\neq j}\alpha_i|\alpha_i
  |^{\tilde{p}}\alpha_j|\alpha_j|^{\tilde{p}-1}\int_{\R^N}\Big(I_\mu\ast
      \tilde{U}_i^{\tilde{p}+1}\Big)\tilde{U}_j^{\tilde{p}} \rho\,dx \\
      &\approx \sum_{i\neq j}\int_{\R^N}\tilde{U}_i^{p-\tilde{p}}\tilde{U}_j^{\tilde{p}} \rho\,dx
      \lesssim \max_{i\neq j}  \|\tilde{U}_i^{p-\tilde{p}}\tilde{U}_j^{\tilde{p}}
      \|_{L^{\frac{p+1}{p}}} \| \nabla\rho\|_{L^{2}}.
\endaligned
 $$
For $i\neq j$, noting that $\mu>\frac{N+2}{2}$ is equivalent to $\frac{(p-\tilde{p})(p+1)}{p}>\frac{\tilde{p}(p+1)}{p}$, we combine  Lemma \ref{prop:interaction_approx} and Lemma \ref{prop:interaction_and_coef} (here we ask $3\leq N\leq 5$ and $\frac{N+2}{2}<\mu<\min(N,4)$) and  get that
$$
   \aligned \ &\  \|\tilde{U}_i^{p-\tilde{p}}\tilde{U}_j^{\tilde{p}}\|_{L^{\frac{p+1}{p}}} \approx \Big(Q^{\frac{N-2}{2}\min(\frac{(p-\tilde{p})(p+1)}{p},\frac{\tilde{p}(p+1)}{p} )}\Big)^{\frac{p}{p+1}} \approx  \Big(\int_{\R^N}\tilde{U}_i^p
\tilde{U}_j\Big)^{\tilde{p}}\\
\ \lesssim&\  \Big(\hat  \varepsilon\| \nabla \rho\|_{L^{2}} +\| \nabla \rho\|_{L^{2}}^{\min(\tilde{p}, 2)}
+\Theta(u)  \Big)^{\tilde{p}} \lesssim
   \hat  \varepsilon\| \nabla \rho\|_{L^{2}}
   +o(1)\| \nabla \rho\|_{L^{2}} +\Theta(u),\endaligned
  $$
where the last inequality holds if $\tilde{p}\geq 1$ and $\tilde{p}>1$, namely $\mu<4$.  Therefore, if $3\leq N\leq 5$ and $\frac{N+2}{2}<\mu<\min(N,4)$, then we have that
\begin{equation}\label{eq:4.12}
\aligned \   \mathcal{H} _{19}&\lesssim \hat{\varepsilon}\| \nabla \rho\|_{L^{2}}^{2} +o(1)\| \nabla \rho\|_{L^{2}}^{2}
    +\Theta(u)\| \nabla \rho\|_{L^{2}}.
\endaligned
\end{equation}

\vskip0.3in

$\bullet$ Next we consider the  term $\mathcal{H} _{17}$, which is much thorny. Firstly,
  \begin{equation}\label{eq:4.13}
  \aligned
 \   \mathcal{H}_{17}=&\  \int_{\R^N}\Big(I_{\mu}\ast\big(|u|^{\tilde{p}}u - \sum_{i=1}^\nu\alpha_i|\alpha_i|^{\tilde{p}}
  \tilde{U}_i^{\tilde{p}+1}
  \big)\Big)\Big( \sum_{i=1}^\nu\alpha_i|\alpha_i|^{\tilde{p}-1}
  \tilde{U}_i^{\tilde{p}}   \Big)\rho\,dx  \\
  =&\ \sum_{i=1}^\nu  \alpha_i^{\tilde{p}}\int_{\R^N}\Big(I_{\mu}\ast\big(|u|^{\tilde{p}}u - \sum_{i=1}^\nu\alpha_i|\alpha_i|^{\tilde{p}}
  \tilde{U}_i^{\tilde{p}+1}-(\tilde{p}+1)\sigma^{\tilde{p}}\rho
  \big)\Big)
  \tilde{U}_i^{\tilde{p}}   \rho\,dx \\
  \ &\ +\sum_{i=1}^\nu  \alpha_i^{\tilde{p}} (\tilde{p}+1)\int_{\R^N} \big(I_{\mu} \ast \sigma^{\tilde{p}}\rho \big) \tilde{U}_{i}^{\tilde{p}}\rho \,dx
  \eqqcolon \sum_{i=1}^\nu  \alpha_i^{\tilde{p}} \mathcal{H} _{17,1}^{(i)}+ \sum_{i=1}^\nu  \alpha_i^{\tilde{p}} (\tilde{p}+1)\mathcal{H}_{17,2}^{(i)}.
  \endaligned
  \end{equation}
On the one hand, using  \eqref{eq:4.5},
  $$\aligned\mathcal{H}_{17,1}^{(i)} \lesssim &  \  \int_{\R^N}
  \Big(I_{\mu}\ast \big( \{\sigma^{\tilde{p}-1}|\rho|^2\}_{\tilde{p}+1>2} + |\rho|^{\tilde{p}+1} +
  \sum_{k\not= l}\tilde{U}_k^{\tilde{p}}\tilde{U}_l\big) \Big)\tilde{U}_i^{\tilde{p}}|\rho| \,dx\\
  \ = &\    \Big\{\int_{\R^N}(I_{\mu}\ast \sigma^{\tilde{p}-1}|\rho|^2)
  \tilde{U}_i^{\tilde{p}}|\rho| \,dx\Big\}_{\tilde{p}+1>2} + \int_{\R^N}(I_{\mu}\ast |\rho|^{\tilde{p}+1} )\tilde{U}_i^{\tilde{p}}|\rho| \,dx \\
  & \quad  + \sum_{k\not= l}
   \int_{\R^N}\big(I_{\mu}\ast\tilde{U}_k^{\tilde{p}}\tilde{U}_l\big)
   \tilde{U}_i^{\tilde{p}}|\rho| \,dx.\endaligned$$
  According to the HLS inequality \eqref{eq:1.9} with $ s=t=\frac{2N}{2N-\mu}=\frac{p+1}{\tilde{p}+1}$,
 H\"older and Sobolev inequalities, we get
 $$
  \aligned \Big\{\int_{\R^N}(I_{\mu}\ast \sigma^{\tilde{p}-1}|\rho|^2 )\tilde{U}_i^{\tilde{p}}|\rho| \,dx\Big\}_{\tilde{p}+1>2}
  &\lesssim \Big\{\| \sigma^{\tilde{p}-1}|\rho|^2\|_{L^{\frac{p+1}{\tilde{p}+1}}}
 \|\tilde{U}_i^{\tilde{p}}\rho\|_{L^{\frac{p+1}{\tilde{p}+1}}}\Big\}_{\mu<4}\\
 &\lesssim  \big\{\|\sigma\|_{L^{p+1}} ^{\tilde{p}-1}\|\rho\|_{L^{p+1}}^{2}
 \|\tilde{U}_i\|_{L^{p+1}}^{\tilde{p}}\|\rho\|_{L^{p+1}}\big\}_{\mu<4} \lesssim \big\{\|\nabla\rho\|_{L^2}^{3}\big\}_{\mu<4},\endaligned
 $$
 $$\aligned \int_{\R^N}(I_{\mu}\ast |\rho|^{\tilde{p}+1} )\tilde{U}_i^{\tilde{p}}|\rho| \,dx
 \lesssim \||\rho|^{\tilde{p}+1}\|_{L^{\frac{p+1}{\tilde{p}+1}}}
 \|\tilde{U}_i^{\tilde{p}}\rho \|_{L^{\frac{p+1}{\tilde{p}+1}}}\lesssim  \|\rho\|_{L^{p+1}}^{\tilde{p}+1}
 \|\tilde{U}_i\|_{L^{p+1}}^{\tilde{p}}\|\rho\|_{L^{p+1}}
  \lesssim \|\nabla\rho\|_{L^2}^{\tilde{p}+2}.\endaligned$$
 Similar to  the proof of $\mathcal{H}_{16}$, for $k\neq l$, by using \eqref{eq:4.10} and Lemma \ref{prop:interaction_and_coef}, we obtain that
 $$\aligned
 \int_{\R^N}\Big(I_{\mu}\ast \tilde{U}_k^{\tilde{p}}\tilde{U}_l\Big)
 \tilde{U}_i^{\tilde{p}}|\rho|\,dx
 &\lesssim
 \|\tilde{U}_k^{
 \tilde{p}}\tilde{U}_l\|_{L^{\frac{p+1}{\tilde{p}+1}}}
 \|\tilde{U}_i\|_{L^{p+1}}^{\tilde{p}} \|\rho\|_{L^{p+1}}\\
 &\lesssim\Big(\int_{\R^N} \tilde{U}_{k}^{p} \tilde{U}_{l}\,dx\Big) \|\nabla\rho\|_{L^2} \\
  &\lesssim   \left(\hat\varepsilon\| \nabla \rho\|_{L^{2}}+\| \nabla\rho\|_{L^{2}}^{\min(\tilde{p},2)}
 +\Theta(u)\right)\|\nabla\rho\|_{L^{2}}.\endaligned$$
 Thus,  if $\tilde{p}>1(\iff \mu <4)$, then
 \begin{equation}\label{eq:4.14}
     \aligned
     \mathcal{H}_{17,1}^{(i)}\lesssim \hat\varepsilon\|\nabla\rho\|_{L^2}^{2}+ o(1)\|\nabla\rho\|_{L^2}^{2}+\Theta(u)\|\nabla\rho\|_{L^2}.
     \endaligned
 \end{equation}
On the other hand, for f{}ixed $1\leq i\leq \nu$,  recalling  $$\sigma=\sum\limits_{k=1}^\nu \alpha_k\tilde{U}_k=\alpha_i\tilde{U}_i+\sum\limits_{j\neq i}\alpha_j\tilde{U}_j,$$ by using the elementary inequality
 $$|(a+b)^{\tilde{p}}-a^{\tilde{p}}|\leq \varepsilon_{1} |a|^{\tilde{p}}+C(\varepsilon_{1})|b|^{\tilde{p}}, \;\; \hbox{for} \tilde{p}>1  \hbox{ and } \varepsilon_{1}>0 $$ and the Young inequality $$\tilde{U}_j^{\tilde{p}} |\rho|\lesssim \tilde{U}_j^{\tilde{p}+1}+|\rho|^{\tilde{p}+1},$$
the HLS inequality, H\"older and Sobolev inequalities and by the same process as  the estimate of $\mathcal{H}_{19}$, we get that,  if $3\leq N\leq 5$ and $\frac{N+2}{2}<\mu<\min(N,4)$, then
  \begin{equation}\label{eq:4.15}
  \aligned\mathcal{H}_{17,2}^{(i)}&= \int_{\R^N} \big(I_{\mu} \ast \sigma^{\tilde{p}}\rho \big) \tilde{U}_{i}^{\tilde{p}}\rho \,dx=\int_{\R^N} \Big(I_{\mu} \ast \big(\sigma^{\tilde{p}}\rho -\tilde{U}_{i}^{\tilde{p}}\rho\big)\Big) \tilde{U}_{i}^{\tilde{p}}\rho \,dx+\int_{\R^N} \big(I_{\mu} \ast \tilde{U}_{i}^{\tilde{p}}\rho \big) \tilde{U}_{i}^{\tilde{p}}\rho \,dx \\
  &\lesssim \varepsilon_{1} \int_{\R^N} \big(I_{\mu} \ast  \tilde{U}_{i}^{\tilde{p}}|\rho|\big)\tilde{U}_{i}^{\tilde{p}}|\rho| + C(\varepsilon_{1}) \sum_{j\neq i} \int_{\R^N} \big(I_{\mu} \ast  \tilde{U}_{j}^{\tilde{p}}|\rho|\big)\tilde{U}_{i}^{\tilde{p}}|\rho|+\int_{\R^N} \big(I_{\mu} \ast \tilde{U}_{i}^{\tilde{p}}\rho \big) \tilde{U}_{i}^{\tilde{p}}\rho \,dx  \\
  &\lesssim \varepsilon_{1} \int_{\R^N} \big(I_{\mu} \ast  \tilde{U}_{i}^{\tilde{p}}|\rho|\big)\tilde{U}_{i}^{\tilde{p}}|\rho| + C(\varepsilon_{1}) \sum_{j\neq i} \int_{\R^N} \big(I_{\mu} \ast  \tilde{U}_{j}^{\tilde{p}+1}\big)\tilde{U}_{i}^{\tilde{p}}|\rho|\\
  &\ \ \ \ + C(\varepsilon_{1}) \sum_{j\neq i}\int_{\R^N}\big(I_{\mu} \ast |\rho|^{\tilde{p}+1}\big)\tilde{U}_{i}^{\tilde{p}}|\rho|+\int_{\R^N} \big(I_{\mu} \ast \tilde{U}_{i}^{\tilde{p}}\rho \big) \tilde{U}_{i}^{\tilde{p}}\rho \,dx \\
  &\lesssim \varepsilon_{1}\|\nabla \rho\|_{L^{2}}^{2} +C(\varepsilon_{1})\sum_{j\neq i}\int_{\R^N} \tilde{U}_j^{p-\tilde{p}}\tilde{U}_i^{\tilde{p}}|\rho|\,dx+C(\varepsilon_{1})\|\nabla\rho\|_{L^2}^{\tilde{p}+2}  + \int_{\R^N} \big(I_{\mu} \ast \tilde{U}_{i}^{\tilde{p}}\rho \big) \tilde{U}_{i}^{\tilde{p}}\rho \,dx\\
  & \lesssim \varepsilon_{1}\|\nabla \rho\|_{L^{2}}^{2} +C(\varepsilon_{1})
  \Big(\hat\varepsilon\| \nabla \rho\|_{L^{2}}+o(1)\|\nabla\rho\|_{L^{2}}
  +\Theta(u)\Big)\|\nabla\rho\|_{L^{2}}\\
  &\ \ \ \ + C(\varepsilon_{1})\|\nabla\rho\|_{L^2}^{\tilde{p}+2}  + \int_{\R^N} \big(I_{\mu} \ast \tilde{U}_{i}^{\tilde{p}}\rho \big) \tilde{U}_{i}^{\tilde{p}}\rho \,dx \\
  & \lesssim \varepsilon_{1}\|\nabla \rho\|_{L^{2}}^{2} +\hat{\varepsilon}C(\varepsilon_{1})
 \| \nabla \rho\|_{L^{2}}^{2}+o(1)\|\nabla\rho\|_{L^{2}}^{2}
 +\Theta(u)\|\nabla\rho\|_{L^{2}}+\int_{\R^N} \big(I_{\mu} \ast \tilde{U}_{i}^{\tilde{p}}\rho \big) \tilde{U}_{i}^{\tilde{p}}\rho \,dx.
 \endaligned
\end{equation}
Next we show  that
\begin{equation}\label{eq:4.16}
\int_{\R^N} \big(I_{\mu} \ast \tilde{U}_{i}^{\tilde{p}}\rho \big) \tilde{U}_{i}^{\tilde{p}}\rho \,dx\lesssim  o(\|\nabla \rho\|^{2}_{L^{2}}) \text{\quad as \quad} \|\nabla \rho\|_{L^{2}}\to 0, \quad \forall 1\leq i\leq \nu.
\end{equation}
To this prupose, we let $\tilde{\rho}=T_{z_{i},\lambda_{i}}^{-1}\rho=T_{-\lambda_{i} z_{i},\lambda_{i}^{-1}}\rho$, then it follows from \eqref{eq:2.9} that $$\|\tilde{\rho}\|_{L^{p+1}}=\|\rho\|_{L^{p+1}}\ \ \hbox{and}\ \ \|\nabla \tilde{\rho}\|_{L^{2}}=\|\nabla \rho\|_{L^{2}}.$$
Recall $$\tilde{U}_{i}=\tilde{U}[z_{i},\lambda_{i}]=T_{z_{i},\lambda_{i}}(\tilde{U}[0,1])=T_{z_{i},\lambda_{i}} \tilde{U},$$
where we denote $\tilde{U}=\tilde{U}[0,1]$, then by the conformal invariance \eqref{eq:2.10}, we see that
 $$
\int_{\R^N} \big(I_{\mu} \ast \tilde{U}_{i}^{\tilde{p}}\rho \big) \tilde{U}_{i}^{\tilde{p}}\rho \,dx= \int_{\R^N} \big(I_{\mu} \ast (T_{z_{i},\lambda_{i}}\tilde{U})^{\tilde{p}} T_{z_{i},\lambda_{i}}\tilde{\rho} \big) (T_{z_{i},\lambda_{i}}\tilde{U})^{\tilde{p}} T_{z_{i},\lambda_{i}}\tilde{\rho} \,dx  =\int_{\R^N} \big(I_{\mu} \ast \tilde{U}^{\tilde{p}}\tilde{\rho} \big) \tilde{U}^{\tilde{p}}\tilde{\rho} \,dx,
 $$
hence the orthogonality condition \eqref{eq:4.3} becomes
 $$
    0=\int_{\R^N} \big(I_{\mu} \ast \tilde{U}_{i}^{\tilde{p}}\rho \big)\tilde{U}_{i}^{\tilde{p}+1}\,dx
    =\int_{\R^N} \big(I_{\mu} \ast (T_{z_{i},\lambda_{i}}\tilde{U})^{\tilde{p}} T_{z_{i},\lambda_{i}}
    \tilde{\rho} \big) (T_{z_{i},\lambda_{i}}\tilde{U})^{\tilde{p}+1}  \,dx
    =\int_{\R^N} \big(I_{\mu} \ast \tilde{U}^{\tilde{p}}\tilde{\rho} \big)\tilde{U}^{\tilde{p}+1} \,dx,
 $$
i.e., $\tilde{\rho} $ is  orthogonal to  $\tilde{U}$ in $\mathcal{D}^{1,2}(\R^{N})$. For the simplicity of notation, we still use notation $\rho$ instead of $\tilde{\rho}$, namely for the proof of \eqref{eq:4.16}, it is suf{}f{}ice to prove that
\begin{equation}\label{eq:4.17}
    \int_{\R^N} \big(I_{\mu} \ast \tilde{U}^{\tilde{p}}\rho \big) \tilde{U}^{\tilde{p}}\rho \,dx\lesssim o(\|\nabla \rho\|^2_{L^{2}})\text{\quad as \quad}
    \|\nabla \rho\|_{L^{2}}\to 0,
\end{equation}
under the orthogonality condition
\begin{equation}\label{eq:4.18}
    \int_{\R^N} \big(I_{\mu} \ast \tilde{U}^{\tilde{p}}\rho \big)\tilde{U}^{\tilde{p}+1} \,dx=0.
\end{equation}
In fact, we  take $R=1/\|\nabla \rho\|_{L^{2}}^{\tau_{1}}$, where $\tau_{1}>0$ will be determined later, then $R\gg 1$ as $\|\nabla \rho\|_{L^{2}}\ll 1$ and
\begin{equation}\label{eq:4.19}
\aligned
\int_{\R^N} \big(I_{\mu} \ast \tilde{U}^{\tilde{p}}\rho \big) \tilde{U}^{\tilde{p}}\rho \,dx
 &=\ \int_{\R^N} \big(I_{\mu} \ast \tilde{U}^{\tilde{p}}\rho \big) \tilde{U}^{\tilde{p}}\rho
\mathds{1}_{B(0,R)}\,dx +\int_{\R^N} \big(I_{\mu} \ast \tilde{U}^{\tilde{p}}\rho \big)
\tilde{U}^{\tilde{p}}\rho \mathds{1}_{B(0,R)^{c}}\,dx \\
 &\eqqcolon  \mathcal{I}_{1}+\mathcal{I}_{2}.\endaligned
\end{equation}
For the second term $\mathcal{I}_{2}$, we have
 $$
    |\mathcal{I}_{2}|=
    \Big|\int_{\R^N} 
     \big(I_{\mu} \ast
      \tilde{U}^{\tilde{p}}
      \rho \big)
       \tilde{U}^{\tilde{p}}
       \rho \mathds{1}_{
        B(0,R)^{c}}\,dx
        \Big|\lesssim 
        \|\tilde{U}
        \|^{\tilde{p}}_{
          L^{p+1}}\|\nabla 
          \rho\|_{L^{2}} 
          \|\tilde{U}
          \mathds{1}_{
            B(0,R)^{c}}\|^{\tilde{p}}_{L^{p+1}}\|\nabla \rho\|_{L^{2}},
            $$
where
\begin{equation}\label{eq:4.20}
     \aligned\|\tilde{U}\mathds{1}_{B(0,R)^{c}}\|^{\tilde{p}}_{L^{p+1}}
     &=\Big(\int_{B(0,R)^{c}}\tilde{U}^{p+1} \,dx   \Big)^{
        \frac{\tilde{p}}{p+1}}\\
     &\approx \bigg(\int_{|x|>R}
      \bigg(\frac{1}{1+|x|^{2}}\bigg)^{\frac{N-2}{2}\cdot \frac{2N}{N-2 }}
      \,dx\bigg)^{\frac{N-\mu+2}{2N}}\\
      &\approx \Big(\int_{R}^{\infty} \frac{1}{r^{2N}} r^{N-1}
      \,dr\Big)^{\frac{N-\mu+2}{2N}} \\
      &\approx R^{-\frac{N-\mu+2}{2}}.\endaligned
     \end{equation}
Thus,  if $\tau_{1}>0$, there holds
\begin{equation}\label{eq:4.21}
    |\mathcal{I}_{2}|\lesssim \|\nabla \rho\|_{L^{2}}^{\tau_{1}\frac{N-\mu+2}{2}} \|\nabla \rho\|_{L^{2}}^{2}\approx \|\nabla \rho\|_{L^{2}}^{\tau_{1}\frac{N-\mu+2}{2}+2}\lesssim o(\|\nabla \rho\|_{L^{2}}^{2}) \text{\quad as \quad} \|\nabla \rho\|_{L^{2}}\to 0.
\end{equation}
For the f{}irst term $\mathcal{I}_{1}$, by the orthogonality condition \eqref{eq:4.18} and $\rho=\rho^{+}-\rho^{-}$, we have
 $$
    \int_{\R^N} \big(I_{\mu} \ast \tilde{U}^{\tilde{p}}\rho^{+} \big)\tilde{U}^{\tilde{p}+1} \,dx=\int_{\R^N} \big(I_{\mu} \ast \tilde{U}^{\tilde{p}}\rho^{-} \big)\tilde{U}^{\tilde{p}+1} \,dx,
 $$
then for $\Lambda>0$, which depends on $\|\nabla \rho\|_{L^{2}}$ and will be determined later, we have
\begin{equation}\label{eq:4.22}
\aligned
\mathcal{I}_{1}&=\int_{\R^N}  \big(I_{\mu} \ast \tilde{U}^{\tilde{p}}\rho \big) \tilde{U}^{\tilde{p}}\rho \mathds{1}_{B(0,R)}\,dx=\int_{\R^N}  \big(I_{\mu} \ast \tilde{U}^{\tilde{p}}(\rho^{+}-\rho^{-}) \big) \tilde{U}^{\tilde{p}}(\rho^{+}-\rho^{-}) \mathds{1}_{B(0,R)}\,dx\\
    &\leq \int_{\R^N}  \big(I_{\mu} \ast \tilde{U}^{\tilde{p}}\rho^{+} \big) \tilde{U}^{\tilde{p}}\rho^{+} \mathds{1}_{B(0,R)}\,dx + \int_{\R^N}  \big(I_{\mu} \ast \tilde{U}^{\tilde{p}}\rho^{-} \big) \tilde{U}^{\tilde{p}}\rho^{-} \mathds{1}_{B(0,R)}\,dx   \\
    &=\int_{\R^N}  \big(I_{\mu} \ast \tilde{U}^{\tilde{p}}\rho^{+} \big) \tilde{U}^{\tilde{p}}\rho^{+} \mathds{1}_{B(0,R)}\,dx -\Lambda \int_{\R^N} \big(I_{\mu} \ast \tilde{U}^{\tilde{p}}\rho^{+} \big)\tilde{U}^{\tilde{p}+1} \,dx \\
    & \ \ \ \ + \int_{\R^N}  \big(I_{\mu} \ast \tilde{U}^{\tilde{p}}\rho^{-} \big) \tilde{U}^{\tilde{p}}\rho^{-} \mathds{1}_{B(0,R)}\,dx +\Lambda \int_{\R^N} \big(I_{\mu} \ast \tilde{U}^{\tilde{p}}\rho^{-} \big)\tilde{U}^{\tilde{p}+1} \,dx\\
    &=\int_{\R^N}  \big(I_{\mu} \ast \tilde{U}^{\tilde{p}}\rho^{+} \big) \tilde{U}^{\tilde{p}}(\rho^{+}-\Lambda \tilde{U}) \mathds{1}_{B(0,R)}\,dx -\Lambda \int_{\R^N} \big(I_{\mu} \ast \tilde{U}^{\tilde{p}}\rho^{+} \big)\tilde{U}^{\tilde{p}+1} \mathds{1}_{B(0,R)^{c}}\,dx \\
    &\ \ \ \ + \int_{\R^N}  \big(I_{\mu} \ast \tilde{U}^{\tilde{p}}\rho^{-} \big) \tilde{U}^{\tilde{p}}(\rho^{-}+\Lambda \tilde{U}) \mathds{1}_{B(0,R)}\,dx +\Lambda \int_{\R^N} \big(I_{\mu} \ast \tilde{U}^{\tilde{p}}\rho^{-} \big)\tilde{U}^{\tilde{p}+1} \mathds{1}_{B(0,R)^{c}}\,dx\\
    &\leq\int_{\R^N}  \big(I_{\mu} \ast \tilde{U}^{\tilde{p}}\rho^{+} \big) \tilde{U}^{\tilde{p}}(\rho^{+}-\Lambda \tilde{U}) \mathds{1}_{B(0,R)}\,dx  \\
    &\ \ \ \ + \int_{\R^N}  \big(I_{\mu} \ast \tilde{U}^{\tilde{p}}\rho^{-} \big) \tilde{U}^{\tilde{p}}(\rho^{-}+\Lambda \tilde{U}) \mathds{1}_{B(0,R)}\,dx +\Lambda \int_{\R^N} \big(I_{\mu} \ast \tilde{U}^{\tilde{p}}\rho^{-} \big)\tilde{U}^{\tilde{p}+1} \mathds{1}_{B(0,R)^{c}}\,dx\\
    &\eqqcolon \mathcal{I}_{1,1}+\mathcal{I}_{1,2}+\mathcal{I}_{1,3}.
\endaligned
\end{equation}
\vskip0.06in
 \noindent$\bullet$ For the third term $\mathcal{I}_{1,3}$, we have
\begin{equation}\label{eq:4.23}
    \mathcal{I}_{1,3}\lesssim \Lambda \|\tilde{U}\|_{L^{p+1}}^{\tilde{p}}\|\nabla\rho\|_{L^{2}}\|\tilde{U}\mathds{1}_{B(0,R)^{c}} \|_{L^{p+1}}^{\tilde{p}+1}\lesssim \Lambda\|\nabla\rho\|_{L^{2}} R^{-(N-\frac{\mu}{2})}\approx \Lambda\|\nabla\rho\|_{L^{2}}^{1+\tau_{1}(N-\frac{\mu}{2})},
\end{equation}
where we used the calculations in \eqref{eq:4.20}.
\vskip0.06in
      \noindent$\bullet$ For the f{}irst term $\mathcal{I}_{1,1}$, since for $x\in B(0,R)$,
$$\tilde{U}(x)\gtrsim 1/(1+R^{2})^{\frac{N-2}{2}} \approx R^{-(N-2)} \eqqcolon \Psi,$$ then for any $\tau_{2}>0$, which will be determined later, we have $$\tilde{U}^{\frac{\tau_{2}}{1+\tau_{2}}}\gtrsim \Psi^{\frac{\tau_{2}}{1+\tau_{2}}},$$ so $$\tilde{U}\gtrsim \Psi^{\frac{\tau_{2}}{1+\tau_{2}}}\tilde{U}^{\frac{1}{1+\tau_{2}}},$$ then
\begin{equation}\label{eq:4.24}
 \aligned   \mathcal{I}_{1,1}&=\int_{\R^N}  \big(I_{\mu} \ast \tilde{U}^{\tilde{p}}\rho^{+} \big)
    \tilde{U}^{\tilde{p}}(\rho^{+}-\Lambda \tilde{U}) \mathds{1}_{B(0,R)}\,dx \\
    &\leq \int_{\R^N}  \big(I_{\mu} \ast \tilde{U}^{\tilde{p}}\rho^{+} \big)
    \tilde{U}^{\tilde{p}}(\rho^{+}-\Lambda\Psi^{\frac{\tau_{2}}{1+\tau_{2}}}
    \tilde{U}^{\frac{1}{1+\tau_{2}}} ) \mathds{1}_{B(0,R)}\,dx. \endaligned
\end{equation}
Let $\varepsilon_2\in(0,1)$ and def{}ine the set $M_{1}\coloneqq\{\rho^{+}-\Lambda\Psi^{\frac{\tau_{2}}{1+\tau_{2}}}\tilde{U}^{\frac{1}{1+\tau_{2}}}\leq \varepsilon_{2}\rho^{+}\}$, then $M_{1}\neq \emptyset $ and $$M_{1}^{c}=\{\rho^{+}-\Lambda\Psi^{\frac{\tau_{2}}{1+\tau_{2}}}\tilde{U}^{\frac{1}{1+\tau_{2}}}> \varepsilon_{2}\rho^{+}\}=\{\tilde{U}<\Big(\frac{1-\varepsilon_{2}}{\Lambda}\Big)^{1+\tau_{2}}\Psi^{-\tau_{2}}(\rho^{+})^{1+\tau_{2}}\}.$$
Further, by \eqref{eq:4.24}, we have
\begin{equation}\label{eq:4.25}
\aligned
    \mathcal{I}_{1,1}&\leq \int_{\R^N}  \big(I_{\mu} \ast \tilde{U}^{\tilde{p}}\rho^{+} \big) \tilde{U}^{\tilde{p}}(\rho^{+}-\Lambda\Psi^{\frac{\tau_{2}}{1+\tau_{2}}}\tilde{U}^{\frac{1}{1+\tau_{2}}} ) \mathds{1}_{B(0,R)\cap M_{1}}\,dx\\
    &\ \ \ \ +\int_{\R^N}  \big(I_{\mu} \ast \tilde{U}^{\tilde{p}}\rho^{+} \big) \tilde{U}^{\tilde{p}}(\rho^{+}-\Lambda\Psi^{\frac{\tau_{2}}{1+\tau_{2}}}\tilde{U}^{\frac{1}{1+\tau_{2}}} ) \mathds{1}_{B(0,R) \cap M_{1}^{c}}\,dx  \\
    &\leq \varepsilon_{2}\int_{\R^N}  \big(I_{\mu} \ast \tilde{U}^{\tilde{p}}\rho^{+} \big) \tilde{U}^{\tilde{p}}\rho^{+} \mathds{1}_{B(0,R)\cap M_{1}}\,dx\\
    &\ \ \ \ +\int_{\R^N}  \big(I_{\mu} \ast \tilde{U}^{\tilde{p}}\rho^{+} \big)    \Big(\frac{1-\varepsilon_{2}}{\Lambda}\Big)^{\tilde{p}(1+\tau_{2})}\Psi^{-\tilde{p}\tau_{2}}(\rho^{+})^{\tilde{p}(1+\tau_{2})} \rho^{+} \mathds{1}_{B(0,R) \cap M_{1}^{c}}\,dx\\
    &\leq \varepsilon_{2}\int_{\R^N}  \big(I_{\mu} \ast \tilde{U}^{\tilde{p}}|\rho| \big)
    \tilde{U}^{\tilde{p}}|\rho|\,dx+\Big(\frac{1-\varepsilon_{2}}{\Lambda}\Big)^{\tilde{p}(1+\tau_{2})}\Psi^{-\tilde{p}\tau_{2}}\int_{\R^N}  \big(I_{\mu} \ast \tilde{U}^{\tilde{p}}|\rho| \big)    |\rho|^{\tilde{p}(1+\tau_{2})+1} \,dx\\
    &\eqqcolon \mathcal{I}_{1,1,1}+\mathcal{I}_{1,1,2}.
    \endaligned
\end{equation}
Obviously,
\begin{equation}\label{eq:4.26}
    \mathcal{I}_{1,1,1} \lesssim \varepsilon_{2} \|\nabla \rho\|_{L^{2}}^{2}.
\end{equation}
Next we are going to bound $\mathcal{I}_{1,1,2}$ with some powers of $\|\nabla \rho\|_{L^{2}}$. We remark that the exponent of the term   outside the convolution is $\tilde{p}(1+\tau_{2})+1\neq \tilde{p}+1$,  {hence the same method of using HLS inequality to control the terms $\mathcal{H}_{1}\sim \mathcal{H}_{16}$ directly is not feasible here} (see Lemma \ref{double-integral-estimate} and  Remark \ref{rmk3-double-integral-estimate}). In order to control $\mathcal{I}_{1,1,2}$, we need to modify the exponent of the  term inside the convolution. In fact, since $\tilde{U}\lesssim 1$ over $\mathbb{R}^{N}$, so for any $\zeta>0$ and $\tilde{U}\lesssim \tilde{U}^{1-\zeta}$,
by combining  the HLS inequality \eqref{eq:1.9} with $1/s+1/t=(2N-\mu)/N=2(\tilde{p}+1)/(p+1)$, we have that
\begin{equation}\label{eq:4.27}
\aligned
&\int_{\R^N}  \big(I_{\mu} \ast \tilde{U}^{\tilde{p}}|\rho| \big)    |\rho|^{\tilde{p}(1+\tau_{2})+1} \,dx \lesssim  \int_{\R^N}  \big(I_{\mu} \ast \tilde{U}^{\tilde{p}(1-\zeta)}|\rho| \big)    |\rho|^{\tilde{p}(1+\tau_{2})+1} \,dx\\
&\lesssim  \|\tilde{U}^{\tilde{p}(1-\zeta)}\rho\|_{L^{s}}\||\rho|^{\tilde{p}(1+\tau_{2})+1}\|_{L^{t}} =\|\tilde{U}^{\tilde{p}(1-\zeta)}\rho\|_{L^{s}} \|\rho\|^{\tilde{p}(1+\tau_{2})+1}_{L^{(\tilde{p}(1+\tau_{2})+1)t}}.
    \endaligned
\end{equation}
Let $(\tilde{p}(1+\tau_{2})+1)t=p+1$, then $s=\frac{p+1}{\tilde{p}(1-\tau_{2})+1}$.  By the H\"older inequality \big($((p+1)/s)^{\prime}=(\tilde{p}(1-\tau_{2})+1)^{\prime}=(\tilde{p}(1-\tau_{2})+1)/\tilde{p}(1-\tau_{2}) $ \big) and the Sobolev inequality, we obtain that
\begin{equation}\label{eq:4.28}
    \aligned
    \|\tilde{U}^{\tilde{p}(1-\zeta)}\rho\|_{L^{s}}&=  \Big(\int_{\R^N} \tilde{U}^{\tilde{p}(1-\zeta)s}|\rho|^{s}  \,dx\Big)^{\frac{1}{s}}\\
    &\lesssim \bigg( \Big(\int_{\R^N}\tilde{U}^{\tilde{p}(1-\zeta)s(\frac{p+1}{s})^{\prime}} \,dx\Big)^{1-\frac{s}{p+1}} \Big(\int_{\R^N}|\rho|^{s\frac{p+1}{s}} \,dx\Big)^{\frac{s}{p+1}} \bigg)^{\frac{1}{s}}\\
    &\lesssim \Big(\int_{\R^N}\tilde{U}^{(1-\zeta) \frac{p+1}{1-\tau_{2}}} \,dx\Big)^{\frac{\tilde{p}(1-\tau_{2})}{p+1}} \|\nabla \rho\|_{L^{2}}.
    \endaligned
\end{equation}
Then let  $\zeta=\tau_{2}$, it follows from \eqref{eq:4.27}, \eqref{eq:4.28} and the Sobolev inequality that
\begin{equation}\label{eq:4.29}
    \int_{\R^N}  \big(I_{\mu} \ast \tilde{U}^{\tilde{p}}|\rho| \big)    |\rho|^{\tilde{p}(1+\tau_{2})+1} \,dx \lesssim
    \|\nabla \rho\|_{L^{2}} \|\rho\|^{\tilde{p}(1+\tau_{2})+1}_{L^{p+1}}\lesssim \|\nabla \rho\|_{L^{2}}^{\tilde{p}(1+\tau_{2})+2}.
\end{equation}
In addition, if we take $\Lambda\coloneqq\|\nabla \rho\|_{L^{2}}^{1-\tau_{3}}$, where $\tau_{3}\in(0,1/2)$ will be determined later, and recall $\Psi =R^{-(N-2)}=\|\nabla \rho\|_{L^{2}}^{\tau_{1}(N-2)}$,  then by \eqref{eq:4.29} we have  that
\begin{equation}\label{eq:4.30}
\aligned
   \mathcal{I}_{1,1,2}&= \Big(\frac{1-\varepsilon_{2}}{\Lambda}\Big)^{\tilde{p}(1+\tau_{2})}\Psi^{-\tilde{p}\tau_{2}}\int_{\R^N}  \big(I_{\mu} \ast \tilde{U}^{\tilde{p}}|\rho| \big)    |\rho|^{\tilde{p}(1+\tau_{2})+1} \,dx\\
   &\lesssim\bigg(\frac{1-\varepsilon_{2}}{ \|\nabla \rho\|_{L^{2}}^{1-\tau_{3}} }\bigg)^{\tilde{p}(1+\tau_{2})} \|\nabla \rho\|_{L^{2}}^{-\tilde{p}\tau_{2}\tau_{1}(N-2)}\|\nabla \rho\|_{L^{2}}^{\tilde{p}(1+\tau_{2})+2}\\
   &=(1-\varepsilon_{2})^{\tilde{p}(1+\tau_{2})}\|\nabla \rho\|_{L^{2}}^{\tilde{p}(1+\tau_{2})+2-\tilde{p}(1+\tau_{2})(1-\tau_{3})-\tilde{p}\tau_{2}\tau_{1}(N-2) }\\
   &\lesssim\|\nabla \rho\|_{L^{2}}^{2+\tilde{p}[(1+\tau_{2})\tau_{3}-\tau_{2}\tau_{1}(N-2)]}.
\endaligned
\end{equation}
Then
\begin{equation}\label{eq:4.31}
\aligned
   \mathcal{I}_{1,1,2}\lesssim o(\|\nabla \rho\|_{L^{2}}^{2})
   \text{\quad as \quad} \|\nabla \rho\|_{L^{2}} \to 0&\iff 2+\tilde{p}\big[(1+\tau_{2})\tau_{3}-\tau_{2}\tau_{1}(N-2)\big]>2 \\
   &\iff \tau_{3}>\tau_{1} (N-2)\frac{\tau_{2}}{1+\tau_{2}}.
\endaligned
\end{equation}
By taking  $\varepsilon_{2}\coloneqq\|\nabla \rho\|_{L^{2}}\ll 1$, it follows from \eqref{eq:4.25},
\eqref{eq:4.26} and \eqref{eq:4.30} that
\begin{equation}\label{eq:4.32}
    \mathcal{I}_{1,1}\lesssim  \varepsilon_{2} \|\nabla \rho\|_{L^{2}}^{2} + \|\nabla \rho\|_{L^{2}}^{2+\tilde{p}[(1+\tau_{2})\tau_{3}-\tau_{2}\tau_{1}(N-2)]}\lesssim o(\|\nabla \rho\|_{L^{2}}^{2}) \\
\end{equation}
as long as  $\tau_{1}, \tau_{2}, \tau_{3} $ satisfy   $\eqref{eq:4.31}$. By the choice of $\Lambda$, going back to \eqref{eq:4.23}, we have
\begin{equation}
    \mathcal{I}_{1,3}\lesssim \|\nabla \rho\|_{L^{2}}^{2+\tau_{1}(N-\frac{\mu}{2})
    -\tau_{3}},
\end{equation}
then
\begin{equation}\label{eq:4.33}
\aligned
    \mathcal{I}_{1,3}\lesssim o(\|\nabla \rho\|_{L^{2}}^{2})
    \text{\quad as \quad}\|\nabla \rho\|_{L^{2}}\to 0& \hbox{ whenever  } 2+\tau_{1}(N-\frac{\mu}{2})-\tau_{3}>2.
\endaligned
\end{equation}
\vskip0.16in
      \noindent$\bullet$ Next we deal with the second term $\mathcal{I}_{1,2}$.  Since $\tilde{U}\lesssim 1$, thus for any $\tau_{4}>0$, $\tilde{U}\lesssim \tilde{U}^{\frac{1}{1+\tau_{4}}}$, we may def{}ine the set $M_{2}\coloneqq\{0\leq\rho^{-}< \Lambda\tilde{U}^{\frac{1}{1+\tau_{4}}}\}$ and  then
\begin{equation}\label{eq:4.34}
    \aligned
M_{2}^{c}=\{\rho^{-}\geq \Lambda\tilde{U}^{\frac{1}{1+\tau_{4}}}\},
    \endaligned
\end{equation}
it follows that
\begin{equation}\label{eq:4.35}
\aligned
    \mathcal{I}_{1,2}
    &\lesssim \int_{\R^N}  \big(I_{\mu} \ast \tilde{U}^{\tilde{p}}\rho^{-} \big) \tilde{U}^{\tilde{p}}(\rho^{-}+\Lambda \tilde{U}) \mathds{1}_{B(0,R)\cap M_{2}}\,dx \\
    &\ \ +\int_{\R^N}  \big(I_{\mu} \ast \tilde{U}^{\tilde{p}}\rho^{-} \big) \tilde{U}^{\tilde{p}}(\rho^{-}+\Lambda \tilde{U}^{\frac{1}{1+\tau_{4}}}) \mathds{1}_{B(0,R)\cap M_{2}^{c}}\,dx\\
    &\approx \int_{\R^N}  \big(I_{\mu} \ast \tilde{U}^{\tilde{p}}\rho^{-} \big) \tilde{U}^{\tilde{p}}\rho^{-} \mathds{1}_{B(0,R)\cap M_{2}}\,dx+\Lambda\int_{\R^N}  \big(I_{\mu} \ast \tilde{U}^{\tilde{p}}\rho^{-} \big) \tilde{U}^{\tilde{p}+1} \mathds{1}_{B(0,R)\cap M_{2}}\,dx\\
    &\ \ +\int_{\R^N}  \big(I_{\mu} \ast \tilde{U}^{\tilde{p}}\rho^{-} \big) \tilde{U}^{\tilde{p}}(\rho^{-}+\Lambda \tilde{U}^{\frac{1}{1+\tau_{4}}}) \mathds{1}_{B(0,R)\cap M_{2}^{c}}\,dx\\
    &\eqqcolon \mathcal{I}_{1,2,1}+\mathcal{I}_{1,2,2}+\mathcal{I}_{1,2,3}.
\endaligned
\end{equation}
Further,
\begin{equation}\label{eq:4.36}
\aligned
\mathcal{I}_{1,2,1}&=\int_{\R^N}  \big(I_{\mu} \ast \tilde{U}^{\tilde{p}}\rho^{-} \big) \tilde{U}^{\tilde{p}}\rho^{-} \mathds{1}_{B(0,R)\cap M_{2}}\,dx=\int_{\R^N}  \big(I_{\mu} \ast \tilde{U}^{\tilde{p}}\rho^{-} \big) \tilde{U}^{\tilde{p}}\rho^{-} \mathds{1}_{B(0,R)\cap M_{2}\cap\{\rho^{-}\neq 0\}}\,dx\\
&\lesssim  \int_{\R^N}  \big(I_{\mu} \ast \tilde{U}^{\tilde{p}}|\rho| \big) \tilde{U}^{\tilde{p}}|\rho| \mathds{1}_{B(0,R)\cap M_{2}\cap\{\rho^{-}\neq 0\}}\,dx \\
&\lesssim \|\tilde{U}\|_{L^{p+1}}^{\tilde{p}}\|\nabla \rho\|_{L^{2}}\|\tilde{U}
\mathds{1}_{B(0,R)\cap M_{2}\cap\{\rho^{-}\neq 0\}}\|_{L^{p+1}}^{\tilde{p}}\|\nabla \rho\|_{L^{2}}.
\endaligned
\end{equation}
Since $\tau_{3}<1$ implies that $\Lambda=\|\nabla\rho\|_{L^{2}}^{1-\tau_{3}}\to 0$ as $\|\nabla\rho\|_{L^{2}}\to 0$, then
$$M_{2}\cap\{\rho^{-}\neq0\}=\Big\{0<\rho^{-}< \Lambda
\tilde{U}^{\frac{1}{1+\tau_{4}}}\Big\} \to \emptyset
\text{\quad as \quad}\|\nabla\rho\|_{L^{2}}\to 0.$$ By the absolute continuity of the integral, we get that
\begin{equation}\label{eq:4.37}
\aligned
\|\tilde{U}\mathds{1}_{B(0,R)\cap M_{2}\cap\{\rho^{-}\neq 0\}}\|_{L^{p+1}}=o(1)\text{\quad as \quad} \|\nabla\rho\|_{L^{2}}\to 0.
\endaligned
\end{equation}
It follows from \eqref{eq:4.36} and \eqref{eq:4.37}  that
\begin{equation}\label{eq:4.38}
\aligned
\mathcal{I}_{1,2,1}&\lesssim o(1)\|\nabla\rho\|_{L^{2}}^{2}\approx o(\|\nabla\rho\|_{L^{2}}^{2}).
\endaligned
\end{equation}
Similarly, by the co-area formula $$|M_{2}|=\int_{M_2} \mathds{1}_{M_{2}}\,dx=\int_{-\infty}^{+\infty}\bigg(\int_{\{\rho^{-}=\sigma\}} \frac{1}{|\nabla \rho^{-}|}\,dx \bigg)\,d\sigma=\int_{0}^{\Lambda \tilde{U}^{\frac{1}{1+\tau_{4}}}}\bigg(\int_{\{\rho^{-}=\sigma\}} \frac{1}{|\nabla \rho^{-}|}\,dx \bigg)\,d\sigma=o(\Lambda),$$
then it follows from $\tau_{3}<1/2$ that
\begin{equation}\label{eq:4.39}
\aligned
\mathcal{I}_{1,2,2}&=\Lambda\int_{\R^N}  \big(I_{\mu} \ast \tilde{U}^{\tilde{p}}\rho^{-} \big) \tilde{U}^{\tilde{p}+1} \mathds{1}_{B(0,R)\cap M_{2}}\,dx\\
&\lesssim\Lambda\|\tilde{U}\|_{L^{p+1}}^{\tilde{p}}\|\nabla \rho\|_{L^{2}}\|\tilde{U}
\mathds{1}_{B(0,R)\cap M_{2}}\|_{L^{p+1}}^{\tilde{p}+1}\\
&\lesssim \Lambda \|\nabla\rho\|_{L^{2}}o(\Lambda)\approx o(\|\nabla\rho\|_{L^{2}}^{3-2\tau_{3}})\lesssim o(\|\nabla\rho\|_{L^{2}}^{2}).
\endaligned
\end{equation}
F{}inally, by $\eqref{eq:4.34}$ and the similar process as  \eqref{eq:4.27}, \eqref{eq:4.28} and
\eqref{eq:4.29}, we have that
\begin{equation}\label{eq:4.40}
\aligned
\mathcal{I}_{1,2,3}&=\int_{\R^N}  \big(I_{\mu} \ast \tilde{U}^{\tilde{p}}\rho^{-} \big) \tilde{U}^{\tilde{p}}(\rho^{-}+\Lambda \tilde{U}^{\frac{1}{1+\tau_{4}}}) \mathds{1}_{B(0,R)\cap M_{2}^{c}}\,dx\\
&\lesssim \int_{\R^N}  \big(I_{\mu} \ast \tilde{U}^{\tilde{p}}\rho^{-} \big) \Big(\frac{1}{\Lambda}\Big)^{\tilde{p}(1+\tau_{4})}  (\rho^{-}) ^{\tilde{p}(1+\tau_{4})}2\rho^{-} \mathds{1}_{B(0,R)\cap M_{2}^{c}}\,dx\\
&\lesssim  \Big(\frac{1}{\Lambda}\Big)^{\tilde{p}(1+\tau_{4})}\int_{\R^N}  \big(I_{\mu} \ast \tilde{U}^{\tilde{p}(1-\tau_{4})}|\rho| \big)   |\rho| ^{\tilde{p}(1+\tau_{4})+1} \,dx\\
&\lesssim \bigg(\frac{1}{ \|\nabla\rho\|_{L^{2}}^{1-\tau_{3}}}\bigg)^{\tilde{p}(1+\tau_{4})} \|\nabla\rho\|_{L^{2}}^{\tilde{p}(1+\tau_{4})+2}\approx \|\nabla\rho\|_{L^{2}}^{\tilde{p}(1+\tau_{4})\tau_{3}+2}\lesssim o(\|\nabla\rho\|_{L^{2}}^{2}),
\endaligned
\end{equation}
whenever $\tau_{4},\tau_{3}>0$. Combining \eqref{eq:4.35}, \eqref{eq:4.38}, \eqref{eq:4.39} and \eqref{eq:4.40}, we have that
\begin{equation}\label{eq:4.41}
    \aligned
    \mathcal{I}_{1,2}\lesssim\mathcal{I}_{1,2,1}+\mathcal{I}_{1,2,2}+\mathcal{I}_{1,2,3}\lesssim o(\|\nabla\rho\|_{L^{2}}^{2}), \quad  \tau_{3}\in(0,1/2), \tau_{4}>0.
    \endaligned
\end{equation}
F{}inally we show that by choosing appropriate parameters $\tau_{1}\sim\tau_{4}$, all the above estimates related to $ \mathcal{I}_{1,1}\sim\mathcal{I}_{1,3}$ will hold.  In fact,
f{}ix $0<\tau_{1}<<1$, such that $\tau_{1}(N-\frac{\mu}{2})<1/2 $, then for $\tau_{2}\ll 1\implies \tau_{1}(N-\frac{\mu}{2})>\tau_{1}(N-2)\frac{\tau_{2}}{1+\tau_{2}}$,   the set
$$\mathcal{O}\coloneqq\Big\{(\tau_{1},\tau_{2},\tau_{3}):\tau_{1}(N-\frac{\mu}{2})>\tau_{3}>\tau_{1}(N-2)\frac{\tau_{2}}{1+\tau_{2}}\Big\}$$
is nonempty. We f{}ix such a $\tau_{2}$ and take $\tau_{3}\in\mathcal{O}$, then $\tau_{3}\in(0,1/2)$. Finally  for  any
$\tau_{4}>0$,  it follows from \eqref{eq:4.22},
\eqref{eq:4.32}, \eqref{eq:4.33} and \eqref{eq:4.41} that
\begin{equation}\label{eq:4.42}
    \aligned
    \mathcal{I}_{1}\lesssim \mathcal{I}_{1,1}+\mathcal{I}_{1,2}+\mathcal{I}_{1,3}\lesssim o(\|\nabla\rho\|_{L^{2}}^{2}).
    \endaligned
\end{equation}
Therefore, by \eqref{eq:4.19}, \eqref{eq:4.42} and
\eqref{eq:4.21}, we have that
 $$
    \int_{\R^N} \big(I_{\mu} \ast \tilde{U}^{\tilde{p}}\rho \big) \tilde{U}^{\tilde{p}}\rho \,dx\lesssim \mathcal{I}_{1}+|\mathcal{I}_{2}|\lesssim o(\|\nabla\rho\|_{L^{2}}^{2}),
 $$
this  is \eqref{eq:4.17}-\eqref{eq:4.16}.

\vskip0.12in

Therefore,  by using \eqref{eq:4.13}, \eqref{eq:4.14}, \eqref{eq:4.15} and  \eqref{eq:4.16}, if $3\leq N\leq 5$ and $\frac{N+2}{2}<\mu<\min(N,4)$, then
\begin{equation}\label{eq:4.43}
    \mathcal{H}_{17}\lesssim \big(\varepsilon_{1}+\hat{\varepsilon}(1+C(\varepsilon_{1}))
+o(1)\big)\|\nabla\rho\|_{L^{2}}^{2}
 +\Theta(u)\|\nabla\rho\|_{L^{2}} .
\end{equation}
\vskip0.05in
  $\bullet$ Next we consider the term $\mathcal{H} _{18}$:
  \begin{equation}\label{eq:4.44}
  \aligned\  \mathcal{H} _{18}=&\ \int_{\R^N}\Big(I_{\mu}\ast\sum_{i=1}^\nu\alpha_i|\alpha_i|^{\tilde{p}}
  \tilde{U}_i^{\tilde{p}+1}\Big)\Big(|u|^{\tilde{p}-1}u - \sum_{i=1}^\nu\alpha_i|\alpha_i|^{\tilde{p}-1}
  \tilde{U}_i^{\tilde{p}}   \Big)\rho\,dx\\
  =&\ \sum_{i=1}^\nu \alpha_i^{\tilde{p}+1} \int_{\R^N}
    \Big(I_{\mu}\ast\tilde{U}_i^{\tilde{p}+1}\Big)
    \Big(|u|^{\tilde{p}-1}u - \sum_{i=1}^\nu\alpha_i|\alpha_i|^{\tilde{p}-1}
    \tilde{U}_i^{\tilde{p}}-\tilde{p}\sigma^{\tilde{p}-1}\rho\Big)\rho\,dx\\
    & \ \ +\sum_{i=1}^\nu \alpha_i^{\tilde{p}+1}\tilde{p} \int_{\R^N}
    \Big(I_{\mu}\ast\tilde{U}_i^{\tilde{p}+1}\Big)
  \sigma^{\tilde{p}-1}\rho^2\,dx \\
  =& \ \sum_{i=1}^\nu \alpha_i^{\tilde{p}+1} \mathcal{H}_{18,1}^{(i)} +\sum_{i=1}^\nu \alpha_i^{\tilde{p}+1}\tilde{p}\,\mathcal{H}_{18,2}^{(i)}.\\
  \endaligned
  \end{equation}
  On the one hand, using relation \eqref{eq:2.3} and the inequality \eqref{eq:4.6}, we obtain
  \begin{equation}\label{eq:4.45}
  \aligned
     \mathcal{H}_{18,1}^{(i)} \ \approx&\ \int_{\R^N}
    \tilde{U}_i^{p-\tilde{p}}
    \Big(|u|^{\tilde{p}-1}u - \sum_{i=1}^\nu\alpha_i|\alpha_i|^{\tilde{p}-1}
    \tilde{U}_i^{\tilde{p}}-\tilde{p}\sigma^{\tilde{p}-1}\rho\Big)\rho\,dx\\
  \lesssim &\   \int_{\R^N}  \tilde{U}_i^{p-\tilde{p}}
    \Big(\{\sigma^{\tilde{p}-2}|\rho|^2\}_{\tilde{p}>2} + |\rho|^{\tilde{p}} +
    \sum_{ k\not= l}
   \tilde{U}_k^{\tilde{p}-1}\tilde{U}_l \Big)
   |\rho| \,dx,
   \endaligned
  \end{equation}
where
\begin{equation}\label{eq:4.46}
\aligned
\Big\{\int_{\R^N}\tilde{U}_i^{p-\tilde{p}}\sigma^{\tilde{p}-2}|\rho|^3 \,dx\Big\}_{\tilde{p}>2}
&\leq   \Big\{\|\tilde{U}_i
\|_{L^{p+1}}^{p-\tilde{p}}
 \|\sigma\|_{L^{p+1}}^{\tilde{p}-2}
 \|\rho\|_{L^{p+1}}^3\Big\}_{\mu<6-N}\lesssim \big\{\|\nabla\rho\|_{L^{p+1}}^3\big\}_{\mu<6-N},\\
\int_{\R^N}  \tilde{U}_i^{p-\tilde{p}}|\rho|^{\tilde{p}+1}\,dx&\leq \|\tilde{U}_i
\|_{L^{p+1}}^{p-\tilde{p}}
 \|\rho\|_{L^{p+1}}^{\tilde{p}+1}\lesssim \|\nabla\rho\|_{L^{p+1}}^{\tilde{p}+1}, \  \mu<N+2.
\endaligned
\end{equation}
For $k\neq l$, using Lemma \ref{prop:interaction_approx}, the H\"older and Sobolev inequalities and Lemma \ref{prop:interaction_and_coef}, we get  that

\noindent $\bullet$ if $i=k$, then $$\aligned \int_{\R^N}  \tilde{U}_i^{p-\tilde{p}}\tilde{U}_k^{\tilde{p}-1}\tilde{U}_l|\rho| \,dx
 &= \int_{\R^N}  \tilde{U}_k^{p-1}\tilde{U}_l|\rho| \,dx\leq  \Big(\int_{\R^N}\tilde{U}_k^{\frac{
(p-1)(p+1)}{p}}
\tilde{U}_l^{\frac{p+1}{p}}\,dx \Big)^{\frac{p}{p+1}}
\|\rho\|_{L^{p+1}}\\
&\approx Q^{\frac{N-2}{2}}\| \nabla \rho\|_{L^{2}}\approx \Big(\int_{\R^N}\tilde{U}_k^p
\tilde{U}_l\,dx\Big)\|\nabla \rho\|_{L^{2}} \\
&\lesssim  \Big(\hat  \varepsilon\| \nabla \rho\|_{L^{2}} +\| \nabla \rho\|_{L^{2}}^2
+\Theta(u) \Big)\| \nabla \rho\|_{L^{2}};\endaligned$$

\noindent $\bullet$ if $i=l$, then
 $$\aligned \int_{\R^N}  \tilde{U}_i^{p-\tilde{p}}\tilde{U}_k^{\tilde{p}-1}\tilde{U}_l|\rho| \,dx
 &= \int_{\R^N}  \tilde{U}_k^{\tilde{p}-1}\tilde{U}_l^{p-\tilde{p}+1}|\rho| \,dx\lesssim   \|\tilde{U}_k^{\tilde{p}-1}\tilde{U}_l^{p-\tilde{p}+1}\|_{L^{\frac{p+1}{p}}}\|\nabla \rho\|_{L^{2}}.
\endaligned$$
We next use Lemma \ref{prop:interaction_approx} to discuss $$ \|\tilde{U}_k^{\tilde{p}-1}\tilde{U}_l^{p-\tilde{p}+1}\|_{L^{\frac{p+1}{p}}} = \Big(\int_{\R^N}\tilde{U}_k^{\frac{
(\tilde{p}-1)(p+1)}{p}}
\tilde{U}_l^{\frac{(p-\tilde{p}+1)(p+1)}{p}} \,dx\Big)^{\frac{p}{p+1}}.$$
Recall that we have imposed  the constrains  $ 3\leq N\leq5$ and $\frac{N+2}{2}< \mu<\min(N,4)$. Notice $\frac{N+2}{2}>\frac{6-N}{2}$ and $\mu>\frac{6-N}{2}$ which is equivalent to $\frac{(\tilde{p}-1)(p+1)}{p}<\frac{(p-\tilde{p}+1)(p+1)}{p}$. By Lemma \ref{prop:interaction_approx} and Lemma \ref{prop:interaction_and_coef}, we have that
 $$
   \aligned  &\  \|\tilde{U}_k^{\tilde{p}-1}\tilde{U}_l^{p-\tilde{p}+1}\|_{L^{\frac{p+1}{p}}}  \approx\ \Big(Q^{\frac{N-2}{2}\frac{(\tilde{p}-1)(p+1)}{p}}\Big)^{\frac{p}{p+1}}
 \approx  \Big(\int_{\R^N}\tilde{U}_k^p\tilde{U}_l\,dx\Big)^{\tilde{p}-1}\\
 &  \lesssim    \Big(\hat  \varepsilon\| \nabla \rho\|_{L^{2}}
   +\| \nabla \rho\|_{L^{2}}^{\min(\tilde{p}, 2)} +\Theta(u) \Big)^{\tilde{p}-1} \lesssim  \hat  \varepsilon\| \nabla \rho\|_{L^{2}}
   +o(1)\| \nabla \rho\|_{L^{2}} +\Theta(u),
   \endaligned
 $$
where the last inequality holds  whenever   $\tilde{p}-1\geq 1$ and $\tilde{p}>1$, namely $ \mu\leq6-N$. Note that $6-N\leq \min(N,4)$. However,  only when $N=3$, there holds $ \frac{N+2}{2}<6-N$, i.e.,  the set $(0,6-N]\cap(\frac{N+2}{2},\min(N,4))$ is nonempty. Therefore, if $N=3$ and $\frac{5}{2}<\mu<3$, we can get  the following expected estimate:
$$ \int_{\R^N}  \tilde{U}_i^{p-\tilde{p}}\tilde{U}_k^{\tilde{p}-1}\tilde{U}_l|\rho| \,dx \lesssim\hat  \varepsilon\| \nabla \rho\|_{L^{2}}^{2} +o(1)\| \nabla \rho\|_{L^{2}}^{2} +\Theta(u)\| \nabla \rho\|_{L^{2}};$$

\noindent $\bullet$ further, if $i\neq k$ and $i\neq l$, then
$$\aligned \int_{\R^N}  \tilde{U}_i^{p-\tilde{p}}\tilde{U}_k^{\tilde{p}-1}\tilde{U}_l|\rho| \,dx
 &\leq  \|  \tilde{U}_i^{p-\tilde{p}}\tilde{U}_k^{\tilde{p}-1} \|_{L^{\frac{p+1}{p-1}}}
\| \tilde{U}_l \|_{L^{p+1}}
\|\nabla\rho\|_{L^{2}}.\endaligned$$
Since we have imposed the constrains $N=3$ and $\frac{5}{2}<\mu<3$, which implies that   $\mu >2 \iff p-\tilde{p}>\tilde{p}-1 $. Therefore, by Lemma \ref{prop:interaction_approx} and Lemma \ref{prop:interaction_and_coef}, we have that
 $$
    \aligned   &  \|  \tilde{U}_i^{p-\tilde{p}}\tilde{U}_k^{\tilde{p}-1} \|_{L^{\frac{p+1}{p-1}}}  \approx \ \Big( Q^{\frac{N-2}{2}(\tilde{p}-1)\frac{p+1}{p-1} } \Big)^{\frac{p-1}{p+1}} \approx  \Big(\int_{\R^N}\tilde{U}_i^p\tilde{U}_k\,dx\Big)^{\tilde{p}-1}\\
  & \lesssim       \Big(\hat  \varepsilon\| \nabla \rho\|_{L^{2}}
   +\| \nabla \rho\|_{L^{2}}^{\min(\tilde{p}, 2)} +\Theta(u) \Big)^{\tilde{p}-1} \lesssim  \hat  \varepsilon\| \nabla \rho\|_{L^{2}}
   +o(1)\| \nabla \rho\|_{L^{2}} +\Theta(u),
    \endaligned
 $$
where the last inequality holds  since  $ \mu<3$ implies  that $\tilde{p}-1\geq 1$ and $\tilde{p}>1$. Therefore,  if $N=3$ and $\frac{5}{2}<\mu<3$, we can also  get
$$ \int_{\R^N}  \tilde{U}_i^{p-\tilde{p}}\tilde{U}_k^{\tilde{p}-1}\tilde{U}_l|\rho| \,dx \lesssim\hat  \varepsilon\| \nabla \rho\|_{L^{2}}^{2} +o(1)\| \nabla \rho\|_{L^{2}}^{2} +\Theta(u)\| \nabla \rho\|_{L^{2}}. $$
Summing up the above  three cases, if $N=3$ and $5/2<\mu<3$, we  get  that
\begin{equation}\label{eq:4.47}
\aligned
    \int_{\R^N}  \tilde{U}_i^{p-\tilde{p}}\tilde{U}_k^{\tilde{p}-1}\tilde{U}_l|\rho| \,dx \lesssim  \hat{\varepsilon}\| \nabla \rho\|_{L^{2}}^{2}+o(1)\| \nabla \rho\|_{L^{2}}^{2}+\Theta(u)\| \nabla \rho\|_{L^{2}}.
    \endaligned
\end{equation}
It follows by \eqref{eq:4.45}, \eqref{eq:4.46}  and \eqref{eq:4.47} that
\begin{equation}\label{eq:4.48}
\aligned
    \mathcal{H}_{18,1}^{(i)} \lesssim  \hat{\varepsilon}\| \nabla \rho\|_{L^{2}}^{2}+o(1)\| \nabla \rho\|_{L^{2}}^{2}+\Theta(u)\| \nabla \rho\|_{L^{2}}.
    \endaligned
\end{equation}
On the other hand, using relation \eqref{eq:2.3},  recalling $\sigma=\sum\limits_{i=1}^\nu \alpha_i\tilde{U}_i$ and applying Lemma \ref{prop:interaction_approx}, the
  H\"older and Sobolev inequalities, we obtain that
  \begin{equation}\label{eq:4.49}
  \aligned  \mathcal{H}_{18,2}^{(i)}   &= \int_{\R^N}\Big(I_{\mu}\ast\tilde{U}_i^{\tilde{p}+1}\Big)
  \sigma^{\tilde{p}-1}\rho^2 \,dx
   \approx \int_{\R^N}\tilde{U}_i^{p-\tilde{p}}\Big( \alpha_i \tilde{U}_i +\sum\limits_{j\neq i} \alpha_j \tilde{U}_j\Big)^{\tilde{p}-1}\rho^2 \,dx\\
& \lesssim \alpha_i^{\tilde{p}-1}
\int_{\R^N}\tilde{U}_i^{p-1}\rho^2 \,dx+\sum\limits_{j\neq i} \alpha_j^{\tilde{p}-1}
\int_{\R^N}\tilde{U}_i^{p-\tilde{p}}
\tilde{U}_j^{\tilde{p}-1}\rho^2 \,dx\\
 & \lesssim  \int_{\R^N}\Big(I_{\mu}\ast\tilde{U}_i^{\tilde{p}+1}\Big) \tilde{U}_i^{\tilde{p}-1}\rho^2 \,dx
  +\Big(\int_{\R^N}\tilde{U}_i^{
\frac{(p-\tilde{p})(p+1)}{p-1}}  \tilde{U}_j^{\frac{(\tilde{p}-1)(p+1)}{p-1}} \Big)^{\frac{p-1}{p+1}}\| \nabla\rho\|_{L^2}^2\\
 &\lesssim o(1) \|\nabla\rho\|_{L^2}^2,\endaligned
 \end{equation}
 where the last inequality holds  since  $\mu<4 (\iff \tilde{p}-1>0)$.  Therefore,
 $$
 \int_{\R^N}\tilde{U}_i^{
\frac{(p-\tilde{p})(p+1)}{p-1}}  \tilde{U}_j^{\frac{(\tilde{p}-1)(p+1)}{p-1}}\approx \begin{cases}
     Q^{\frac{N-2}{2}\min(\frac{(p-\tilde{p})(p+1)}{p-1},\frac{(\tilde{p}-1)(p+1)}{p-1})}, &\text{if } p-\tilde{p}\neq \tilde{p}-1\\
     Q^{\frac{N}{2}} \log(\frac{1}{Q}), &\text{if } p-\tilde{p}=\tilde{p}-1
     \end{cases}
     \ \approx \  o(1).
 $$
By using a similar process  as in the proof of  \eqref{eq:4.16},  we have that  $$\int_{\R^N}\Big(I_{\mu}\ast\tilde{U}_i^{\tilde{p}+1}\Big) \tilde{U}_i^{\tilde{p}-1}\rho^2 \,dx\lesssim  o(\|\nabla\rho\|_{L^2}^2).$$
 Therefore, combining \eqref{eq:4.44}, \eqref{eq:4.48}  and \eqref{eq:4.49}, we obtain that if $N=3$ and $5/2<\mu<3$, then
 \begin{equation}\label{eq:4.50}
 \mathcal{H} _{18}\lesssim \hat\varepsilon
 \| \nabla \rho\|_{L^{2}}^{2}
  +o(1)\| \nabla \rho\|_{L^{2}}^2 + \Theta(u)\| \nabla \rho\|_{L^{2}}.
 \end{equation}
 \vskip0.08in
  With these inequalities on hand, now we proof \eqref{eq:1.18} and \eqref{eq:1.19}.
  Firstly, it follows from  \eqref{eq:4.4}, \eqref{eq:4.7},  \eqref{eq:4.11}, \eqref{eq:4.12}, \eqref{eq:4.43} and $\eqref{eq:4.50}$ that, if $N=3$ and $5/2<\mu<3$,
  $$ \|\nabla\rho\|_{L^{2}}^{2} \lesssim \mathcal{H} _{1}+\cdots+\mathcal{H} _{19}+\Theta(u)\|\nabla\rho\|_{L^{2}}\lesssim \big(\varepsilon_{1}+\hat{\varepsilon}(4+C(\varepsilon_{1}))
+o(1)\big)\|\nabla\rho\|_{L^{2}}^{2}
 +\Theta(u)\|\nabla\rho\|_{L^{2}},$$
 i.e., there exists a constant $\bar C(N,\mu,\nu)>0$, such that
 $$ \|\nabla\rho\|_{L^{2}}^{2} \leq \bar C(N,\mu,\nu) \big(\varepsilon_{1}+\hat{\varepsilon}(4+C(\varepsilon_{1}))
+o(1)\big)\|\nabla\rho\|_{L^{2}}^{2}
 +\bar C(N,\mu,\nu)\Theta(u)\|\nabla\rho\|_{L^{2}}.$$
We now f{}ix $\varepsilon_{1}\ll1$ such that $\bar C(N,\mu,\nu)\varepsilon_{1}\leq 1/2$, then $C(\varepsilon_{1})$ is a f{}ixed constant and we get that
  $$ \|\nabla\rho\|_{L^{2}}^{2} \leq  2\bar C(N,\mu,\nu)\big(\hat{\varepsilon}(4+C(\varepsilon_{1}))
+o(1)\big)\|\nabla\rho\|_{L^{2}}^{2}
 +2\bar C(N,\mu,\nu)\Theta(u)\|\nabla\rho\|_{L^{2}}.$$
Then  we take a f{}ixed $\hat \varepsilon\ll 1$ such that $2\bar C(N,\mu,\nu)\hat{\varepsilon}(4+C(\varepsilon_{1}))\leq 1/2$. We obtain that
  $$ \|\nabla\rho\|_{L^{2}}^{2} \leq  4\bar C(N,\mu,\nu)o(1)\|\nabla\rho\|_{L^{2}}^{2}
 +4\bar C(N,\mu,\nu)\Theta(u)\|\nabla\rho\|_{L^{2}},$$
By Lemma \ref{prop:interaction_and_coef}, there exists $\delta_{0}=\delta_{0}(N,\mu,\nu,\hat{\varepsilon})$ such that the estimates \eqref{eq:3.11} and \eqref{eq:3.12} are applicable.  Then  we may take $\delta\leq \delta_{0}$ suf{}f{}iciently  small to guarantee that $4\bar C(N,\mu,\nu)o(1)\leq 1/2$ and  $\|\nabla\rho\|_{L^{2}}^{2}$ is suf{}f{}iciently small. Therefore, we get that
  \begin{equation}\label{eq:4.51}
     \|\nabla\rho\|_{L^{2}} \leq C(N,\mu,\nu)\Theta(u),
  \end{equation}
  where $C(N,\mu,\nu)\coloneqq 8 \bar C(N,\mu,\nu) >0$.
  Lastly, going back to \eqref{eq:3.11} and \eqref{eq:3.12}, if $N=3$ and $5/2<\mu<3$, by using \eqref{eq:4.51} and  the assumption $\Theta(u)\leq 1$ in \eqref{eq:4.1}, we have that
  \begin{equation}\label{eq:4.52}
      |\alpha_i-1|\lesssim \hat\varepsilon\|\nabla\rho\|_{L^2}  +\|\nabla \rho\|_{L^{2}}^{2 }
          + \Theta(u)\lesssim \Theta(u),
  \end{equation}
    and that
$$\int_{\R^N}
 \tilde{U}_i^{\tilde{p}}\tilde{U}_j\,dx \lesssim \hat\varepsilon\|\nabla\rho\|_{L^2}  +\|\nabla \rho\|_{L^{2}}^{2}+\Theta(u)\lesssim \Theta(u),
 $$
 that is \eqref{eq:1.19} of Theorem \ref{thm:main_close}.
 Further, we let $\sigma^{\prime}\coloneqq\sum\limits_{i=1}^{\nu}\tilde{U}_{i}$, then it follows from \eqref{eq:4.51} and \eqref{eq:4.52} that
 $$
 \aligned
 \Big\|\nabla u-\sum_{i=1}^{\nu}\nabla \tilde{U}_i\Big\|_{L^2}&=  \|\nabla u-\nabla\sigma ^{\prime}\|_{L^2}=\|\nabla u-\nabla\sigma \|_{L^2}+ \|\nabla\sigma-\nabla \sigma^{\prime} \|_{L^2}\\
 &\lesssim \|\nabla \rho \|_{L^2}+\sum_{i=1}^{\nu}|\alpha_i-1|\lesssim \Theta(u),
 \endaligned
 $$
 then we obtain \eqref{eq:1.18} of Theorem \ref{thm:main_close}. We f{}inally f{}inish the proof of Theorem \ref{thm:main_close}. $\Box$

 \end{document}